\numberwithin{equation}{section}
\newtheorem{theorem}{Theorem}%[section]
\newtheorem{corollary}[theorem]{Corollary}
\newtheorem{lemma}[theorem]{Lemma}
\theoremstyle{definition}
\newtheorem*{definition}{Definition}
\def \deg {{\rm deg}}
\def \D {{\mathcal{D}}}
\def \P {{\mathcal {P}}}
\def \a {\alpha}
\def \b {\beta}
\def \T {\tau}
\def \mod {\ ({\rm mod}\,\,}
\def \deg {{\rm deg}}
\def \N {{\rm Nbd}}
\def \binom#1#2{{#1\choose#2}}
\def \mod#1{{\:({\rm mod}\ #1)}}
\let\oldproofname=\proofname
\renewcommand{\proofname}{\rm\bf{\oldproofname}}
\title{\bf Doyen-Wilson results for odd length cycle systems}
\author{
Daniel Horsley and Rosalind A. Hoyte\\
School of Mathematical Sciences \\
Monash University \\
Vic 3800, Australia \\[0.1cm]
\texttt{danhorsley@gmail.com}, \texttt{rosalind.hoyte@monash.edu}}
\date{}
\begin{document}
\maketitle
\def\baselinestretch{1.2}\small\normalsize
\sloppy

\begin{abstract}
For each odd $m \geq 3$ we completely solve the problem of when an $m$-cycle system of order $u$ can be embedded in an $m$-cycle system of order $v$, barring a finite number of possible exceptions. In cases where $u$ is large compared to $m$, where $m$ is a prime power, or where $m \leq 15$, the problem is completely resolved. In other cases, the only possible exceptions occur when $v-u$ is small compared to $m$. This result is proved as a consequence of a more general result which gives necessary and sufficient conditions for the existence of an $m$-cycle decomposition of a complete graph of order $v$ with a hole of size $u$ in the case where $u \geq m-2$ and $v-u \geq m+1$ both hold.
\end{abstract}

\section{Introduction}

An \emph{$m$-cycle decomposition} of a graph $G$ is a collection of cycles of length $m$ in $G$ whose edge sets form a partition of the edge set of $G$. An \emph{$m$-cycle system of order $v$} is an $m$-cycle decomposition of the complete graph of order $v$. Cycle systems of order one exist trivially. Building on work of Hoffman, Lindner and Rodger \cite{HofLinRod89}, Alspach and Gavlas \cite{AlsGav01} and \v{S}ajna \cite{Sajna02} established that the obvious necessary conditions for the existence of an $m$-cycle system of order $v$ were also sufficient.

\begin{theorem}[\cite{AlsGav01,Sajna02}]\label{CSExist}
Let $m \geq 3$ and $v > 1$ be integers. There exists an $m$-cycle system of order $v$ if and only if $v$ is odd, $v \geq m$, and $\binom{v}{2} \equiv 0 \mod{m}$.
\end{theorem}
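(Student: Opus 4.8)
The necessity of the three conditions is immediate. In any $m$-cycle decomposition of $K_v$ each vertex is incident with an even number of edges, so its degree $v-1$ must be even and hence $v$ must be odd; an $m$-cycle requires $m$ distinct vertices, forcing $v \geq m$; and since the $m$-cycles partition the $\binom{v}{2}$ edges into classes of size $m$, we need $m \mid \binom{v}{2}$. The substance of the theorem is the sufficiency of these conditions, which I would attack as follows.

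First I would note that, for fixed $m$, the admissible orders fall into a bounded number of residue classes modulo $2m$: the conditions ``$v$ odd'' and ``$m \mid \binom{v}{2}$'' are both periodic in $v$ with period $2m$, since $\binom{v+2m}{2}-\binom{v}{2}=m(2v+2m-1)$. Within each class the admissible values form an arithmetic progression $v_0, v_0+2m, v_0+4m, \dots$, so it suffices to (i) construct a smallest example of order $v_0$ in each class and (ii) give a recursive construction producing a system of order $v+2m$ from one of order $v$. When $m$ is prime the only classes are $v \equiv 1$ and $v \equiv m \mod{2m}$, with smallest examples $v=1$ (trivial) and $v=m$; composite $m$ yields more classes but the plan is the same.

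For the base cases I would use difference methods in $\Z_v$. Taking the vertex set to be $\Z_v$, the graph $K_v$ is the circulant whose edges split into the difference classes $\{1,2,\dots,(v-1)/2\}$. I would seek a family of base $m$-cycles whose edge differences partition these classes, each difference occurring the correct number of times, and then develop the base cycles cyclically; this reduces the construction to exhibiting cyclic or short-orbit $m$-cycles with a prescribed difference pattern, which can be done explicitly via Walecki-type zig-zag cycles and Skolem or Langford type sequences. In particular $K_m$ decomposes into $(m-1)/2$ Hamilton cycles, settling the class containing $v=m$.

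For the recursive step I would adjoin a set $W$ of $2m$ new vertices to an existing system on a set $V$ of $v$ vertices and decompose the remaining graph $G$, namely the complete graph on $W$ together with all edges joining $V$ and $W$, equivalently $K_{v+2m}$ with a hole of size $v$, into $m$-cycles. Here the parity of the cycle length is decisive: when $m$ is even one may fill the bipartite part between $V$ and $W$ using known decompositions of complete bipartite graphs, but for odd $m$ that bipartite graph admits no odd-cycle decomposition on its own, so each $m$-cycle must weave between the edges inside $W$ and the edges joining $W$ to $V$. Designing these mixed cycles so that every edge of $G$ is covered exactly once is, I expect, the main obstacle; it is an instance of the complete-graph-with-a-hole problem studied in full generality in the present paper. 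The remaining difficulties are the awkward residue classes arising when $m$ is composite, and the small orders $v$ near $m$, where the cyclic constructions degenerate and must be replaced by ad hoc decompositions.
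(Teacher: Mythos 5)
The paper does not prove this theorem; it imports it from \cite{AlsGav01,Sajna02}, so there is no internal proof to compare against. Your necessity argument is correct and standard. The sufficiency plan, however, contains a fatal flaw in the recursive step, and this is not merely a technical obstacle you could expect to overcome with cleverer cycle designs.

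Your recursion asks for an $m$-cycle decomposition of $K_{v+2m}-K_v$ for every admissible $v$. For odd $m$ such a decomposition simply does not exist once $v$ is large: an odd cycle cannot lie entirely inside the bipartite part between the old vertices and the $2m$ new ones, so every $m$-cycle of the putative decomposition must use at least one of the $\binom{2m}{2}=m(2m-1)$ edges inside the new set, while the total number of cycles is $\frac{1}{m}\bigl(\binom{2m}{2}+2mv\bigr)=(2m-1)+2v$. Hence you would need $(2m-1)+2v\leq m(2m-1)$, i.e.\ $v\leq\frac{(m-1)(2m-1)}{2}$. This is exactly the obstruction recorded as condition (N3) in Lemma \ref{NecConds} of this paper (with $u=v$, $v=v+2m$): the hole grows linearly but the number of new vertices you are adding stays fixed at $2m$, whereas (N3) forces the complement of the hole to have size at least roughly $\frac{2v}{m-1}$. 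So for all $v$ beyond a bound quadratic in $m$ the inductive step is vacuous and the argument collapses; no amount of weaving cycles between the pure and cross edges can fix this. Any workable recursion must either grow the added set proportionally to $v$ (which destroys the ``period $2m$'' structure you are relying on) or proceed by an entirely different decomposition of $K_{v}$, e.g.\ into joins of several complete parts, which is in fact what the cited proofs of Alspach--Gavlas and \v{S}ajna do; their arguments are long, case-heavy, and do not follow the base-case-plus-$2m$-step template. The base-case portion of your sketch (difference methods, Walecki decompositions) is plausible but is also only a program, not a proof.
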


An $m$-cycle system $\mathcal{A}$ is said to be embedded in another $m$-cycle system $\mathcal{B}$ when $\mathcal{A} \subseteq \mathcal{B}$. Every $m$-cycle system can be trivially embedded in itself. The problem of determining when an $m$-cycle system of order $u$ can be embedded in an $m$-cycle system of order $v$ has been well studied, although it remains open. The problem takes on quite a different complexion depending on whether $m$ is odd or even. Our focus here will be on the case where $m$ is odd. Famously, the problem was solved in the case of $3$-cycles by Doyen and Wilson \cite{DoyWil73}. Subsequently, this result was extended to the case of $m$-cycles for $m=5$ \cite{BryRod94_5}, then to $m \in \{7,9\}$ \cite{BryRod94_m}, and finally to $m \in \{11,13\}$ \cite{BryRodSpi97}. Also, in \cite{BryRod94_m} the problem was solved in the case where $u,v \equiv 1 \mbox{ or } m \mod{2m}$, barring at most one exception for each $m$ and $u$.

Here we completely solve the problem in the case when $u > \frac{(m-1)(m-2)}{2}$ and, for other values of $m$ and $u$, we solve it apart from cases where $u < v \leq u+m-1$. This means that the problem is solved for each $m$ with the exception of finitely many possible cases. We completely solve the problem in the case where $m$ is a prime power by resolving the possible exceptions in a result in \cite{BryRod94_m}.

\begin{theorem}\label{DWTheorem}
Let $m\geq 3$ be an odd integer and let $u$ and $v$ be positive integers with $u<v$.
\begin{itemize}
    \item[(i)]
If $u > \frac{(m-1)(m-2)}{2}$ or if $m$ is a prime power, then an $m$-cycle system of order $u$ can be embedded in an $m$-cycle system of order $v$ if and only if $u$ and $v$ are odd, $\binom{u}{2},\binom{v}{2} \equiv 0 \mod{m}$ and $v \geq \frac{u(m+1)}{m-1}+1$.
    \item[(ii)]
If $u \leq \frac{(m-1)(m-2)}{2}$ and $m$ is not a prime power, then an $m$-cycle system of order $u$ can be embedded in an $m$-cycle system of order $v$ if and only if $u$ and $v$ are odd, $\binom{u}{2},\binom{v}{2} \equiv 0 \mod{m}$ and $v \geq \frac{u(m+1)}{m-1}+1$, except that the embedding may not exist when $\frac{u(m+1)}{m-1}+1 \leq v \leq u+m-1$.
\end{itemize}
\end{theorem}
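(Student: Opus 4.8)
The plan is to split the statement into an easy necessity half and a substantial sufficiency half, the latter resting on a general decomposition theorem. For necessity, oddness of $u$ and $v$ and the two divisibility conditions are immediate from Theorem~\ref{CSExist}, applied to the embedded system of order $u$ and to the ambient system of order $v$. For the inequality I would reformulate the problem: embedding an $m$-cycle system of order $u$ in one of order $v$ is the same as choosing a $u$-subset $U$ of the $v$ vertices, decomposing the edges within $U$ as an $m$-cycle system, and decomposing the remaining edges---the ``hole graph'' $K_v$ with $U$ made independent---into $m$-cycles. Because $m$ is odd, an independent set in a single $m$-cycle has at most $\frac{m-1}{2}$ vertices, so each cycle of the hole decomposition meets $U$ in at most $\frac{m-1}{2}$ vertices and hence uses at most $m-1$ of the $u(v-u)$ edges meeting $U$. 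Since the hole graph has $\binom{v}{2}-\binom{u}{2}$ edges and so $\frac1m\left(\binom{v}{2}-\binom{u}{2}\right)$ cycles, this yields $u(v-u)\le\frac{m-1}{m}\left(\binom{v}{2}-\binom{u}{2}\right)$, which rearranges to $v\ge\frac{u(m+1)}{m-1}+1$.

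For sufficiency the reduction above is decisive: under the stated conditions Theorem~\ref{CSExist} supplies the required system on $U$, so everything comes down to decomposing the hole graph into $m$-cycles. The engine would be a separate, more general result asserting that whenever $u\ge m-2$ and $v-u\ge m+1$ the hole graph admits an $m$-cycle decomposition exactly when the arithmetic conditions hold. I expect proving this general result to be the main obstacle and the bulk of the work: it calls for an explicit construction valid across all residues of $u$ and $v$ modulo $2m$, presumably assembled from rotational base cycles developed over a cyclic group on the $v-u$ outside vertices, cycle decompositions of the bipartite part joining $U$ to the outside, and a bounded ``patch'' absorbing the leftover edges. The hardest regime is $v$ near the extremal value $\frac{u(m+1)}{m-1}+1$, where almost every cycle must meet $U$ in the full $\frac{m-1}{2}$ vertices and the construction has essentially no slack.

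Finally I would assemble the theorem from this general result. Any instance with $u>1$ has $u\ge m$ by Theorem~\ref{CSExist}, so $u\ge m-2$ is automatic, and the only conditions outside the scope of the general result are those with $v-u\le m-1$, that is, $u<v\le u+m-1$. When $u>\frac{(m-1)(m-2)}{2}$ the necessary inequality already forces $\frac{u(m+1)}{m-1}+1>u+m-1$, so no such small-$v$ case survives and the general result settles part~(i) in this range. For the remaining small-$v$ cases I would exploit that, when $m$ is a prime power, $m\mid\binom{u}{2}$ and $m\mid\binom{v}{2}$ together with $u,v$ odd force $u,v\equiv 1 \mbox{ or } m \mod{2m}$; these are precisely the residues treated in \cite{BryRod94_m} up to at most one exception per pair, and resolving those finitely many potential exceptions by direct construction completes part~(i) for prime-power $m$. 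The cases left untreated---$m$ not a prime power, $u\le\frac{(m-1)(m-2)}{2}$, and $v\le u+m-1$---are exactly the range quarantined as possible exceptions in part~(ii).
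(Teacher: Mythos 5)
Your proposal follows the same architecture as the paper's proof: necessity from Theorem~\ref{CSExist} together with the cross-edge count (your counting argument is exactly the paper's justification of (N3) in Lemma~\ref{NecConds}), and sufficiency by reducing to the general hole-decomposition result (Theorem~\ref{MainTheorem}) plus the result of \cite{BryRod94_m} (Theorem~\ref{DWExceptions}) for prime powers; your observations that $u>1$ forces $u\geq m$, that $v-u$ is even so the range outside the general result is exactly $v-u\leq m-1$, and that $u>\frac{(m-1)(m-2)}{2}$ makes the inequality (N3) force $v-u\geq m+1$ are precisely the ones the paper uses. The one place you diverge is the disposal of the exceptional cases of Theorem~\ref{DWExceptions}. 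For a fixed $m$ these are not finitely many (there is one candidate $v$ for each $u\equiv m\pmod{2m}$), so ``resolving those finitely many potential exceptions by direct construction'' is not a viable plan as stated. The paper's Lemma~\ref{DWprimes} instead observes that every exceptional pair has $u\equiv v\equiv m\pmod{2m}$, hence $u\geq m$ and $v-u\geq 2m\geq m+1$, so all of them fall back into the range covered by Theorem~\ref{MainTheorem} and no constructions are needed. The same observation rescues your version: in the small-$v$ regime $v\leq u+m-1$ where you invoke Theorem~\ref{DWExceptions}, the exceptional congruence forces $v-u\geq 2m$, so the exceptional set is empty there --- but you should say this rather than promise constructions. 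Finally, your sketch of how the general hole-decomposition theorem would be proved (rotational base cycles) differs from the paper's cycle-switching method, but that theorem is a separate result and its proof is not part of the proof of this statement.
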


In the above it is easy to see that the conditions that $u$ and $v$ are odd and $\binom{u}{2},\binom{v}{2} \equiv 0 \mod{m}$ are necessary for the embedding to exist. The condition $v \geq \frac{u(m+1)}{m-1}+1$ can also be seen to be necessary by observing that each $m$-cycle which is not part of the original system must contain two consecutive vertices not in the original system (see Lemma \ref{NecConds} below).

In fact, we prove a result more general than Theorem \ref{DWTheorem} concerning $m$-cycle decompositions of complete graphs with holes. For positive integers $u$ and $v$ with $u<v$, the \emph{complete graph of order $v$ with a hole of size $u$}, denoted $K_v-K_u$, is the graph obtained from a complete graph of order $v$ by removing the edges of a complete subgraph of order $u$. Any embedding of an $m$-cycle system of order $u$ in another of order $v$ yields an $m$-cycle decomposition of $K_v-K_u$ (via removing the cycles in the original system), but the problem of finding $m$-cycle decompositions of complete graphs with holes is more general because the orders of the graph and hole need not be feasible orders for $m$-cycle systems. This more general problem has also received significant attention. For odd $m$, the problem is completely solved for $m=3$ \cite{MenRos83}, $m=5$ \cite{BryHofRod96}, and $m=7$ \cite{BryRodSpi97}. For a survey of results concerning cycle decompositions, see \cite{BryRod07}.
The following lemma from \cite{BryRodSpi97} gives well-known necessary conditions for the existence of an $m$-cycle decomposition of $K_v-K_u$.

\begin{lemma}[\cite{BryRodSpi97}]\label{NecConds}
Let $m \geq 3$ be an odd integer and let $u$ and $v$ be positive integers such that $v>u$. If there exists an $m$-cycle decomposition of $K_v-K_u$, then
\begin{itemize}
    \item[$(\rm{N}1)$]
$u$ and $v$ are odd;
    \item[$(\rm{N}2)$]
$\binom{v}{2}-\binom{u}{2} \equiv 0 \mod{m}$;
    \item[$(\rm{N}3)$]
$v \geq \frac{u(m+1)}{m-1}+1$; and
    \item[$(\rm{N}4)$]
$(v-m)(v-1) \geq u(u-1)$.
\end{itemize}
\end{lemma}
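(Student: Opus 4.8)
The plan is to verify the four conditions one at a time using elementary degree, edge-count, and parity arguments on the graph $K_v-K_u$. Throughout, call the $u$ vertices spanned by the removed $K_u$ the \emph{hole} vertices and the remaining $v-u$ vertices the \emph{outside} vertices; the edges of $K_v-K_u$ are exactly the outside--outside edges and the outside--hole edges, while no edge joins two hole vertices. A first observation is that each outside vertex has degree $v-1$ and each hole vertex has degree $v-u$. Since in any cycle decomposition every vertex must have even degree, condition (N1) is immediate: the degree $v-1$ of an outside vertex forces $v$ to be odd, and then the degree $v-u$ of a hole vertex forces $u$ to be odd as well. Condition (N2) is equally quick: the graph has $\binom{v}{2}-\binom{u}{2}$ edges, each $m$-cycle accounts for exactly $m$ of them, and the cycles partition the edge set, so $m$ divides $\binom{v}{2}-\binom{u}{2}$.

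For (N3) I would use the fact that $m$ is odd. Colour each hole vertex black and each outside vertex white. In any $m$-cycle of the decomposition the colours cannot alternate around the cycle, because an odd cycle is not bipartite; hence some edge of the cycle joins two vertices of the same colour. As no edge joins two hole vertices, this monochromatic edge must join two outside vertices. Thus every cycle contains at least one outside--outside edge. Since all $\binom{v-u}{2}$ outside--outside edges of the graph are partitioned among the $\frac{1}{m}\bigl(\binom{v}{2}-\binom{u}{2}\bigr)$ cycles, at least one per cycle, we obtain $\binom{v-u}{2} \ge \frac{1}{m}\bigl(\binom{v}{2}-\binom{u}{2}\bigr)$, and a short computation (dividing through by the common factor $v-u$) rearranges this to the stated bound $v \ge \frac{u(m+1)}{m-1}+1$.

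For (N4) I would focus on a single outside vertex $z$, which has the maximum degree $v-1$. Because the cycles through $z$ are pairwise edge-disjoint and each uses exactly two edges at $z$, there are exactly $\frac{v-1}{2}$ of them (here $v-1$ is even by (N1)), and together they contain $\frac{m(v-1)}{2}$ distinct edges, all lying in $K_v-K_u$. Hence $\binom{v}{2}-\binom{u}{2} \ge \frac{m(v-1)}{2}$, which on clearing denominators becomes $v(v-1)-u(u-1)\ge m(v-1)$, i.e.\ $(v-m)(v-1)\ge u(u-1)$, as required.

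The arguments are short, so the main obstacle is conceptual rather than computational: one must hit on the right object to count in each case. The two substantive ideas are the use of non-bipartiteness of odd cycles to force an outside--outside edge in every cycle (for (N3)), and the restriction of attention to a vertex of largest degree together with the edge-disjointness of the cycles through it (for (N4)); once these are in hand, matching the resulting inequalities to the precise stated forms is only a matter of careful but routine algebra.
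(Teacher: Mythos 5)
Your proof is correct and follows essentially the same reasoning the paper indicates: the paper cites \cite{BryRodSpi97} for this lemma and only sketches the arguments, noting that (N3) follows because each $m$-cycle must contain two consecutive vertices outside the hole (your non-bipartiteness argument) and that (N4) follows because the $\frac{v-1}{2}$ cycles through an outside vertex force $K_v-K_u$ to have at least $\frac{m(v-1)}{2}$ edges. Your write-up simply supplies the routine degree, edge-count, and algebraic details of these same arguments, and all of them check out.
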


The necessity of (N3) follows from the fact that each $m$-cycle must contain two consecutive vertices outside the hole. The necessity of (N4) follows from the fact that $\frac{v-1}{2}$ cycles of the decomposition contain some vertex outside the hole, and hence $K_v-K_u$ must have at least $\frac{m(v-1)}{2}$ edges. Condition (N4) implies that $v \geq u+\frac{m}{2}$. It will be important for our purposes that (N4) is always satisfied if $v \geq u+m-1$. For an odd integer $m \geq 3$, we say that a pair $(u,v)$ of positive integers is
\emph{$m$-admissible} if $u$ and $v$ satisfy conditions (N1)--(N4). Our main result shows that these necessary conditions are sufficient whenever $u \geq m-2$ and $v-u \geq m+1$.

\begin{theorem}\label{MainTheorem}
Let $m\geq 3$ be an odd integer and let $u$ and $v$ be integers such that $u \geq m-2$ and $v-u \geq m+1$. There exists an $m$-cycle decomposition of $K_v-K_u$ if and only if
\begin{itemize}
    \item[(i)]
$u$ and $v$ are odd;
    \item[(ii)]
$\binom{v}{2}-\binom{u}{2} \equiv 0 \mod{m}$; and
    \item[(iii)]
$v \geq \frac{u(m+1)}{m-1}+1$.
\end{itemize}
\end{theorem}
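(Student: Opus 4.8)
\noindent\emph{Proof plan.}
The necessity of (i)--(iii) is exactly Lemma~\ref{NecConds}, so I would concentrate entirely on sufficiency. Write $w=v-u$ and regard $K_v-K_u$ as the edge-disjoint union of the clique $K_w$ on the outside vertex set $O$ and the complete bipartite graph $K_{u,w}$ joining $O$ to the hole $H$. Since $u$ and $v$ are odd, $w$ is even and every vertex of $K_v-K_u$ has even degree, condition (ii) makes $N:=\bigl(\binom{v}{2}-\binom{u}{2}\bigr)/m$ an integer, and by hypothesis $w\ge m+1$. The starting point is an edge-accounting. Because $H$ is independent, the hole vertices lying on any $m$-cycle form an independent set of that cycle, so a cycle meeting $H$ in exactly $b$ vertices uses precisely $2b$ edges of $K_{u,w}$ and $m-2b$ edges of $K_w$, where $0\le b\le\frac{m-1}{2}$. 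Summing over the $N$ cycles, a decomposition amounts to a choice of hole-counts $b_1,\dots,b_N\in\{0,\dots,\frac{m-1}{2}\}$ with $\sum_i b_i=\tfrac12 uw$ (matching the $uw$ bipartite edges), after which the $\binom{w}{2}$ clique edges are automatically accounted for since $Nm$ equals the total number of edges.

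The first thing I would establish is that such a family of hole-counts exists \emph{if and only if} (iii) holds. As $uw$ is even, $\tfrac12 uw$ is an integer, and the only binding constraint is $\tfrac12 uw\le N\cdot\frac{m-1}{2}$; a short computation shows this inequality is equivalent to $v\ge\frac{u(m+1)}{m-1}+1$. Conceptually, (iii) is exactly the statement that in the extreme strategy using only ``cheap'' cycles with $b=\frac{m-1}{2}$ (each consuming a single clique edge together with $m-1$ bipartite edges) and pure cycles inside $K_w$, the number of clique edges left for the pure cycles, namely $w\bigl(\tfrac{w-1}{2}-\tfrac{u}{m-1}\bigr)$, is non-negative. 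This is the heart of the matter: the numerical hypotheses correspond precisely to feasibility of the edge-count bookkeeping, with $u\ge m-2$ ensuring there are enough hole vertices (at least $\tfrac{m-1}{2}$) and $w\ge m+1$ enough outside vertices (at least $\tfrac{m+1}{2}$) to realise a cheap cycle, while (ii) provides the integrality and the freedom to replace a few cheap cycles by correction cycles of intermediate $b$ so that the clique edges split into the required multiples of $m$.

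With a feasible family of hole-counts chosen, the remaining task is realisation: laying the $N$ cycles down edge-disjointly so that $K_{u,w}$ and $K_w$ are each covered exactly. I would attack this by decoupling the two edge types. First, decompose $K_{u,w}$ into alternating paths whose interiors alternate between $O$ and $H$ and whose two endpoints both lie in $O$, taking one path of $2b_i$ bipartite edges for each cycle with $b_i\ge 1$; closing each path with a single clique edge joining its two $O$-endpoints yields the corresponding $m$-cycle. The closing edges, together with the pure $m$-cycles realising the entries $b_i=0$, must then exactly exhaust $K_w$. Accordingly I would prescribe the $O$-endpoints of the paths so that the multiset of closing edges is a chosen even subgraph of $K_w$ whose complement in $K_w$ has all degrees even and a multiple of $m$ edges, and then invoke the known decompositions of complete graphs (and complete graphs with a sparse even graph removed) into $m$-cycles of prescribed length. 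To keep this manageable across all $w$, I would also exploit the hole-nesting identity $K_v-K_u=(K_{v'}-K_u)\cup(K_v-K_{v'})$, valid as an edge-disjoint union for every $u<v'<v$, to reduce a general configuration to a bounded set of residues of $w$ modulo $m$ and treat each by the construction above.

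The step I expect to be the main obstacle is precisely this coordinated realisation: decomposing $K_{u,w}$ into paths of the prescribed lengths \emph{and} with prescribed, pairwise distinct pairs of $O$-endpoints, in such a way that the closing edges collide nowhere and leave behind an $m$-cycle-decomposable remainder of $K_w$. Making the endpoint pattern and the residual clique graph simultaneously decomposable is a genuine packing problem, and it is where I would anticipate needing either an amalgamation--detachment argument applied to the outside vertices or an explicit rotational/difference construction, together with a careful hand analysis of the boundary cases in which $w$ exceeds $m$ only slightly. It is exactly the hypothesis $v-u\ge m+1$ that supplies the slack---enough outside vertices per cheap cycle and enough spare clique edges to absorb the divisibility corrections---so that these constructions close up; the disappearance of this slack in the excluded regime $v\le u+m-1$ is what makes those small-gap cases genuinely harder and is the source of the possible exceptions recorded in Theorem~\ref{DWTheorem}.
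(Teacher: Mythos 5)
Your reduction of necessity to Lemma~\ref{NecConds} and your edge-accounting are both correct: a cycle meeting the hole in $b$ vertices does use $2b$ cross edges and $m-2b$ pure edges, and the feasibility of choosing hole-counts $b_1,\dots,b_N$ with $\sum b_i=\tfrac12 uw$ is indeed equivalent to condition (iii). But this numerology is only a necessary condition for a decomposition, and everything after it in your plan is a statement of the problem rather than a solution. The step you yourself flag as ``the main obstacle'' --- decomposing $K_{u,w}$ into paths of prescribed even lengths with prescribed, non-colliding pairs of endpoints in $O$ so that the closing edges together with the pure cycles exactly exhaust $K_w$ in an $m$-cycle-decomposable way --- is the entire content of the theorem, and you offer only candidate techniques (amalgamation--detachment, difference methods) without carrying any of them out. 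As written, the proposal does not prove sufficiency.

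It is worth noting that the paper deliberately avoids the coordinated-realisation problem you pose. Instead of trying to lay down $m$-cycles with prescribed hole-intersection patterns directly, it first constructs a decomposition of $K_v-K_u$ into \emph{short} cycles (lengths $3,4,5,6,h$ together with some $m$-cycles), grouped into batches each summing to $m$, using off-the-shelf decomposition results for $K_w-I$ (Theorem~\ref{AlspachEven}) and for $K_{a,b}$ (Theorems~\ref{ChouFuHuang} and~\ref{Bipartite}); this is Lemmas~\ref{BaseDecomposition} and~\ref{BaseDecomposition_9}, and it is comparatively routine because the short cycle lengths give enormous flexibility. The genuinely hard work is then done by the cycle-switching machinery of Sections~\ref{2ChainSec}--\ref{JoiningSec} (culminating in Lemma~\ref{JoiningLemma}), which iteratively merges each batch of short cycles into a pair of $m$-cycles via repacking. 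Your plan contains no analogue of this merging step, nor of the computer-assisted small cases ($m\in\{9,11,13,15\}$ with $u=m-2$) and the nesting reduction (Lemma~\ref{ReductionLemma}) that the paper needs to finish. If you want to pursue your direct-realisation route, you would essentially be reproving strong structured path-decomposition results for $K_{u,w}$ from scratch, and the boundary cases $w$ close to $m+1$ that you mention would require exactly the kind of delicate argument the switching technique was designed to replace.
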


Theorem \ref{MainTheorem} complements a similar result for cycles of fixed even length; see Theorem 1.2 of \cite{Horsley12}. As a consequence of Theorem \ref{MainTheorem} we find the following.

\begin{corollary}\label{MissingCasesTheorem}
Let $m$ and $u$ be odd integers and let $\nu_m(u)$ be the smallest integer $x>u$ such that $(u,x)$ is $m$-admissible.
\begin{itemize}
	\item[(i)]
If $3<u<m-2$ and there exists an $m$-cycle decomposition of $K_{v'}-K_u$ for each integer $v'$ such that $(u,v')$ is $m$-admissible and $\nu_m(u) \leq v' \leq \nu_m(u)+m-1$, then there exists an $m$-cycle decomposition of $K_v-K_u$ if and only if $(u,v)$ is $m$-admissible.
	\item[(ii)]
If $m-2\leq u\leq \frac{(m-1)(m-2)}{2}$, then there exists an $m$-cycle decomposition of $K_v-K_u$ if and only if $(u,v)$ is $m$-admissible, except that this decomposition may not exist when $\nu_m(u) \leq v \leq u+m-1$.
	\item[(iii)]
If $u> \frac{(m-1)(m-2)}{2}$ or $u\in \{1,3\}$, then there exists an $m$-cycle decomposition of $K_v-K_u$ if and only if $(u,v)$ is $m$-admissible.
\end{itemize}
\end{corollary}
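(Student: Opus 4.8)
The necessity of conditions (N1)--(N4), i.e. of $(u,v)$ being $m$-admissible, is immediate from Lemma \ref{NecConds} in every case, so the plan is to establish sufficiency. The engine throughout is Theorem \ref{MainTheorem}, which supplies a decomposition of $K_v - K_u$ whenever $(u,v)$ is $m$-admissible, $u \ge m-2$ and $v - u \ge m+1$. The key elementary observation is that for an $m$-admissible pair both $u$ and $v$ are odd, so $v - u$ is even; since $m$ is odd this forces $v - u \neq m$, and hence every $m$-admissible pair satisfies either $v - u \le m-1$ or $v - u \ge m+1$.

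For part (iii) with $u > \frac{(m-1)(m-2)}{2}$ I would first note $u \ge m-2$, and then use (N3) to get $v - u \ge \frac{2u}{m-1}+1 > m-1$; combined with $v-u$ even this yields $v - u \ge m+1$, so Theorem \ref{MainTheorem} applies directly. For $u=1$ the graph $K_v - K_1$ is simply $K_v$, and one checks that $(1,v)$ is $m$-admissible exactly when $v$ is odd, $v \ge m$ and $\binom{v}{2} \equiv 0 \mod{m}$, so the result is precisely Theorem \ref{CSExist}. For part (ii), where $m - 2 \le u$, Theorem \ref{MainTheorem} handles every $m$-admissible pair with $v - u \ge m+1$; the only pairs it misses are those with $v - u \le m - 1$, that is, with $\nu_m(u) \le v \le u + m - 1$, which are exactly the possible exceptions in the statement.

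The remaining cases --- part (i), and the case $u = 3$ of part (iii) --- I would treat by a common induction on $v$ that \emph{fills in} larger holes. The starting point is the edge-partition $E(K_v - K_u) = E(K_{v'} - K_u) \cup E(K_v - K_{v'})$, valid whenever the hole of size $u$ is contained in a set of size $v'$: the decomposition of the inner graph $K_{v'} - K_u$ is supplied by the induction hypothesis (or, at the bottom of the induction, by the base decompositions), while the outer graph $K_v - K_{v'}$ is decomposed using Theorem \ref{MainTheorem}. For the inductive step, given an $m$-admissible pair $(u,v)$ with $v$ beyond the base window $[\nu_m(u),\nu_m(u)+m-1]$, I would choose $v'$ to be the largest admissible value not exceeding $\frac{(v-1)(m-1)}{m+1}$; this choice guarantees that $(v',v)$ satisfies (N3), while (N1) and (N2) hold automatically because $\binom{v'}{2} \equiv \binom{u}{2} \equiv \binom{v}{2} \mod{m}$, and $v' \ge \nu_m(u) \ge m > m-2$. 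One then verifies $v - v' \ge m+1$, so that (N4) holds and Theorem \ref{MainTheorem} is applicable to the outer graph.

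The main obstacle is to confirm that this reduction always terminates inside the base window, i.e. that for every admissible $v$ above the window a valid $v'$ with $v' \ge \nu_m(u)$ actually exists. This rests on two facts: the admissible values for a fixed hole form a union of residue classes modulo $2m$, so consecutive admissible values differ by at most $2m$; and $\nu_m(u)$ is small --- bounded above by roughly $\frac{(m-1)^2}{2}$ --- which is precisely what fails for the large holes of parts (ii) and (iii) but holds here because $u$ is small. Carrying out this bookkeeping carefully, so that no admissible $v$ is skipped, is the delicate part of part (i). For the case $u = 3$ the identical filling argument applies, but here the base decompositions of $K_{v'} - K_3$ for $\nu_m(3) \le v' \le \nu_m(3)+m-1$ are not handed to us as a hypothesis and lie outside the range of Theorem \ref{MainTheorem} (since $3 < m-2$ once $m \ge 7$); constructing these finitely many small decompositions directly is the genuine technical obstacle for part (iii).
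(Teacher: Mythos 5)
Your handling of part (ii), of part (iii) for $u>\frac{(m-1)(m-2)}{2}$, and of $u=1$ agrees with the paper: these follow from Theorem \ref{MainTheorem} together with your (correct) parity observation that $v-u$ is even and hence never equals $m$, and from Theorem \ref{CSExist}. The two places where you depart from this are exactly where your argument is incomplete. The case $u=3$ of part (iii) is simply not proved: you flag the required small decompositions of $K_{v'}-K_3$ as ``the genuine technical obstacle'' and do not supply them, and your filling strategy would in fact need such constructions for every odd $m$, an infinite family. The paper disposes of $u=3$ in one line without any filling: by the main result of \cite{BryHorPet14} (the resolution of Alspach's conjecture) there is a decomposition of $K_v$ into $m$-cycles and a single $3$-cycle whenever $(3,v)$ is $m$-admissible, and deleting the $3$-cycle yields the required decomposition of $K_v-K_3$.

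For part (i), your iterated induction on $v$ is both more complicated than necessary and not actually carried out: you defer the two points on which it lives or dies. Choosing $v'$ as the largest admissible value not exceeding $\frac{(v-1)(m-1)}{m+1}$ only forces $v-v'\geq\frac{2v+m-1}{m+1}$, which is less than $m+1$ whenever $v<\frac{m^2+m+2}{2}$, so ``one then verifies $v-v'\geq m+1$'' is not a routine verification but a genuine gap; and the termination-in-the-window analysis is likewise left open. The paper avoids the induction entirely by nesting exactly once at the \emph{fixed} intermediate hole $u^{\star}=\nu_m(u)$ (Lemma \ref{ReductionLemma}): every admissible $v$ outside the window $[\nu_m(u),\nu_m(u)+m-1]$ satisfies $v\geq\nu_m(u)+m+1$ by your own parity remark, and then $E(K_v-K_u)$ splits as $E(K_{\nu_m(u)}-K_u)\cup E(K_v-K_{\nu_m(u)})$, where the inner decomposition is one of the hypothesized base cases and the outer one is supplied by Lemma \ref{AmostEverythingLemma} applied to the pair $(\nu_m(u),v)$ with $\nu_m(u)\geq m$. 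You should restructure part (i) along these lines; as written, the inductive step is not established.
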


Note that $\nu_m(u)$ in the above corollary is at most the smallest integer $y \equiv u \mod{2m}$ such that $y \geq \frac{u(m+1)}{m-1}+1$, because $(u,y)$ is $m$-admissible for any such integer. Corollary \ref{MissingCasesTheorem} makes it clear that for a given odd $m$, we can establish the existence of an $m$-cycle decomposition of ${K_v-K_u}$ for all $m$-admissible $(u,v)$ provided we can construct a number of ``small'' decompositions. We have been able to do this for $m \in \{9,11,13,15\}$ and thus have resolved the problem for each odd $m \leq 15$.

\begin{theorem}\label{SmallMSolution}
Let $m$ be an odd integer such that $3 \leq m \leq 15$ and let $u$ and $v$ be positive integers such that $v>u$. Then there exists an $m$-cycle decomposition of $K_v-K_u$ if and only if $(u,v)$ is $m$-admissible.
\end{theorem}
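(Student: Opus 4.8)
The necessity of admissibility is immediate from Lemma \ref{NecConds}, so the entire content of the theorem is the sufficiency claim, and the plan is to establish it by reducing, for each $m$, to a finite computation. For $m \in \{3,5,7\}$ there is nothing new to prove: the sufficiency of the admissibility conditions is already known from \cite{MenRos83}, \cite{BryHofRod96} and \cite{BryRodSpi97} respectively, so I would simply quote these. All the remaining work is for $m \in \{9,11,13,15\}$, and for these I would lean on Corollary \ref{MissingCasesTheorem}, whose role is precisely to shrink the problem to a bounded list of small decompositions that are not covered by the general machinery of Theorem \ref{MainTheorem}.

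Concretely, part (iii) of Corollary \ref{MissingCasesTheorem} already disposes of every admissible pair with $u \in \{1,3\}$ or $u > \frac{(m-1)(m-2)}{2}$, so only odd $u$ with $5 \le u \le \frac{(m-1)(m-2)}{2}$ remain. For the small holes with $3 < u < m-2$, part (i) reduces the task to constructing an $m$-cycle decomposition of $K_{v'}-K_u$ for every admissible $v'$ with $\nu_m(u) \le v' \le \nu_m(u)+m-1$; for the larger holes with $m-2 \le u \le \frac{(m-1)(m-2)}{2}$, part (ii) reduces the task to settling the potential exceptions $\nu_m(u) \le v \le u+m-1$. In both regimes $u$ is bounded above by $\frac{(m-1)(m-2)}{2}$ and the relevant window of $v$ spans at most $m$ consecutive integers; since such a window meets each residue class modulo $m$ exactly once, imposing oddness of $u,v$ and the congruence $\binom{v}{2}-\binom{u}{2}\equiv 0 \mod{m}$ (together with (N4)) trims it to only a handful of values. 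Thus for each of the four values of $m$ I would tabulate the resulting finite set of pairs $(u,v)$ explicitly.

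It then remains to exhibit, for every pair on these lists, an actual $m$-cycle decomposition of $K_v-K_u$, and this is where the real difficulty lies. Every remaining pair has $v-u \le m-1 < m+1$, so the hole occupies almost the entire vertex set; since the $v-u \le m-1$ outside vertices cannot by themselves support an $m$-cycle, and the hole is an independent set (so each cycle uses at most $\frac{m-1}{2}$ hole vertices and hence at least $\frac{m+1}{2}$ of the few outside vertices), the packing is extremely tight and the uniform construction underlying Theorem \ref{MainTheorem}, which assumes $v-u \ge m+1$, is simply unavailable. I would therefore build these decompositions largely by hand, using rotational/difference constructions with judiciously chosen base cycles—tracking which differences among the outside vertices and which crossing edges into the hole each base cycle consumes—supplemented by explicit cycle listings, and where convenient a short computer search, for the most awkward individual pairs. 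Verifying that all of these near-degenerate small cases can in fact be realised is the main obstacle; once they are in hand, Corollary \ref{MissingCasesTheorem} immediately upgrades them to the full conclusion for all admissible $(u,v)$ and all odd $3 \le m \le 15$.
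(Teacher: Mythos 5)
Your proposal is correct and follows essentially the same route as the paper: necessity from Lemma \ref{NecConds}, reduction to a finite list of small pairs $(u,v)$ via Corollary \ref{MissingCasesTheorem}, and explicit construction of the remaining decompositions. The only differences are cosmetic --- the paper additionally invokes Theorem \ref{DWExceptions} to shave a few pairs with $u\equiv 1$ and $v\equiv m \pmod{2m}$ off the list, and it obtains all the leftover decompositions by a cycle-switching computer search (supplied as supplementary material) rather than by the hand/difference constructions you sketch, which in either case is the unavoidable finite verification step.
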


Theorem \ref{MainTheorem} is proved by beginning with a cycle decomposition of $K_v-K_u$ that involves many short cycles and iteratively altering our decomposition of $K_v-K_u$ so as to ``merge'' a number of short cycle lengths until we eventually obtain an $m$-cycle decomposition of $K_v-K_u$. We alter the decompositions using ``cycle switching'' techniques first developed in \cite{BryHorMae05}. Section \ref{Notation} is devoted to introducing the notation and definitions that we will require, as well as the fundamental lemma encapsulating our cycle switching techniques. Sections \ref{2ChainSec}--\ref{JoiningSec} are devoted to proving Lemma \ref{JoiningLemma} which shows that we can alter a cycle decomposition of $K_v-K_u$ in the required manner. In Section \ref{BaseDecompSec} we construct the decompositions of $K_v-K_u$ involving short cycles that are required as a ``base'' for our construction. Finally in Section \ref{ProofSec} we combine these results in order to prove Theorem \ref{MainTheorem} and its consequences.

\section{Notation and Preliminary Results}\label{Notation}

A \emph{decomposition} of a graph $G$ is a collection of subgraphs of $G$ whose edges form a partition of the edge set of $G$. A \emph{packing} of a graph $G$ is a decomposition of some subgraph $H$ of $G$ and the \emph{leave} of the packing is the graph obtained by removing the edges of $H$ from $G$. We define the \emph{reduced leave} of a packing of a graph $G$ as the graph obtained from its leave by deleting any isolated vertices. For a list of positive integers $M=m_1,\ldots,m_t$, an \emph{$(M)$-decomposition} of a graph $G$ is a decomposition of $G$ into $t$ cycles of lengths $m_1,m_2,\dots,m_t$ and an \emph{$(M)$-packing} of $G$ is a packing of $G$ with $t$ cycles of lengths $m_1,m_2,\dots,m_t$. All lists in this paper will be lists of positive integers.

The $m$-cycle with vertices $x_0,x_1,\ldots,x_{m-1}$ and edges $x_ix_{i+1}$ for $i\in\{0,\ldots,m-1\}$ (with subscripts modulo $m$) is denoted by $(x_0,x_1,\ldots,x_{m-1})$ and the $n$-path with vertices $y_0,y_1,\ldots,y_n$ and edges $y_jy_{j+1}$ for $j\in \{0,1,\ldots,n-1\}$ is denoted by $[y_0,y_1,\ldots,y_n]$. We will say that $y_0$ and $y_n$ are the end vertices of this path.

For a positive integer $v$, let $K_v$ denote the complete graph of order $v$, and for a set $V$, let $K_V$ denote the complete graph with vertex set $V$. For positive integers $u$ and $w$, let $K_{u,w}$ denote the complete bipartite graph with parts of size $u$ and $w$, and for disjoint sets $U$ and $W$, let $K_{U,W}$ denote the complete bipartite graph with parts $U$ and $W$. For graphs $G$ and $H$, we denote by $G \cup H$ the graph with vertex set $V(G) \cup V(H)$ and edge set $E(G) \cup E(H)$, we denote by $G-H$ the graph with vertex set $V(G)$ and edge set $E(G) \setminus E(H)$, and, if $V(G)$ and $V(H)$ are disjoint, we denote by $G \vee H$ the
graph with vertex set $V(G)\cup V(H)$ and edge set $E(G)\cup E(H)\cup E(K_{V(G),V(H)})$ (our use of this last notation will imply that $V(G)$ and $V(H)$ are disjoint).

The neighbourhood $\N_G(x)$ of a vertex $x$ in a graph $G$ is the set of vertices in $G$ that are adjacent to $x$ (not including $x$ itself). We say vertices $x$ and $y$ of a graph $G$ are \textit{twin in $G$} if $\N_G(x)\setminus\{y\} =\N_G(y)\setminus\{x\}$. Let $U$ and $V$ be sets with $U \subseteq V$, and let $G$ be the graph $K_V-K_U$. Note that the vertices in $U$ are pairwise twin and the vertices in $V \setminus U$ are pairwise twin. We say an edge $xy$ of $G$ is a \emph{pure edge} if $x,y\in V \setminus U$, and we say that it is a \emph{cross edge} if $x$ or $y\in U$.

Given a permutation $\pi$ of a set $V$, a subset $S$ of $V$ and a graph $G$ with $V(G)\subseteq V$, $\pi(S)$ is defined to be the set $\{\pi(x):x \in S\}$ and $\pi(G)$ is defined to be the graph with vertex set $\{\pi(x):x\in V(G)\}$ and edge set $\{\pi(x)\pi(y):xy\in E(G)\}$.

The following lemma encapsulates the switching technique that is fundamental to the results in Sections \ref{2ChainSec}--\ref{JoiningSec}. It is almost identical to Lemma 2.1 of \cite{Horsley12} and the proof given in that paper suffices to prove this result as well.

%%%%   Switching lemma for general graph %%%%
	
\begin{lemma}\label{CycleSwitchGeneral}
Let $u$ and $v$ be positive odd integers with $u\leq v$, and let $M$ be a list of integers. Let $\mathcal{P}$ be an $(M)$-packing of $K_v-K_u$ with leave $L$,  let $\a$ and $\b$ be twin
vertices in $K_v-K_u$, and let $\pi$ be the transposition $(\a\b)$. Then there exists a partition of the set $(\N_L(\a)\cup \N_L(\b)) \setminus
((\N_L(\a)\cap \N_L(\b)) \cup \{\a,\b\})$ into pairs such that for each pair
$\{x,y\}$ of the partition, there exists an $(M)$-packing $\mathcal{P}'$ of $K_v-K_u$ whose leave
$L'$ differs from $L$ only in that each of $\a x$, $\a y$, $\b x$ and $\b y$ is an edge in $L'$ if and only if it is not an edge in $L$. Furthermore, if $\mathcal{P} = \{C_1,C_2,\ldots,C_t\}$, then $\mathcal{P}' = \{C'_1,C'_2,\ldots,C'_t\}$, where, for each $i \in \{1,\ldots,t\}$, $C'_i$ is a cycle of the same length as $C_i$ such that
\begin{itemize}
    \item[(i)]
if neither $\alpha$ nor $\beta$ is in $V(C_i)$, then $C'_i=C_i$;
    \item[(ii)]
if exactly one of $\alpha$ and $\beta$ is in $V(C_i)$, then either
$C'_i=C_i$ or $C'_i=\pi(C_i)$; and
    \item[(iii)]
if both $\alpha$ and $\beta$ are in $V(C_i)$, then $C'_i \in
\{C_i,\pi(C_i),\pi(P_i)\cup P^{\dag}_i,P_i \cup \pi(P^{\dag}_i)\}$, where $P_i$ and
$P^{\dag}_i$ are the two paths in $C_i$ which have end vertices $\alpha$ and $\beta$.
\end{itemize}
\end{lemma}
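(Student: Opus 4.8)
The plan is to prove the lemma by a direct switching argument organised around an auxiliary multigraph that records the local structure of the packing $\P$ at $\a$ and $\b$. The starting observation is a parity fact. Since $u$ and $v$ are odd, every vertex of $K_v-K_u$ has even degree (a hole vertex has degree $v-u$ and a non-hole vertex has degree $v-1$, both even), so the leave $L$ of \emph{any} cycle packing is an even graph. Hence $|\N_L(\a)|$ and $|\N_L(\b)|$ are even, and it follows that the set in the statement, which equals $(\N_L(\a)\triangle\N_L(\b))\setminus\{\a,\b\}$, has even cardinality. Because $\a$ and $\b$ are twin, every vertex $w\neq\a,\b$ is adjacent to $\a$ in $K_v-K_u$ if and only if it is adjacent to $\b$; write $N$ for this common neighbourhood. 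Then the set to be partitioned, call it $S$, is precisely the set of $w\in N$ for which exactly one of $\a w,\b w$ lies in $L$.

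Next comes the key construction. I would build a multigraph $\mathcal{G}^{*}$ on vertex set $N$: for each cycle $C\in\P$ passing through $\a$, join the two neighbours of $\a$ on $C$ by an edge (an \emph{$\a$-edge}), and for each cycle through $\b$, join the two neighbours of $\b$ by a \emph{$\b$-edge} (the edge $\a\b$, which $\pi$ fixes, needs a small separate convention when it lies on a cycle). Since $\a w$ lies in at most one cycle, the number of $\a$-edges at $w$ is $1$ if $\a w\notin L$ and $0$ otherwise, and similarly for $\b$; thus $\deg_{\mathcal{G}^{*}}(w)=2-[\a w\in L]-[\b w\in L]$, where $[\cdot]$ is $1$ when the condition holds and $0$ otherwise. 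Consequently $\mathcal{G}^{*}$ has maximum degree $2$, so it is a disjoint union of paths and cycles, and its degree-$1$ vertices are exactly the elements of $S$. The required partition of $S$ is then obtained by pairing the two endpoints of each maximal path of $\mathcal{G}^{*}$.

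For each pair $\{x,y\}$ I would produce $\P'$ by modifying only the cycles meeting the $x$--$y$ path $Q$ in $\mathcal{G}^{*}$, leaving all others fixed (so $C'_i=C_i$ for those). A short check shows that the edges of $Q$ alternate in type: at an interior vertex $w_j$ both $\a w_j$ and $\b w_j$ lie in cycles, its unique $\a$-edge and unique $\b$-edge being the two edges of $Q$ at $w_j$. I would then apply $\pi$ to the whole of every cycle met by $Q$ that contains only one of $\a,\b$ (cases (i),(ii)), and to the appropriate single $\a$--$\b$ path of every cycle met by $Q$ that contains both (case (iii)); by the twin property each resulting $C'_i$ is a cycle of the same length inside $K_v-K_u$. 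The alternation forces each shared edge $\a w_j$ or $\b w_j$ at an interior vertex to pass between two consecutive cycles of the chain rather than to the leave, so those toggles cancel, while at the endpoints $x,y$ a single toggle survives on each of $\a x,\b x$ and $\a y,\b y$. This is exactly the assertion that $L'$ differs from $L$ only in the four edges $\a x,\a y,\b x,\b y$.

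The main obstacle is this last verification: confirming that $\P'$ is genuinely a packing, that is, that the switched cycles remain pairwise edge-disjoint and that each $C'_i$ is a single cycle rather than a pair of shorter cycles, together with checking that no leave edge other than the intended four changes. The delicate points are the cycles containing both $\a$ and $\b$, where only one of the two $\a$--$\b$ paths is reversed and one must check that gluing $\pi(P_i)$ to $P^{\dag}_i$ yields a single cycle through $\a$ and $\b$, and the self-paired edge $\a\b$, which $\pi$ fixes; both require the careful telescoping bookkeeping along $Q$ sketched above. As the authors note, this is precisely the computation carried out for the even-length analogue, so I would adapt the proof of Lemma 2.1 of \cite{Horsley12} verbatim rather than reproduce it in full.
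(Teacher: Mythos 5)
Your proposal is correct and follows essentially the same route as the paper, which gives no proof of this lemma but simply notes that the proof of Lemma 2.1 of \cite{Horsley12} carries over; your auxiliary multigraph on the common neighbourhood of $\a$ and $\b$, with endpoints of its maximal paths giving the pairing and the alternating $\a$-/$\b$-edges along a path driving the telescoping of the switch, is precisely that argument. The points you flag as delicate (the convention when $\a\b$ lies on a cycle, and checking that reversing one $\a$--$\b$ path of a cycle through both vertices yields a single cycle of the same length) are exactly the bookkeeping handled in the cited proof, so deferring to it is appropriate.
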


If we are applying Lemma~\ref{CycleSwitchGeneral} we say that we are performing the $(\a,\b)$-switch with origin $x$ and terminus $y$ (equivalently, with origin $y$ and terminus $x$). Note that if $U$ and $V$ are sets with $U \subseteq V$, then two vertices $\a,\b\in V$ are twin in $K_V-K_U$ if and only if $\{\a,\b\}\subseteq U$ or $\{\a,\b\}\subseteq V \setminus U$.

\begin{definition}
Let $G$ be a graph, and let $\mathcal{P} = \{G_1,\ldots,G_t\}$ be a packing of $G$. We say that another packing $\mathcal{P}'$ of $G$ is a \emph{repacking} of $\mathcal{P}$ if $\mathcal{P}' = \{G'_1,\ldots,G'_t\}$ where for each $i \in \{1,\ldots,t\}$ there is a permutation $\pi_i$ of $V(G)$ such that $\pi_i(G_i)=G'_i$ and $x$ and $\pi_i(x)$ are twin in $G$ for each $x \in V(G)$.
\end{definition}

Obviously, for any list of integers $M$, any repacking of an $(M)$-packing of a graph $G$ is also an $(M)$-packing of $G$. If $G$ is a complete graph with a hole, then the above definition implies that $G_i$ and $G'_i$ have the same number of pure and cross edges for each $i \in \{1,\ldots,t\}$ and hence also that the leaves of $\mathcal{P}$ and $\mathcal{P}'$ have the same number of pure and cross edges. If $\mathcal{P}$ is a packing of a graph $G$, $\mathcal{P}'$ is a repacking of $\mathcal{P}$ and $\mathcal{P}''$ is a repacking of $\mathcal{P}'$, then $\mathcal{P}''$ is obviously also a repacking of $\mathcal{P}$. If $\mathcal{P}$ is a packing of a graph $G$ and $\mathcal{P}'$ is another packing of $G$ obtained from $\mathcal{P}$ by applying Lemma \ref{CycleSwitchGeneral}, then $\mathcal{P}'$ is necessarily a repacking of $\mathcal{P}$.

The following definitions for two types of graphs, rings and chains, are the same as in \cite{Horsley12}.
\begin{definition}
An $(a_1,a_2,\ldots,a_s)$-chain (or $s$-chain if we do not wish to specify the lengths of the cycles) is the edge-disjoint union of $s\geq 2$ cycles $A_1,A_2,\ldots, A_s$ such that
\begin{itemize}
\item $A_i$ is a cycle of length $a_i$ for $1\leq i\leq s$; and
\item for $1\leq i<j\leq s$, $|V(A_i)\cap V(A_j)|=1$ if $j=i+1$ and $|V(A_i)\cap V(A_j)|=0$ otherwise.
\end{itemize}
\end{definition}
We call $A_1$ and $A_s$ the \emph{end cycles} of the chain, and for $1<i<s$ we call $A_i$ an \emph{internal cycle} of the chain. A vertex which is in two cycles of the chain is said to be the \emph{link vertex} of those cycles. We denote a $2$-chain with cycles $P$ and $Q$ by $P\cdot Q$.

\begin{definition}
An $(a_1,a_2,\ldots,a_s)$-ring (or $s$-ring if we do not wish to specify the lengths of the cycles) is the edge-disjoint union of $s\geq 2$ cycles $A_1,A_2,\ldots, A_s$ such that
\begin{itemize}
\item $A_i$ is a cycle of length $a_i$ for $1\leq i\leq s$;
\item for $s \geq 3$ and $1\leq i<j\leq s$, $|V(A_i)\cap V(A_j)|=1$ if $j=i+1$ or if $(i,j)=(1,s)$, and $|V(A_i)\cap V(A_j)|=0$ otherwise; and
\item if $s=2$ then $|V(A_1)\cap V(A_2)|=2$.
\end{itemize}
\end{definition}
We refer to the cycles $A_1,A_2,\ldots,A_s$ as the \emph{ring cycles} of the ring in order to distinguish them from the other cycles that can be found within the graph. A vertex which is in two ring cycles of the ring is said to be a \emph{link vertex} of those cycles.

\section{Packings whose leaves are 2-chains}\label{2ChainSec}
Our aim in Sections~\ref{2ChainSec}--\ref{JoiningSec} is to prove Lemma~\ref{JoiningLemma}. This lemma allows us to begin with a packing of $K_v-K_u$ satisfying various conditions and find a repacking whose leave can be decomposed into two $m$-cycles, each with exactly one pure edge. Finding $m$-cycles of this form is important because, in an $m$-cycle decomposition of $K_v-K_u$ with $m$ odd and $v = \frac{u(m+1)}{m-1}+1$ (that is, with equality in necessary condition (N3)), every cycle must contain exactly one pure edge. Thus in Lemmas~\ref{2PureEdgesOddCycles}--\ref{JoiningLemma} we focus on packings of $K_v-K_u$ whose leaves have exactly two pure edges (recall that repacking preserves the number of pure and cross edges in the leave).

In this section we focus on starting with a packing whose reduced leave is a $2$-chain and finding a repacking whose reduced leave is the edge-disjoint union of two cycles of specified lengths. Our main goal here is to prove Lemma \ref{2PureEdges}. The other lemmas in this section are used only in order to prove it. Lemmas \ref{FigureOfEight1}--\ref{Rearrange_Local} apply to packings of arbitrary graphs, while in Lemmas \ref{2PureEdgesOddCycles}--\ref{2PureEdges} we concentrate on packings of complete graphs with holes whose leaves have exactly two pure edges.

\begin{lemma}\label{FigureOfEight1}
Let $G$ be a graph and let $M$ be a list of integers. Let $m$, $p$ and $q$ be positive integers with $m\geq p$ and $p+q-m\geq 3$. Suppose there exists an $(M)$-packing $\mathcal{P}$ of $G$ whose reduced leave is a $(p,q)$-chain $(x_1, x_2, \ldots, x_{p-1},c)\cdot(c, y_1, y_2, \ldots, y_{q-1})$ such that $x_1$ and $y_{m-p+1}$ are twin in $G$. Then there exists a repacking of $\mathcal{P}$ whose reduced leave is either
\begin{itemize}
    \item
the edge-disjoint union of an $m$-cycle and a $(p+q-m)$-cycle; or
    \item
the $(m-p+2, 2p+q-m-2)$-chain given by $(x_1,y_{m-p},y_{m-p-1},\ldots,y_1,c)\cdot(c,x_{p-1},x_{p-2},\ldots,x_2,y_{m-p+1},y_{m-p+2},\ldots,y_{q-1})$.
\end{itemize}
\end{lemma}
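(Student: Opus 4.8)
The plan is to apply the switching technique of Lemma~\ref{CycleSwitchGeneral} with the twin vertices $\a=x_1$ and $\b=y_{m-p+1}$ and the transposition $\pi=(x_1\,y_{m-p+1})$. Writing $L$ for the leave of $\mathcal{P}$, whose only non-isolated part is the given $(p,q)$-chain, the relevant leave-neighbourhoods are $\N_L(x_1)=\{x_2,c\}$ and $\N_L(y_{m-p+1})=\{y_{m-p},y_{m-p+2}\}$; the hypotheses $m\geq p$ and $p+q-m\geq 3$ guarantee that $y_{m-p+1}$ is an interior vertex of the second cycle and that $y_{m-p}$ and $y_{m-p+2}$ are well defined. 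First I would dispose of the boundary case $m=p$ separately: then $y_{m-p}=c$ lies in $\N_L(x_1)\cap\N_L(y_{m-p+1})$, so the set that Lemma~\ref{CycleSwitchGeneral} partitions is just $\{x_2,y_2\}$, and the unique resulting switch merges the two cycles into an edge-disjoint union of a $p$-cycle and a $q$-cycle meeting only at $c$, which is exactly the first alternative since $m=p$. For $m>p$ the set to be partitioned is $S=\{x_2,c,y_{m-p},y_{m-p+2}\}$, and the heart of the argument is to track, for each candidate pairing $\{s,s'\}$, the four leave edges $\a s,\a s',\b s,\b s'$ that the switch toggles.

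The two computations that matter are as follows. If the partition pairs $y_{m-p}$ with $c$, the switch deletes $x_1c$ and $y_{m-p}y_{m-p+1}$ from $L$ and inserts $x_1y_{m-p}$ and $y_{m-p+1}c$; tracing the resulting degree-two graph shows the reduced leave becomes the $m$-cycle $(c,x_{p-1},\ldots,x_1,y_{m-p},\ldots,y_1)$ together with the $(p+q-m)$-cycle $(c,y_{m-p+1},\ldots,y_{q-1})$, meeting only at $c$; this is the first alternative. If instead the partition pairs $y_{m-p}$ with $x_2$, the toggled edges are $x_1x_2$, $y_{m-p}y_{m-p+1}$, $x_1y_{m-p}$ and $x_2y_{m-p+1}$, and the same tracing yields precisely the $(m-p+2,\,2p+q-m-2)$-chain displayed in the second alternative. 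Since any partition of $S$ pairs $y_{m-p}$ with one of $c$, $x_2$ or $y_{m-p+2}$, these two cases immediately give the conclusion.

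The main obstacle is the remaining possibility, in which $y_{m-p}$ is paired with $y_{m-p+2}$ (equivalently $x_2$ is paired with $c$): here the toggling produces neither a chain nor a disjoint union but a $(p,q)$-ring, whose two cycles share the two vertices $c$ and $x_1$ (respectively $c$ and $y_{m-p+1}$). Since Lemma~\ref{CycleSwitchGeneral} does not let us prescribe the partition, this pairing cannot simply be avoided; instead I would apply the switching lemma a second time to the ring, in which one of $x_1,y_{m-p+1}$ has become isolated while the other has degree four, and show by the same edge-tracing that the second switch transfers the appropriate two edges and converts the ring into the union (a) or the chain (b). The delicate point — and the step I expect to be hardest — is to verify that this second switch genuinely makes progress rather than merely returning to a figure-eight, so that the procedure terminates. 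Establishing a suitable invariant (for instance controlling which of $x_1,y_{m-p+1}$ is rendered isolated, or tracking the edges incident to the degree-four link vertex) or a complete case analysis of the four edges at that vertex is what the argument really turns on.
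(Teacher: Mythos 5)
Your switch is exactly the one the paper uses (the $(x_1,y_{m-p+1})$-switch; the paper simply fixes its origin at $x_2$, so its three cases are terminus $c$, $y_{m-p}$ or $y_{m-p+2}$), and your analyses of the pairings $\{y_{m-p},x_2\}$ and $\{y_{m-p},c\}$ agree with the paper's. The gap is in your third case. You assert that the pairing in which $x_2$ is matched with $c$ (equivalently $y_{m-p}$ with $y_{m-p+2}$) produces ``neither a chain nor a disjoint union but a $(p,q)$-ring'' and therefore needs a second switch together with a termination argument, which you do not supply. This rests on a misreading of the conclusion: the first alternative asks only for the \emph{edge-disjoint} union of an $m$-cycle and a $(p+q-m)$-cycle; the two cycles are permitted to share vertices. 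After toggling $x_1x_2$, $x_1c$, $y_{m-p+1}x_2$ and $y_{m-p+1}c$, the reduced leave has exactly two vertices of degree $4$, namely $c$ and $y_{m-p+1}$, and it decomposes into the $m$-cycle $(y_1,y_2,\ldots,y_{m-p+1},x_2,x_3,\ldots,x_{p-1},c)$ and the $(p+q-m)$-cycle $(y_{m-p+1},y_{m-p+2},\ldots,y_{q-1},c)$; these meet in $\{c,y_{m-p+1}\}$ but are edge-disjoint, so the first alternative holds. (The same is true for the companion pair $\{y_{m-p},y_{m-p+2}\}$ of that partition: one obtains the $m$-cycle $(x_1,x_2,\ldots,x_{p-1},c,y_1,\ldots,y_{m-p})$ and the $(p+q-m)$-cycle $(x_1,y_{m-p+2},\ldots,y_{q-1},c)$, meeting in $\{x_1,c\}$.) That leaves with repeated vertices are acceptable here is confirmed by how the lemma is consumed downstream: Lemmas~\ref{FigureOfEight3}, \ref{2Cycles} and \ref{JoiningLemma} only ever need the two cycles of the final leave to be edge-disjoint so that they can be adjoined to the packing. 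So the step you flag as hardest --- showing that a second switch makes progress --- is not needed; every admissible pairing already lands in one of the two stated alternatives.

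Two minor further points. In the boundary case $p=m$ the original leave is already the edge-disjoint union of an $m$-cycle and a $(p+q-m)$-cycle (namely the two cycles of the chain), so no switch is required there at all. And note that Lemma~\ref{CycleSwitchGeneral} lets you choose which pair of the (unknown) partition to act on by specifying the origin; fixing the origin at $x_2$, as the paper does, reduces the analysis to the three possible termini and avoids having to discuss both pairs of any one partition.
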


\begin{proof}
Note that $p+q-m \geq 3$ implies that $m-p+1 \leq q-2$. If $p=m$ then we are finished, so assume $p<m$. Since $x_1$ and $y_{m-p+1}$ are twin in $G$, we can perform the $(x_1, y_{m-p+1})$-switch with origin $x_2$. If the switch has terminus $y_{m-p}$, then we obtain a repacking of $\mathcal{P}$ whose reduced leave is the $(m-p+2, 2p+q-m-2)$-chain $(x_1,y_{m-p},y_{m-p-1},\ldots,y_1,c) \cdot (c,x_{p-1},x_{p-2},\ldots,x_2,y_{m-p+1},y_{m-p+2},\ldots,y_{q-1})$. Otherwise the switch has terminus $c$ or $y_{m-p+2}$ and in either case we obtain a repacking of $\mathcal{P}$ whose reduced leave is the edge-disjoint union of the $m$-cycle $(y_1, y_2,\ldots,y_{m-p+1}, x_2, x_3, \ldots, x_{p-1}, c)$  and some $(p+q-m)$-cycle.
\end{proof}

\begin{lemma}\label{FigureOfEight2}
Let $G$ be a graph and let $M$ be a list of integers. Let $m$, $p$ and $q$ be positive integers with $m\geq p$ and $p+q-m\geq 3$. Suppose there exists an $(M)$-packing $\mathcal{P}$ of $G$ whose reduced leave is a $(p,q)$-chain $(x_1, x_2, \ldots, x_{p-1},c)\cdot(c, y_1, y_2, \ldots, y_{q-1})$ such that $x_2$ and $y_{m-p+2}$ are twin in $G$. Then there exists a repacking of $\mathcal{P}$ whose reduced leave is either
\begin{itemize}
    \item
the edge-disjoint union of an $m$-cycle and a $(p+q-m)$-cycle; or
    \item
the $(m-p+4, 2p+q-m-4)$-chain given by $(x_1,x_2,y_{m-p+1},y_{m-p},\ldots,y_1,c) \cdot (c,x_{p-1},x_{p-2},\ldots,x_3,y_{m-p+2},y_{m-p+3},\ldots,y_{q-1})$.
\end{itemize}
\end{lemma}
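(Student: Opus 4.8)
The plan is to mirror closely the proof of Lemma~\ref{FigureOfEight1}, shifting every index by one and performing the switch on the twin pair $x_2,y_{m-p+2}$ in place of $x_1,y_{m-p+1}$. First I would record that the hypothesis $p+q-m\geq 3$ is equivalent to $m-p+2\leq q-1$, so that $y_{m-p+2}$ is a genuine vertex of the $q$-cycle and its use as a twin is legitimate. As in Lemma~\ref{FigureOfEight1}, the case $p=m$ is immediate: the reduced leave is then already the edge-disjoint union of the $m$-cycle $(x_1,\ldots,x_{p-1},c)$ and the $q$-cycle, and $q=p+q-m$, so the first alternative holds. I would therefore assume $p<m$, which gives $m-p+2\geq 3$ and ensures that the vertices $x_3$ and $y_{m-p+1}$ used below are distinct from $c$ and from the twins.

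Since $x_2$ and $y_{m-p+2}$ are twin in $G$, I would invoke Lemma~\ref{CycleSwitchGeneral} to perform the $(x_2,y_{m-p+2})$-switch with origin $x_3$; its effect is to toggle the four edges $x_2x_3$, $x_2 t$, $y_{m-p+2}x_3$ and $y_{m-p+2}t$, where $t$ is the terminus. The key preliminary step is to identify the possible termini, namely the possible partners of $x_3$ in the partition of $(\N_L(x_2)\cup\N_L(y_{m-p+2}))\setminus((\N_L(x_2)\cap\N_L(y_{m-p+2}))\cup\{x_2,y_{m-p+2}\})$ guaranteed by Lemma~\ref{CycleSwitchGeneral}. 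Reading the neighbourhoods off the chain gives $\N_L(x_2)=\{x_1,x_3\}$ and, in the generic range $p+q-m\geq 4$, $\N_L(y_{m-p+2})=\{y_{m-p+1},y_{m-p+3}\}$; these are disjoint and contain neither $x_2$ nor $y_{m-p+2}$, so the set to be partitioned is $\{x_1,x_3,y_{m-p+1},y_{m-p+3}\}$ and the terminus is one of $x_1$, $y_{m-p+1}$, $y_{m-p+3}$. In the boundary case $p+q-m=3$ one has $y_{m-p+2}=y_{q-1}$, so $\N_L(y_{m-p+2})=\{y_{m-p+1},c\}$ and the terminus is one of $x_1$, $y_{m-p+1}$, $c$.

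The two alternatives then correspond to the value of the terminus. If $t=y_{m-p+1}$, the switch removes $x_2x_3$ and $y_{m-p+1}y_{m-p+2}$ and inserts $x_2y_{m-p+1}$ and $x_3y_{m-p+2}$; since every vertex except $c$ then has degree $2$, I would check that the reduced leave is exactly the $(m-p+4,2p+q-m-4)$-chain $(x_1,x_2,y_{m-p+1},\ldots,y_1,c)\cdot(c,x_{p-1},\ldots,x_3,y_{m-p+2},\ldots,y_{q-1})$, giving the second alternative. For any other terminus I would verify that the reduced leave is the edge-disjoint union of the $m$-cycle $(y_1,\ldots,y_{m-p+2},x_3,\ldots,x_{p-1},c)$ and a $(p+q-m)$-cycle: in each subcase the newly inserted edge $x_3y_{m-p+2}$, together with the untouched $y$-path $y_1,\ldots,y_{m-p+2}$, the untouched $x$-path $x_3,\ldots,x_{p-1}$, and the edges $x_{p-1}c$ and $cy_1$, forms this $m$-cycle, while the remaining edges close up into a single $(p+q-m)$-cycle.

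The likely obstacle is the bookkeeping of the terminus set, especially the boundary case $p+q-m=3$, where $y_{m-p+2}$ coincides with the end vertex $y_{q-1}$ and so is adjacent to $c$ rather than to a $y_{m-p+3}$; here the extra terminus is $c$, and one must confirm that the leave still splits as the same $m$-cycle together with a $(p+q-m)$-cycle, the latter degenerating to the triangle $(x_1,x_2,c)$. Beyond this, checking that every terminus other than $y_{m-p+1}$ produces this same $m$-cycle, and that the only vertex which may be isolated by the switch (for instance $x_2$ when $t=x_1$) is harmlessly dropped when passing to the reduced leave, is routine once the effect of the switch on the four toggled edges is written out.
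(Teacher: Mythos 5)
Your proposal is correct and follows essentially the same route as the paper: perform the $(x_2,y_{m-p+2})$-switch with origin $x_3$, read off the chain when the terminus is $y_{m-p+1}$, and otherwise obtain the $m$-cycle $(y_1,\ldots,y_{m-p+2},x_3,\ldots,x_{p-1},c)$ together with a $(p+q-m)$-cycle. Your extra bookkeeping for the boundary case $p+q-m=3$ (where the third possible terminus is $c$ rather than $y_{m-p+3}$) is a point the paper leaves implicit, and is handled correctly.
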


\begin{proof} Note that $p+q-m \geq 3$ implies that $m-p+2 \leq q-1$. If $p=m$ then we are finished, so assume $p<m$. Since $x_2$ and $y_{m-p+2}$ are twin in $G$, we can perform the $(x_2, y_{m-p+2})$-switch with origin $x_3$. If the switch has terminus $y_{m-p+1}$, then we obtain a repacking of $\mathcal{P}$ whose reduced leave is the $(m-p+4, 2p+q-m-4)$-chain $(x_1, x_2,y_{m-p+1},y_{m-p},\ldots,y_1,c) \cdot (c,x_{p-1},x_{p-2},\ldots,x_3,y_{m-p+2},y_{m-p+3},\ldots,y_{q-1})$. Otherwise the switch has terminus $y_{m-p+3}$ or $x_1$ and in either case we obtain a repacking of $\mathcal{P}$ whose reduced leave is the edge-disjoint union of the $m$-cycle $(y_1,y_2,\ldots,y_{m-p+2},x_3,x_4,\ldots,x_{p-1},c)$ and some $(p+q-m)$-cycle.
\end{proof}

\begin{lemma}\label{FigureOfEight3}
Let $G$ be a graph and let $M$ be a list of integers. Let $m$, $p$ and $q$ be positive integers with $m$ odd, $m\geq p$ and $p+q-m\geq 3$. Suppose there exists an $(M)$-packing $\mathcal{P}$ of $G$ whose reduced leave is a $(p,q)$-chain $(x_1,x_2,\ldots,x_{p-1},c) \cdot (c,y_1,y_2,\ldots,y_{q-1})$ such that either
\begin{itemize}
    \item[(i)]
$p$ is odd, $x_1, y_3, y_5,\ldots, y_{m-p+1}$ are pairwise twin in $G$ and $y_2, y_4,\ldots, y_{m-p+2}$ are pairwise twin in $G$; or
    \item[(ii)]
$p$ is even, $x_1,x_3,\ldots, x_{p-3}$ are pairwise twin in $G$ and $y_{m-p+2},x_2,x_4,\ldots,x_{p-2}$ are pairwise twin in $G$.
\end{itemize}
Then there exists a repacking of $\mathcal{P}$ whose reduced leave is the edge-disjoint union of an $m$-cycle and a $(p+q-m)$-cycle.
\end{lemma}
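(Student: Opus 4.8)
The plan is to prove both cases by induction, using Lemmas~\ref{FigureOfEight1} and~\ref{FigureOfEight2} as the engine. Each of those lemmas either already delivers a reduced leave that is the edge-disjoint union of an $m$-cycle and a $(p+q-m)$-cycle, or returns a new $2$-chain of prescribed cycle lengths. The idea is to compose two such steps so that, in the branch where neither succeeds, we are handed a chain to which an induction hypothesis applies. Since a repacking of a repacking is again a repacking, composing these applications keeps us within repackings of $\mathcal{P}$ throughout.

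In case (i), where $p$ is odd, I would induct on $m-p$, which is even and non-negative. If $p=m$ the given chain is already the edge-disjoint union of an $m$-cycle and a $(p+q-m)$-cycle and there is nothing to do. If $p<m$, first apply Lemma~\ref{FigureOfEight1}: its hypothesis that $x_1$ and $y_{m-p+1}$ are twin holds since both lie in the twin set $\{x_1,y_3,\ldots,y_{m-p+1}\}$. If this produces the required decomposition we are finished; otherwise it returns the $(m-p+2,\,2p+q-m-2)$-chain displayed in Lemma~\ref{FigureOfEight1}. Re-expressing this chain in the standard form $(X_1,\ldots,X_{p'-1},c)\cdot(c,Y_1,\ldots,Y_{q'-1})$ with $p'=m-p+2$, the vertex in the role of $X_2$ is $y_{m-p}$ and the vertex in the role of $Y_{m-p'+2}$ is $y_{m-p+2}$ (using $m-p'+2=p$); these are twin because both lie in $\{y_2,y_4,\ldots,y_{m-p+2}\}$, so Lemma~\ref{FigureOfEight2} applies. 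Again we either finish or obtain a chain whose first cycle has length $m-p'+4=p+2$, and computing its vertices shows it is a $(p+2,\,q-2)$-chain meeting the hypotheses of case (i) for the parameter $p+2$; the induction hypothesis then completes the argument.

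Case (ii), where $p$ is even (so $p\geq 4$), is symmetric but runs the recursion in the opposite direction, and here I would induct on $p$. One applies Lemma~\ref{FigureOfEight2} first, using that $x_2$ and $y_{m-p+2}$ are twin (both in $\{y_{m-p+2},x_2,x_4,\ldots,x_{p-2}\}$); if it does not already succeed it returns a chain whose first cycle has length $m-p+4$. When $p=4$ this length equals $m$, so applying Lemma~\ref{FigureOfEight1} terminates immediately with the desired decomposition, giving the base case. When $p\geq 6$, Lemma~\ref{FigureOfEight1} requires its relevant twin pair, which works out to be $x_1$ and $x_3$; these are twin via $\{x_1,x_3,\ldots,x_{p-3}\}$. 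In the branch where this does not succeed, the failure output is a $(p-2,\,q+2)$-chain, and one checks that the two twin conditions for the parameter $p-2$ hold, so the induction closes.

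The bulk of the work, and the main place where care is needed, is the bookkeeping in the inductive step: after two successive switches the vertices of the resulting chain are a scrambled interleaving of the original $x_i$'s and $y_j$'s, so I must carefully re-express it in the standard form $(x'_1,\ldots,c)\cdot(c,y'_1,\ldots)$ and then confirm that the twin sets demanded for the next round (for parameter $p\pm 2$) really are implied by those assumed for $p$. The structural fact that makes this routine rather than a genuine obstacle is that each new twin set is \emph{contained} in a corresponding old one, so no new twinness ever has to be assumed; once this containment is verified explicitly in both cases, the induction proceeds without difficulty.
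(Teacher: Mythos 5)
Your proposal is correct and is essentially the paper's own argument: the paper runs a single mutual induction along the zigzag sequence $m,4,m-2,6,\ldots$, applying Lemma~\ref{FigureOfEight1} or Lemma~\ref{FigureOfEight2} once per step and alternating between cases (i) and (ii), whereas you compose two consecutive such steps into one so that each parity class gets its own self-contained induction (on $m-p$ in case (i), on $p$ in case (ii)). The switches performed, the intermediate $(m-p+2,\cdot)$- and $(m-p+4,\cdot)$-chains, and the twin-set containments you cite are exactly those appearing in the paper, so the difference is purely one of bookkeeping.
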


\begin{proof}
If $p=m$, then we are finished. If $p=4$, then $x_2$ and $y_{m-2}$ are twin in $G$ and we can apply Lemma~\ref{FigureOfEight2} to obtain the required packing. So we may assume $p \notin \{4,m\}$.
Let $p_0, p_1,\ldots,p_{\ell}$ be the sequence $m, 4, m-2, 6,\ldots, 7, m-3, 5, m-1, 3.$ For some $k \in \{2,\ldots,\ell\}$ assume that the lemma holds for $p=p_{k-1}$. We will now show that it holds for $p=p_k$.

\noindent{\bf Case 1.} Suppose $p=p_k$ is odd. Since $x_1$ and $y_{m-p+1}$ are twin in $G$, Lemma~\ref{FigureOfEight1} can be applied to obtain a repacking $\mathcal{P}'$ of $\mathcal{P}$. Either we are finished, or the reduced leave of $\mathcal{P}'$ is a $(p',q')$-chain where $p'=m-p+2$ and $q'=2p+q-m-2$. We give this chain and, below it, a relabelling of its vertices.
$$\arraycolsep=0pt%
\begin{array}{cccccccccccccccccc}
  (&x_1, \:&\: y_{m-p}, \:&\: y_{m-p-1}, \:&\: \ldots, \:&\: y_1, \:&\: c&)  \cdot  (&c, &\: x_{p-1}, \:&\: x_{p-2}, \:&\: \ldots, \:&\: x_2, \:&\: y_{m-p+1}, \:&\: y_{m-p+2}, \:&\: \ldots, \:&\: y_{q-1}&) \\
  (&\:x'_1, \:&\: x'_2, \:&\: x'_3, \:&\: \ldots, \:&\: x'_{p'-1}, \:&\: c &)  \cdot (& c, \:&\: y'_1, \:&\: y'_2, \:&\: \ldots, \:&\: y'_{p-2}, \:&\: y'_{p-1}, \:&\: y'_p, \:&\: \ldots, \:&\: y'_{q'-1}\:&)
\end{array}$$
Note that $p'=p_{k-1}$ and $p'$ is even.  Since $x'_1=x_1$ and $\{x'_3,x'_5,\ldots, x'_{p'-3}\} = \{y_3,y_5,\ldots,y_{m-p-1}\}$, the vertices $x'_1,x'_3,\ldots,x'_{p'-3}$ are pairwise twin in $G$.
Similarly, since $y'_{m-p'+2} = y'_p = y_{m-p+2}$ and $\{x'_2,x'_4,\ldots,x'_{p'-2}\} =\{y_2,y_4,\ldots, y_{m-p}\}$, the vertices $y'_{m-p'+2}, x'_2,x'_4,\ldots,x'_{p'-2}$ are pairwise twin in $G$. Thus $\mathcal{P}'$ satisfies (ii) and we are finished by our inductive hypothesis.

\noindent{\bf Case 2.} Suppose $p=p_k$ is even. Then, since $x_2$ and $y_{m-p+2}$ are twin in $G$, Lemma~\ref{FigureOfEight2} can be applied to obtain a repacking $\mathcal{P}'$ of $\mathcal{P}$. Either we are finished, or the reduced leave of $\mathcal{P}'$ is a $(p',q')$-chain where $p'=m-p+4$ and $q'=2p+q-m-4$. We give this chain and, below it, a relabelling of its vertices.
$$\arraycolsep=0pt%
\begin{array}{ccccccccccccccccccc}
  (&x_1, \:&\: x_2, \:&\:y_{m-p+1}, \:&\: y_{m-p}, \:&\: \ldots, \:&\: y_1, \:&\: c&)  \cdot  (&c, &\: x_{p-1}, \:&\: x_{p-2}, \:&\: \ldots, \:&\: x_3, \:&\: y_{m-p+2}, \:&\: y_{m-p+3}, \:&\: \ldots, \:&\: y_{q-1}&) \\
  (&\:x'_1, \:&\: x'_2, \:&\: x'_3, \:&\: x'_4, \:&\: \ldots, \:&\: x'_{p'-1}, \:&\: c &)  \cdot (& c, \:&\: y'_1, \:&\: y'_2, \:&\: \ldots, \:&\: y'_{p-3}, \:&\: y'_{p-2}, \:&\: y'_{p-1}, \:&\: \ldots, \:&\: y'_{q'-1}\:&)
\end{array}$$
Note that $p'=p_{k-1}$ and $p'$ is odd. Since $x'_1=x_1$ and $\{y'_3,y'_5,\ldots, y'_{m-p'+1} \} =\{x_3,x_5,\ldots, x_{p-3}\} $, the vertices $x'_1,y'_3,y'_5,\ldots,y'_{m-p'+1}$ are pairwise twin in $G$. Similarly, since
$\{y'_2,y'_4,\ldots,y'_{m-p'+2} \} =\{x_4,x_6,\ldots,x_{p-2}\}\cup\{y_{m-p+2}\}$, the vertices $y'_2,y'_4,\ldots,y'_{m-p'+2}$ are pairwise twin in $G$. Thus $\mathcal{P}'$ satisfies (i) and we are finished by our inductive hypothesis.
\end{proof}

%%%%%%%%%%%%%%%%%%%%%%%%%%%%%%%%%%%%%%%%%%%%%%%%%%%%%%%%%%%%%%%%%%%%%%%%%%%%%%%%%%%%%%%%%%
%%%%%%%%%%%%%%%%%%%%%%%%%%%%%%%%%%%%%%%%%%%%%%%%%%%%%%%%%%%%%%%%%%%%%%%%%%%%%%%%%%%%%%%%%%

\begin{lemma}\label{Rearrange_Local}
Let $G$ be a graph and let $M$ be a list of integers. Let $m$, $p$ and $q$ be positive integers with $m$ odd, $m\geq p$, $p+q-m\geq 3$ and $q \geq 5$. Suppose there exists an $(M)$-packing $\mathcal{P}$ of $G$ whose reduced leave is a $(p,q)$-chain $(x_1,x_2,\ldots,x_{p-1},y_0) \cdot (y_0,y_1,\ldots,y_{q-1})$ such that $y_0$ and $y_{q-2}$ are twin in $G$. Then there exists a repacking of $\mathcal{P}$ whose reduced leave is either
\begin{itemize}
	\item
a $(p+2,q-2)$-chain containing the $(q-2)$-cycle $(y_0,y_1,\ldots,y_{q-3})$; or
	\item
the $(p,q)$-chain $(x_1,x_2,\ldots,x_{p-1},y_0)\cdot(y_0,y_{q-1},y_{q-2},y_1,y_2,\ldots,y_{q-3})$.
\end{itemize}
\end{lemma}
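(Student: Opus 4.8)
The plan is to derive the dichotomy in the conclusion from a single application of Lemma~\ref{CycleSwitchGeneral} to the twin vertices $y_0$ and $y_{q-2}$, distinguishing the two outcomes by the terminus of the switch. Writing $A_1=(x_1,\ldots,x_{p-1},y_0)$ and $A_2=(y_0,y_1,\ldots,y_{q-1})$ for the two cycles of the chain, I would first record the neighbourhoods in the leave $L$: since the two cycles meet only in the link vertex $y_0$, we have $\N_L(y_0)=\{x_1,x_{p-1},y_1,y_{q-1}\}$ and $\N_L(y_{q-2})=\{y_{q-3},y_{q-1}\}$. The hypothesis $q\ge 5$ guarantees that $y_1$, $y_{q-3}$, $y_{q-1}$ and $y_0$ are distinct, so that $y_{q-1}$ is the only common neighbour and the set partitioned by Lemma~\ref{CycleSwitchGeneral} is exactly $\{x_1,x_{p-1},y_1,y_{q-3}\}$.

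I would then perform the $(y_0,y_{q-2})$-switch with origin $y_{q-3}$, which is legitimate because $y_{q-3}\in\N_L(y_{q-2})\setminus\N_L(y_0)$ lies in this set. Its terminus is whatever vertex $y_{q-3}$ is paired with, and so is one of $x_1$, $x_{p-1}$ or $y_1$; in each case the switch toggles precisely the edges $y_0y_{q-3}$, $y_{q-2}y_{q-3}$ and the two edges joining $y_0$ and $y_{q-2}$ to the terminus, and I would identify the new reduced leave $L'$ by tracing its cycles. If the terminus is $y_1$, the cycle $A_1$ is untouched while $A_2$ is replaced by $\pi(A_2)=(y_0,y_{q-1},y_{q-2},y_1,\ldots,y_{q-3})$, where $\pi=(y_0\,y_{q-2})$; this is exactly the $(p,q)$-chain of the second conclusion. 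If the terminus is $x_1$ (the case $x_{p-1}$ being symmetric), then deleting $y_0x_1$ and $y_{q-2}y_{q-3}$ and inserting $y_{q-2}x_1$ and $y_0y_{q-3}$ leaves $y_0$ as the unique vertex of degree four, so $L'$ splits at $y_0$ into the $(q-2)$-cycle $(y_0,y_1,\ldots,y_{q-3})$ and the $(p+2)$-cycle $(y_0,x_{p-1},\ldots,x_1,y_{q-2},y_{q-1})$; this is the $(p+2,q-2)$-chain of the first conclusion, and here the role of $q\ge 5$ is precisely to ensure that $q-2\ge 3$ so that this shorter cycle is genuine.

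Since the terminus is forced to be one of these three vertices, exactly one of the two stated conclusions must hold, which finishes the proof. The argument itself is short; the only real work is the edge-tracing in the $x_1$ case, where one must confirm that after the toggle every vertex other than $y_0$ has degree two, that $y_0$ has degree four, and that the two cycles meeting at $y_0$ have the claimed lengths $p+2$ and $q-2$. The main subtlety to keep in mind is that Lemma~\ref{CycleSwitchGeneral} does not let us choose the partner of $y_{q-3}$: we control only the origin, not the terminus, so the proof must cover all three possible termini, and the ``either/or'' of the statement is exactly the split between the terminus lying in $\{x_1,x_{p-1}\}$ and the terminus being $y_1$.
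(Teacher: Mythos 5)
Your proposal is correct and follows exactly the paper's argument: a single $(y_0,y_{q-2})$-switch with origin $y_{q-3}$, with the two conclusions distinguished by whether the terminus is $y_1$ or lies in $\{x_1,x_{p-1}\}$. The neighbourhood computation and edge-tracing you supply are just a more explicit version of what the paper leaves implicit.
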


\begin{proof}
Perform the $(y_0,y_{q-2})$-switch with origin $y_{q-3}$ (note that $y_0$ and $y_{q-2}$ are twin in $G$ and that $q \geq 5$). If the terminus of the switch is $y_1$, then the reduced leave of the resulting packing is the $(p,q)$-chain $(x_1,x_2,\ldots,x_{p-1},y_0)\cdot(y_0,y_{q-1},y_{q-2},y_1,y_2,\ldots,y_{q-3})$. Otherwise the terminus of the switch is $x_1$ or $x_{p-1}$ and in either case the leave of the resulting packing is a $(p+2,q-2)$-chain containing the $(q-2)$-cycle $(y_0,y_1,\ldots,y_{q-3})$.
\end{proof}

\begin{lemma}\label{2PureEdgesOddCycles}
Let $U$ and $V$ be sets with $U\subseteq V$ and $|U|,|V|$ odd, and let $M$ be a list of integers. Let $m$, $p$ and $q$ be positive integers with $m$ odd and $m, p+q-m\geq 3$.  Suppose there exists an $(M)$-packing $\mathcal{P}$ of $K_V-K_U$ whose reduced leave $L$ is a $(p,q)$-chain such that each cycle of $L$ contains exactly one pure edge and the link vertex of $L$ is in $V\setminus U$ if $3\in\{m,p+q-m\}$. Then there exists a repacking of $\mathcal{P}$ whose reduced leave is the edge-disjoint union of an $m$-cycle and a $(p+q-m)$-cycle.
\end{lemma}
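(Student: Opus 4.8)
The plan is to reduce everything to the ``figure of eight'' lemmas, above all Lemma~\ref{FigureOfEight3}. The starting point is a structural observation: since $K_V-K_U$ has no edge between two vertices of $U$, no cycle of the leave can contain two consecutive vertices of $U$. Combined with the hypothesis that each cycle of $L$ has exactly one pure edge, this forces each cycle to alternate between $U$ and $V\setminus U$ apart from its single pure edge; counting vertices then shows each such cycle has odd length, so $p$ and $q$ are both odd and the target lengths $m$ and $p+q-m$ are both odd as well. It is worth noting at the outset that ``edge-disjoint union of an $m$-cycle and a $(p+q-m)$-cycle'' also permits the two cycles to share a vertex, so it suffices to produce a repacking whose reduced leave is an $(m,p+q-m)$-chain, which is exactly the form the figure of eight lemmas deliver.

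First I would relabel so that the shorter of the two chain cycles is the $p$-cycle; when $\min(p,q)\le m$ this gives $m\ge p$ as required by Lemma~\ref{FigureOfEight3}, and since $p$ is odd the relevant hypothesis is its case (i). Because $p+q-m\ge 3$ gives $m-p+2\le q-1$, the conditions there constrain only the vertices within distance $m-p+2$ of the link vertex $c$ along the $q$-cycle, together with the neighbour $x_1$ of $c$ on the $p$-cycle. If the pure edge of the $q$-cycle is none of $cy_1,y_1y_2,\ldots,y_{m-p+1}y_{m-p+2}$, then $c,y_1,\ldots,y_{m-p+2}$ alternate in part, so the even-indexed and the odd-indexed $y_i$ are each pairwise twin; and if the pure edge of the $p$-cycle is not $cx_1$, then $x_1$ lies in the part opposite to $c$, which is precisely the common part of $y_3,y_5,\ldots,y_{m-p+1}$. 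Thus, once the pure edges are correctly positioned, all the twin conditions of Lemma~\ref{FigureOfEight3}(i) hold automatically and that lemma completes the argument.

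The bulk of the work is therefore the \emph{positioning of the pure edges}. Reversing the direction of traversal of a cycle gives two candidate locations for its pure edge, symmetric about $c$, and in most cases one choice pushes the pure edge of the $q$-cycle past $y_{m-p+2}$ while keeping the pure edge of the $p$-cycle off $cx_1$. When no direction choice suffices (which happens only when $q$ is small relative to $m-p$), I would instead apply Lemma~\ref{Rearrange_Local}, whose switch needs only $q\ge 5$ and that $c$ and $y_{q-2}$ be twin (the latter being supplied by the alternation structure), to relocate the pure edge of the $q$-cycle or to trade two units of length between the cycles, iterating through its ``reorder'' alternative as necessary. The same tool handles the remaining regime $\min(p,q)>m$, in which neither cycle can serve as a base for Lemma~\ref{FigureOfEight3}: here I would drive one cycle length down to $m$ by repeated use of Lemma~\ref{Rearrange_Local} (applied with an auxiliary value of its parameter, since its conclusion does not involve $m$), producing an $(m,p+q-m)$-chain directly.

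Finally, the boundary cases $3\in\{m,p+q-m\}$ need separate care, and this is exactly what the extra hypothesis --- that the link vertex lies in $V\setminus U$ --- is for: a $3$-cycle of the leave must consist of two vertices of $V\setminus U$ and one of $U$ (it cannot contain two consecutive vertices of $U$, and an all-pure triangle would carry three pure edges), so forming the length-$3$ output cycle requires a vertex of $V\setminus U$ suitably placed at the link. I expect the \emph{main obstacle} to be precisely this pure-edge bookkeeping: ensuring in every case --- including $\min(p,q)>m$ and the length-$3$ endpoints --- that an alternating segment of the correct length can be brought adjacent to the link vertex while respecting the twin requirements needed to invoke Lemmas~\ref{FigureOfEight3} and~\ref{Rearrange_Local} at each step.
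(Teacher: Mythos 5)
Your overall route is the paper's: show that $p$ and $q$ are odd, bring the pure edges into position by reversing orientations and by iterating Lemma~\ref{Rearrange_Local} (treating its $(p+2,q-2)$-chain outcome as progress towards $p=m$), and finish with Lemma~\ref{FigureOfEight3}(i); your verification of the twin hypotheses via the alternation of parts along the cross-edge segments is exactly the paper's Case~1, and your reading of the link-vertex hypothesis for the case $p+q-m=3$ is also what the paper uses (there it surfaces as a parity condition on the position of the pure edge during the Lemma~\ref{Rearrange_Local} iteration). The one genuine problem is your treatment of the regime $\min(p,q)>m$: Lemma~\ref{Rearrange_Local} does not let you choose which of its two outcomes occurs, so you cannot ``drive one cycle length down to $m$'' with it --- you may be handed the reordered $(p,q)$-chain at every step, and nothing forces the length-trading alternative to happen. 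Fortunately that regime should never arise: the conclusion is symmetric under interchanging $m$ and $p+q-m$, so in addition to $p\leq q$ you may assume $m\geq p+q-m$, which gives $p\leq\frac{p+q}{2}\leq m$; this is precisely the paper's opening normalisation, and with it your argument coincides with the paper's proof.
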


\begin{proof}
We can assume without loss of generality that $m\geq p+q-m$ and that $p \leq q$. Note that this implies $p\leq m$. Since each cycle of $L$ must contain an even number of cross edges, $p$ and $q$ are odd. If $p=m$, then we are finished immediately, so we can assume that $p \leq m-2$. We will show that we can obtain a repacking of $\mathcal{P}$ whose reduced leave is either a $(p+2,q-2)$-chain or the edge-disjoint union of an $m$-cycle and a $(p+q-m)$-cycle. This will suffice to complete the proof, because by iteratively applying this procedure we will eventually obtain a repacking of $\mathcal{P}$ whose reduced leave is the edge-disjoint union of an $m$-cycle and a $(p+q-m)$-cycle.

\noindent{\bf Case 1.} Suppose that $L$ can be labelled as $(x_1,x_2,\ldots,x_{p-1},y_0)\cdot(y_0,y_1,\ldots, y_{q-1})$ so that $y_0x_1$ is not a pure edge and $y_ry_{r+1}$ is a pure edge (subscripts modulo $q$) for an integer $r$ such that $m-p+2 \leq r \leq q-1$. Then the hypotheses of Lemma~\ref{FigureOfEight3}(i) are satisfied and we can apply it to obtain a repacking of $\mathcal{P}$ whose reduced leave is
the edge-disjoint union of an $m$-cycle and a $(p+q-m)$-cycle.

\noindent{\bf Case 2.} Suppose that $L$ cannot be labelled as in Case 1. Without loss of generality we can label $L$ as $(x_1,x_2,\ldots,x_{p-1},y_0)\cdot(y_0,y_1,\ldots, y_{q-1})$ so that $y_0x_1$ is not a pure edge and $y_ry_{r+1}$ is a pure edge (subscripts modulo $q$) for an integer $r$ such that $\frac{q-1}{2} \leq r \leq q-1$, $r$ is even if $y_0 \in V \setminus U$, and $r$ is odd if $y_0 \in U$. It must be that $r\leq m-p+1$, for otherwise we would be in Case 1.
Then we can iteratively apply Lemma~\ref{Rearrange_Local} to obtain a repacking of $\mathcal{P}$ whose reduced leave $L'$ is either a $(p+2,q-2)$-chain or a $(p,q)$-chain which can be labelled  $(x'_1,x'_2,\ldots,x'_{p-1},y'_0) \cdot (y'_0,y'_1,\ldots, y'_{q-1})$ so that $y'_0x'_1$ is not a pure edge, and $y'_{r'}y'_{r'+1}$ is a pure edge (subscripts modulo $q$), where $r'$ is the element of $\{m-p+2,m-p+3\}$ such that $r' \equiv r \mod{2}$. Note that $r' \leq q-1$ because if $p+q-m \geq 4$ then $m-p+3 \leq q-1$, and if $p+q-m=3$ then $y_0 \in V \setminus U$, $r$ is even and $r'=m-p+2=q-1$. If $L'$ is a $(p+2,q-2)$-chain then we are finished, and if $L'$ is a $(p,q)$-chain then we can proceed as we did in Case 1.
\end{proof}

\begin{lemma}\label{2PureEdgesEvenCycles}
Let $U$ and $V$ be sets with $U\subseteq V$ and $|U|,|V|$ odd, and let $M$ be a list of integers.  Let $m$, $p$ and $q$ be positive integers with $m$ odd and $m, p+q-m\geq 3$. Suppose there exists an $(M)$-packing $\mathcal{P}$ of $K_V-K_U$ whose reduced leave $L$ is a $(p,q)$-chain such that one cycle in $L$ contains no pure edges, the other contains exactly two pure edges, and the link vertex of $L$ is in $V\setminus U$ if $3\in\{m,p+q-m\}$. Then there exists a repacking of $\mathcal{P}$ whose reduced leave is the edge-disjoint union of an $m$-cycle and a $(p+q-m)$-cycle.
\end{lemma}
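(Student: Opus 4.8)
The plan is to reduce to Lemma~\ref{2PureEdgesOddCycles}. First I would record the parity constraints forced by the hypotheses. In $K_V-K_U$ there are no edges within $U$, so every vertex of $U$ lying on a cycle is flanked by two cross edges; hence each cycle has an even number of cross edges, and the number of pure edges on a cycle therefore has the same parity as its length. The cycle of $L$ with no pure edges thus has even length and the cycle with exactly two pure edges also has even length, so both $p$ and $q$ are even. Consequently $p+q-m$ is odd, and the two target cycles, of lengths $m$ and $p+q-m$, both have odd length. Since repacking preserves the total number of pure edges, which is $2$, and an odd cycle must carry an odd number of pure edges, the desired $m$-cycle and $(p+q-m)$-cycle must each contain exactly one pure edge. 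This is exactly the output that Lemma~\ref{2PureEdgesOddCycles} produces from a chain in which each cycle carries one pure edge.

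So the aim is to first produce, by a single switch (or a short sequence of switches), a repacking of $\mathcal{P}$ whose reduced leave is a $(p',q')$-chain with $p'+q'=p+q$ in which each cycle carries exactly one pure edge, and then apply Lemma~\ref{2PureEdgesOddCycles}. Write $L=A\cdot B$, where $A$ is the cycle with no pure edges and $B$ the cycle with two pure edges, meeting at the link vertex $c$. Since $A$ consists entirely of cross edges it alternates between $U$ and $V\setminus U$ and in particular contains a vertex $w\in V\setminus U$; I would take $w=c$ when $c\in V\setminus U$. Let $xy$ be a pure edge of $B$ with $x,y\in V\setminus U$, chosen as close to $c$ as possible. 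Because $x$ and $w$ both lie in $V\setminus U$ they are twin in $K_V-K_U$, so I would perform the $(x,w)$-switch with origin $y$. The toggled edge $xy$ (pure) is replaced by $wy$ (again pure, as $w,y\in V\setminus U$), which relocates one pure edge from $B$ onto the formerly pure-edge-free cycle $A$; the terminus, one of the remaining neighbours of $x$ or $w$ in $L$, dictates how the two cycles are re-formed.

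I would then check, over the few possible termini, that the reduced leave of the resulting repacking is a $2$-chain of total length $p+q$ carrying one pure edge in each of its two cycles, so that Lemma~\ref{2PureEdgesOddCycles} applies and completes the proof. The main obstacle is precisely this bookkeeping: I must verify that the switch yields a genuine $2$-chain rather than merging the two cycles into a single cycle or creating a ring, that the resulting pure-edge distribution is $(1,1)$ rather than again $(0,2)$, and---most delicately---that in the boundary cases where $3\in\{m,p+q-m\}$ the new link vertex lies in $V\setminus U$, as demanded by the hypothesis of Lemma~\ref{2PureEdgesOddCycles}. The choices of $w=c$ when $c\in V\setminus U$ and of the pure edge $xy$ nearest $c$ are designed to keep $c$, or another $V\setminus U$ vertex, as the link vertex in exactly these small cases; confirming this while enumerating the possible termini is where the genuine work lies.
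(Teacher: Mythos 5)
Your reductions at the start are correct and agree with the paper: both $p$ and $q$ are even, and the goal is to reach a $2$-chain with one pure edge per cycle so that Lemma~\ref{2PureEdgesOddCycles} applies. But the mechanism you propose for getting there --- an $(x,w)$-switch between an endpoint $x$ of a pure edge of $B$ and a vertex $w$ of the pure-edge-free cycle $A$, with origin $y$ --- does not work as stated, and the verification you defer is exactly where it breaks. Concretely: (a) if the terminus is the other $L$-neighbour of $x$ on $B$, the switch simply replaces $x$ by $w$ in $B$, so the new leave consists of $A$ and a cycle $B'$ meeting $A$ in the \emph{two} vertices $w$ and $c$; this is a $2$-ring, not a $2$-chain, and Lemma~\ref{2PureEdgesOddCycles} does not apply to it (rings are only handled much later, in Lemma~\ref{2Cycles}). (b) If the terminus is a neighbour of $w$ on $A$, you do get a $2$-chain with link vertex $c$, but the two new cycles are formed by splicing one $c$--$w$ arc of $A$ to one $c$--$x$ arc of $B$, and the second pure edge of $B$ can perfectly well lie on the same arc as the relocated pure edge $wy$; then the distribution is again $(0,2)$ and you have made no progress. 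Choosing $xy$ ``nearest $c$'' does not control where the other pure edge lands, and there is also no control over the lengths of the two new cycles.

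The paper's proof is engineered to avoid both problems. It first uses Lemma~\ref{Rearrange_Local} (Case~2 of the proof) to reposition the two pure edges $y_ry_{r+1}$, $y_sy_{s+1}$ of the $q$-cycle so that $r\leq m-2$ and $s\geq m-p+1$. It then switches two vertices $x_{m-t}$ and $y_t$ (with $t=\max(r+1,m-p+1)$) that are joined \emph{through the link vertex} by a path of length exactly $m$ containing exactly one pure edge --- this is also what makes them twin. The point of this choice is that one possible outcome of the switch closes off that $m$-path into the desired $m$-cycle directly, and the only other outcome is a $(p+2t-m,q+m-2t)$-chain whose link vertex is still $c$ and which, by the positioning of $r$ and $s$ relative to $t$, has exactly one pure edge in each cycle, so Lemma~\ref{2PureEdgesOddCycles} finishes. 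The two ideas your sketch is missing are the preprocessing step that controls the positions of the pure edges, and the choice of switching pair so that \emph{every} terminus yields a usable configuration rather than a ring or an unimproved chain.
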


\begin{proof}
We can assume without loss of generality that $m\geq p+q-m$ and that a $p$-cycle in $L$ contains no pure edges. Since each cycle of $L$ must contain an even number of cross edges, $p$ and $q$ are even.

\noindent{\bf Case 1.} Suppose that $L$ can be labelled as $(x_1,x_2,\ldots,x_{p-1},y_0)\cdot(y_0,y_1,\ldots, y_{q-1})$ so that $y_ry_{r+1}$ and $y_sy_{s+1}$ are pure edges (subscripts modulo $q$) for integers $r$ and $s$ such that $0 \leq r < s \leq q-1$, $r \leq m-2$ and $s \geq m-p+1$. Observe that, in particular, such a labelling is always possible when $q=4$ (any labelling with $r < s$ and $s\in\{2,3\}$ will suffice, because then $r\leq 2< m-2$ since $m\geq p+4-m$ and $m-p+1\leq 2\leq s$ since $p+4-m\geq 3$). Let $x_0=y_0$ and
$t=\max(r+1,m-p+1)$. Consider the vertices $x_{m-t}$ and $y_t$. Note that $1 \leq m-t \leq p-1$ because $r \leq m-2$, $p \geq 3$ and $t \geq m-p+1$, and that $r < t \leq s$ because $t \geq r+1$, $r<s$ and $s \geq m-p+1$. Since $r +1\leq t \leq s$, there is exactly one pure edge in the $m$-path $[x_{m-t},x_{m-t-1},\ldots,x_1,y_0,y_1,\ldots,y_t]$ and hence $x_{m-t}$ and $y_t$ are twin in $K_V-K_U$. Let $L'$ be the reduced leave of the repacking of $\mathcal{P}$ obtained by performing the $(x_{m-t},y_t)$-switch with origin $x_{m-t-1}$.
If the terminus of the switch is not $y_{t-1}$, $L'$ is the edge-disjoint union of an $m$-cycle and a $(p+q-m)$-cycle and we are finished. If the terminus of the switch is $y_{t-1}$, then $L'$ is a $(p+2t-m,q+m-2t)$-chain with one pure edge in each cycle and whose link vertex is in $V\setminus U$ if $3\in\{m,p+q-m\}$, and we can apply Lemma \ref{2PureEdgesOddCycles} to complete the proof.

\noindent{\bf Case 2.} Suppose that $L$ cannot be labelled as in Case 1. From our comments in Case 1 we may assume $q \geq 6$.  We will show that we can obtain a repacking of $\mathcal{P}$ whose reduced leave either satisfies the conditions of Case 1 or is a $(p+2,q-2)$-chain in which a $(p+2)$-cycle contains no pure edges. Since any reduced leave which is a $(p+q-4,4)$-chain with exactly two pure edges in which a $(p+q-4)$-cycle contains no pure edges must fall into Case 1, repeating this procedure will eventually result in a repacking of $\mathcal{P}$ whose reduced leave satisfies the conditions of Case 1. We can then proceed as we did in Case 1 to complete the proof.

Without loss of generality we can label $L$ as $(x_1,x_2,\ldots,x_{p-1},y_0)\cdot(y_0,y_1,\ldots, y_{q-1})$ so that $y_ry_{r+1}$ and $y_sy_{s+1}$ are pure edges (subscripts modulo $q$) for integers such that $0 \leq r < s \leq q-1$ and $r \leq \frac{q}{2}$. Because $r \leq \frac{q}{2}$ and $\frac{q}{2} \leq m-2$ (note that $m \geq \frac{p+q}{2} \geq \frac{q+4}{2}$), it must be that $s < m-p+1$, for otherwise we would be in Case 1. So we can repeatedly apply Lemma~\ref{Rearrange_Local} to obtain a repacking of $\mathcal{P}$ whose reduced leave $L'$ is either a $(p+2,q-2)$-chain in which a $(p+2)$-cycle contains no pure edges or a $(p,q)$-chain which can be labelled  $(x'_1,x'_2,\ldots,x'_{p-1},y'_0) \cdot (y'_0,y'_1,\ldots, y'_{q-1})$ so that $y'_ry'_{r+1}$ and $y'_{s'}y'_{s'+1}$ are pure edges for integers $r'$ and $s'$ such that $0 \leq r' < s' \leq q-1$ and $s' \in \{m-p+1,m-p+2\}$ (note that $m-p+2 \leq q-1$ since $p+q-m \geq 3$). Observe that in the latter case $L'$ satisfies the conditions of Case 1.
\end{proof}

\begin{lemma}\label{2PureEdges}
Let $U$ and $V$ be sets with $U\subseteq V$ and $|U|,|V|$ odd, and let $M$ be a list of integers.  Let $m$, $p$ and $q$ be positive integers with $m$ odd, and $m, p+q-m\geq 3$. Suppose there exists an $(M)$-packing of $K_V-K_U$ whose reduced leave $L$ is a $(p,q)$-chain such that $L$ contains exactly two pure edges and the link vertex of $L$ is in $V\setminus U$ if $3\in\{m,p+q-m\}$. Then there exists a repacking of $\mathcal{P}$ whose reduced leave is the edge-disjoint union of an $m$-cycle and a $(p+q-m)$-cycle.
\end{lemma}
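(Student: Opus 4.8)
The plan is to reduce the statement to the two preceding lemmas by a short case analysis on how the two pure edges of $L$ are distributed between its two constituent cycles. Since $L$ is the edge-disjoint union of a $p$-cycle and a $q$-cycle, and each pure edge lies in exactly one of these two cycles, the two pure edges of $L$ must be distributed in one of only two ways: either each of the two cycles contains exactly one pure edge, or one cycle contains both pure edges and the other contains none. These two cases are exhaustive precisely because $L$ has exactly two pure edges in total, and this is the only point in the argument that requires any verification.

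First I would dispose of the case in which each cycle of $L$ contains exactly one pure edge. Here the hypotheses of Lemma~\ref{2PureEdgesOddCycles} hold verbatim: the conditions on $U$, $V$, $m$, $p$, $q$ and on the link vertex lying in $V \setminus U$ when $3 \in \{m, p+q-m\}$ are identical to those assumed here. Applying that lemma immediately produces a repacking of the given packing whose reduced leave is the edge-disjoint union of an $m$-cycle and a $(p+q-m)$-cycle, as required. In the remaining case one cycle of $L$ contains no pure edges and the other contains exactly two pure edges, which is precisely the hypothesis of Lemma~\ref{2PureEdgesEvenCycles}; applying it yields the desired repacking.

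There is no genuine obstacle in this proof: the substantive work of actually constructing the switches that merge the chain into an $m$-cycle and a $(p+q-m)$-cycle has already been carried out in Lemmas~\ref{2PureEdgesOddCycles} and~\ref{2PureEdgesEvenCycles}. It is worth noting that the two cases correspond exactly to the two possible parities of $p$ and $q$. Since each cycle of $L$ must contain an even number of cross edges, a cycle with exactly one pure edge has odd length while a cycle with zero or two pure edges has even length; hence the first case forces $p$ and $q$ both odd and the second forces them both even, which is why these two cases are handled by the separate lemmas for odd and for even cycle lengths. Combining the two cases completes the proof.
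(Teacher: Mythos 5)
Your proof is correct and is essentially identical to the paper's: both split into the two cases according to whether the pure edges lie one in each cycle or both in the same cycle, and invoke Lemma~\ref{2PureEdgesOddCycles} or Lemma~\ref{2PureEdgesEvenCycles} respectively. The additional parity remark is accurate but not needed.
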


\begin{proof}
If each cycle of $L$ contains exactly one pure edge, then we can apply Lemma \ref{2PureEdgesOddCycles} to complete the proof. If one cycle in $L$ contains no pure edges and the other contains exactly two pure edges, then we can apply Lemma \ref{2PureEdgesEvenCycles} to complete the proof.
\end{proof}

\section{Packings whose leaves are $t$-chains}\label{tChainSec}

In this section we use Lemma \ref{2PureEdges} to prove an analogous result for chains with more than two cycles, namely Lemma \ref{2Cycles}. Given a packing whose reduced leave is an $s$-chain that contains two pure edges and satisfies certain other properties, Lemma \ref{2Cycles} allows us to find a repacking whose reduced leave is the edge-disjoint union of two cycles of specified lengths. Lemmas \ref{ReducePathLength} and \ref{2Paths} are used only in order to prove Lemma \ref{2Cycles}, while Lemma \ref{Deg4AndIsolate} will also be used in Section \ref{JoiningSec}. We will need the following additional definitions for chains and rings.

\begin{definition} For sets $U$ and $V$ with $U \subseteq V$, an $s$-chain that is a subgraph of $K_V-K_U$ is \emph{good} if $s=2$ or if $s \geq 3$ and
\begin{itemize}
\item one end cycle of the chain contains at least one pure edge and has its link vertex in $V \setminus U$; and
\item each internal cycle of the chain has one link vertex in $V \setminus U$ and one link vertex in $U$.
\end{itemize}
\end{definition}

\begin{definition} For sets $U$ and $V$ with $U \subseteq V$, an $s$-ring that is a subgraph of $K_V-K_U$ is \emph{good} if either
\begin{itemize}
	\item
$s$ is even, and each of the ring cycles has one link vertex in $U$ and one link vertex in $V \setminus U$; or
	\item
$s$ is odd, one ring cycle has both link vertices in $V \setminus U$ and contains at least one pure edge, and each other ring cycle has one link vertex in $U$ and one link vertex in $V \setminus U$.
\end{itemize}
\end{definition}

\begin{lemma}\label{Deg4AndIsolate}
Let $U$ and $V$ be sets with $U\subseteq V$ and $|U|,|V|$ odd, and suppose that $L$ is a subgraph of $K_V-K_U$ such that $L$ contains exactly two pure edges and each vertex of $L$ has positive even degree.
\begin{itemize}
    \item[(i)]
If $|E(L)| \leq 2(|U|+1)$ and $U$ contains a vertex of degree at least $4$ in $L$, then there is a vertex $y$ in $U$ such that $y \notin V(L)$.
    \item[(ii)]
If $|E(L)| \leq 2\min(|U|+2,|V|-|U|)$ and $S$ is an element of $\{U,V \setminus U\}$ such that $S$ contains either at least two vertices of degree $4$ in $L$ or at least one vertex of degree at least $6$ in $L$, then there is a vertex $y$ in $S$ such that $y \notin V(L)$.
    \item[(iii)]
If $|E(L)| \leq 2\min(|U|+2,|V|-|U|)$ and $L$ contains either at least two vertices of degree $4$ or at least one vertex of degree at least $6$, then there are twin vertices $x$ and $y$ in $K_V-K_U$ such that $\deg_L(x) \geq 4$ and $y \notin V(L)$.
\end{itemize}
\end{lemma}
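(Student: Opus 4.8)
The plan is to prove the three parts by a counting argument based on the degree-sum of $L$ together with the parity structure imposed by the hypothesis that $L$ has exactly two pure edges and that every vertex has positive even degree. The key observation is the following. Since $L$ has exactly two pure edges, each of which lies between two vertices of $V \setminus U$, and every other edge of $L$ is a cross edge (having at least one endpoint in $U$), I can split the degree sum $\sum_{x \in V(L)} \deg_L(x) = 2|E(L)|$ according to whether vertices lie in $U$ or in $V \setminus U$. The essential parity fact I would establish first is that every vertex of $U$ has even degree in $L$ and every edge incident to a vertex of $U$ is a cross edge, so $\sum_{x \in U \cap V(L)} \deg_L(x)$ equals the number of cross edges counted with appropriate multiplicity. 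This lets me translate an upper bound on $|E(L)|$ into an upper bound on how many vertices of $U$ (respectively $V \setminus U$) can possibly appear in $V(L)$.

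For part~(i), I would argue as follows. Suppose toward a contradiction that every vertex of $U$ lies in $V(L)$, so $|U \cap V(L)| = |U|$. Each vertex of $U$ in $V(L)$ has degree at least $2$, and the hypothesis supplies one vertex of $U$ with degree at least $4$; hence $\sum_{x \in U} \deg_L(x) \geq 2|U| + 2$. Since the two pure edges contribute nothing to this sum and every other edge is a cross edge with at most one endpoint in $U$ (it could have two, but those are counted once each), I would bound $|E(L)|$ from below in terms of this degree sum and the two pure edges. The arithmetic should yield $|E(L)| \geq |U| + 2$, which combined with the hypothesis $|E(L)| \leq 2(|U|+1)$ needs to be sharpened. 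The cleaner route is to count cross edges: the number of cross edges equals $|E(L)| - 2$, and each cross edge contributes at least $1$ to $\sum_{x \in U} \deg_L(x)$, so $\sum_{x \in U} \deg_L(x) \leq 2(|E(L)|-2)$ at most, while the lower bound $2|U|+2$ forces $|U| \leq |E(L)| - 3 \leq 2|U|-1$; I would verify the constant so the strict contradiction emerges, concluding that some vertex of $U$ is missing from $V(L)$.

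Parts~(ii) and~(iii) would follow the same template. For part~(ii), the hypothesis now provides either two degree-$4$ vertices or one degree-$\geq 6$ vertex inside the chosen side $S$, so $\sum_{x \in S \cap V(L)} \deg_L(x) \geq 2|S \cap V(L)| + 4$ if all of $S$ lay in $V(L)$; I would handle the two cases $S = U$ and $S = V \setminus U$ separately, since on the $V \setminus U$ side the two pure edges do contribute to the degree sum and the relevant bound is $|E(L)| \leq 2(|V|-|U|)$, whereas on the $U$ side the pure edges contribute nothing and the bound $|E(L)| \leq 2(|U|+2)$ applies. In each case the surplus of $4$ in the degree sum, against the cardinality bound, forces a missing vertex in $S$. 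Part~(iii) is then essentially a corollary: the global hypothesis on $L$ guarantees that at least one of the two sides $S \in \{U, V \setminus U\}$ satisfies the hypothesis of part~(ii) (this needs a short check that the high-degree vertices, possibly split across the two sides, concentrate enough mass on one side, or that a single degree-$\geq 6$ vertex alone suffices), so part~(ii) supplies a vertex $y \in S$ with $y \notin V(L)$; taking $x$ to be a high-degree vertex on the same side $S$ gives twin vertices $x,y$ (twin because both lie in the same part of $K_V - K_U$) with $\deg_L(x) \geq 4$ and $y \notin V(L)$.

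The main obstacle I anticipate is getting the constants exactly right in the counting, particularly the asymmetry between the two sides caused by the two pure edges: on the $V \setminus U$ side those two edges add to the degree sum and tighten the count, while on the $U$ side they are invisible, which is precisely why the two bounds $|U|+2$ and $|V|-|U|$ appear and why the minimum of the two is taken in parts~(ii) and~(iii). I would need to track carefully whether cross edges with both endpoints counted (which cannot happen, since a cross edge has at most one endpoint in $U$ by definition) or edges internal to $V \setminus U$ beyond the two pure edges (which also cannot happen, since any such edge would be a third pure edge) affect the bookkeeping; confirming these structural impossibilities up front is what makes the degree-sum inequalities clean.
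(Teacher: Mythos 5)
Your counting framework for parts (i) and (ii) is essentially the paper's, but the bookkeeping you sketch for (i) does not close: the inequality $\sum_{x\in U}\deg_L(x)\leq 2(|E(L)|-2)$ is too weak, since combined with the lower bound $2|U|+2$ it only yields $|U|\leq 2|U|-1$, which is no contradiction. What is needed --- and what you correctly identify only in your final paragraph --- is the exact identity $\sum_{x\in V(L)\cap U}\deg_L(x)=|E(L)|-2$ (each cross edge has exactly one endpoint in $U$, because $K_V-K_U$ has no edges inside $U$), together with its companion $\sum_{x\in V(L)\setminus U}\deg_L(x)=|E(L)|+2$. With these identities in hand, (i) and (ii) go through exactly as in the paper: assuming $U\subseteq V(L)$ (resp.\ $S\subseteq V(L)$) forces the relevant sum to be at least $2|U|+2$ (resp.\ $2|S|+4$), contradicting the stated bound on $|E(L)|$.

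The genuine gap is in part (iii). You assert that the hypothesis of (iii) guarantees that one of the two sides satisfies the hypothesis of (ii), i.e.\ that the high-degree vertices ``concentrate enough mass on one side.'' This fails in exactly one configuration: $L$ has precisely two vertices of degree $4$, one in $U$ and one in $V\setminus U$, no vertex of degree at least $6$, and every other vertex of degree $2$. There neither side contains two degree-$4$ vertices or a degree-$\geq 6$ vertex, so (ii) is inapplicable and the case needs a separate argument, which your plan does not supply. The paper's argument is: if no admissible twin pair $x,y$ exists, then both $U\subseteq V(L)$ and $V\setminus U\subseteq V(L)$ must hold (otherwise the degree-$4$ vertex on the deficient side, paired with a vertex of that side missing from $V(L)$, would serve as $x,y$); the two degree identities then force $|E(L)|-2=2|U|+2$ and $|E(L)|+2=2(|V|-|U|)+2$ simultaneously, whence $|V|=2|U|+2$, contradicting the hypothesis that $|V|$ is odd. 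This step genuinely uses the parity of $|V|$, which your plan never invokes, so the missing case is not merely a matter of ``getting the constants right.''
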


\begin{proof}
Let $l= |E(L)|$. Because $L$ contains exactly two pure edges, we have
$$\medop\sum_{x \in V(L) \cap U}\deg_L(x) = l-2 \mbox{\qquad and \qquad}  \medop\sum_{x \in V(L) \setminus U}\deg_L(x) = l+2.$$

\noindent{\bf Proof of (i).} Suppose that $l \leq 2(|U|+1)$ and $U$ contains a vertex of degree at least $4$ in $L$. Suppose for a contradiction that $U \subseteq V(L)$. Then we have $l-2 = \sum_{x \in V(L) \cap U}\deg_L(x) \geq 2|U|+2$ since every vertex of $L$ in $U$ has degree at least 2. This contradicts $l \leq 2(|U|+1)$.

\noindent{\bf Proof of (ii).} Suppose that $l \leq 2\min(|U|+2,|V|-|U|)$ and $S$ is an element of $\{U,V \setminus U\}$ such that $S$ contains either at least two vertices of degree $4$ in $L$ or at least one vertex of degree at least $6$ in $L$. Suppose for a contradiction that $S \subseteq V(L)$. Then we have $\medop\sum_{x \in V(L) \cap S}\deg_L(x) \geq 2|S|+4$ since every vertex of $L$ in $S$ has degree at least 2. So, if $S=U$, then $l-2 \geq 2|U|+4$, contradicting $l \leq 2(|U|+2)$. If $S=V \setminus U$, then $l+2 \geq 2(|V|-|U|)+4$, contradicting $l \leq 2(|V|-|U|)$.

\noindent{\bf Proof of (iii).} Because we have proved (ii), it only remains to show that if $L$ contains two vertices of degree $4$, one in $U$ and one in $V \setminus U$, and every other vertex of $L$ has degree $2$, then there are twin vertices $x$ and $y$ in $K_V-K_U$ such that $\deg_L(x) \geq 4$ and $y \notin V(L)$. Suppose otherwise. Then it must be the case that $V(L)=V$, $l-2=2|U|+2$ and $l+2=2(|V|-|U|)+2$. But then $l=2|U|+4$ and $l=2(|V|-|U|)$, so $|V|=2|U|+2$ which contradicts the fact that $|V|$ is odd.
\end{proof}

\begin{lemma}\label{ReducePathLength}
Let $U$ and $V$ be sets with $U\subseteq V$ and $|U|,|V|$ odd, and let $M$ be a list of integers. Let $p$ and $s$ be positive integers such that $p\geq 5$ is odd and $s\geq 2$. Suppose there exists an $(M)$-packing $\mathcal{P}$ of $K_V-K_U$ whose reduced leave $L$ is a good $s$-chain that has a decomposition $\{P,L-P\}$ into two paths such that $P$ has length $p$ and each path contains exactly one pure edge and has both end vertices in $V \setminus U$. Suppose further that $P$ has a subpath $P_0=[x_0,\ldots,x_r]$ such that $2 \leq r \leq p-1$, $x_0$ is an end vertex of $P$, $P_0$ contains no pure edge, and $\deg_L(x_{r-1})=\deg_L(x_r)=2$. Then there is a repacking of $\mathcal{P}$ whose reduced leave $L'$ is a good $s$-chain that has a decomposition $\{P',L'-P'\}$ into two paths such that $P'$ has length $p-2$, each path contains exactly one pure edge and has both end vertices in $V \setminus U$, and $P'$ contains a pure edge in an end cycle of $L'$ with link vertex in $V \setminus U$ if $P$ contains a pure edge in an end cycle of $L$ with link vertex in $V \setminus U$.
\end{lemma}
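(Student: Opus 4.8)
The plan is to obtain $L'$ from $L$ by a single application of Lemma~\ref{CycleSwitchGeneral} that ``folds'' a two-edge piece of $P$ lying in the pure-edge-free part of $P_0$ onto the complementary path $L-P$, thereby shortening $P$ by exactly $2$ while leaving its end vertices, its unique pure edge, and the end cycle of $L$ carrying that pure edge untouched.

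First I would record the structure forced by the hypotheses. Since $L$ has all even degrees and $\{P,L-P\}$ is a decomposition into two open paths, at every vertex the degree in $P$ and the degree in $L-P$ have the same parity; hence the two paths have the same pair of end vertices, and in particular $x_0$ has degree $2$ in $L$, with one incident edge on $P$ (namely $x_0x_1$) and one on $L-P$. Because $P_0$ contains no pure edge, no two consecutive vertices of $P_0$ lie in $V\setminus U$; as $x_0\in V\setminus U$ this forces $x_1\in U$ and, more usefully, pins down the $U$/$(V\setminus U)$ pattern near $x_{r-1},x_r$. Since $P$ has exactly one pure edge and $P_0$ has none, that pure edge lies in the segment $[x_r,\ldots,x_p]$, which is what will let me establish the last clause of the statement: the fold will be confined to $\{x_{r-2},\ldots,x_{r+1}\}$, away from the end cycle of $L$ containing the pure edge of $P$.

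Next I would choose the twin pair for the switch by a short case analysis on whether $x_{r-1}$ and $x_r$ lie in $U$ or in $V\setminus U$, exploiting that $\deg_L(x_{r-1})=\deg_L(x_r)=2$. These degree conditions guarantee that the neighbours of $x_{r-1}$ are exactly $x_{r-2},x_r$ and those of $x_r$ are exactly $x_{r-1},x_{r+1}$, so a switch involving these vertices stays local and its terminus is forced into a small set. The no-pure-edge condition then ensures that in each case a usable twin pair is available: for instance, if $x_{r-1}\in V\setminus U$ then both edges $x_{r-2}x_{r-1}$ and $x_{r-1}x_r$ of $P_0$ force $x_{r-2},x_r\in U$, so $\{x_{r-2},x_r\}$ is a twin pair. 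I would apply Lemma~\ref{CycleSwitchGeneral} to the chosen pair with origin $x_{r+1}$ (and, where the chosen pivot is a link vertex, analyse which neighbour is the terminus), using Lemma~\ref{Rearrange_Local} as an auxiliary reversal step should a suitable vertex first need to be brought into position. For the terminus that reattaches the piece containing $x_{r-1},x_r$ to $L-P$, the reduced leave $L'$ is obtained with $P'=[x_0,\ldots,x_{r-2},x_{r+1},\ldots,x_p]$ of length $p-2$; for the other, reversing, terminus one recovers a relabelling of the same chain, to which the argument is reapplied.

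The final, and most delicate, step is the verification that $L'$ meets every requirement of the conclusion. Because the switch toggles only four edges among $\{x_{r-2},x_{r-1},x_r,x_{r+1}\}$ and possibly one link neighbour, and because it is a repacking, $L'$ has the same edge count and the same two pure edges as $L$, both of which lie outside the affected region; thus each of $P'$ and $L'-P'$ again contains exactly one pure edge and keeps its end vertices $x_0,x_p\in V\setminus U$, which are not moved. I would then check that $L'$ is again a good $s$-chain: the switch preserves the cycle lengths, does not introduce a link vertex of the wrong $U$/$(V\setminus U)$ type, and leaves intact the end cycle containing $P$'s pure edge, which is exactly what yields the last clause. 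I expect the main obstacle to be precisely this bookkeeping --- confirming in every colour case (and for every position of the pivot relative to the link vertices) that the local fold returns a \emph{good} $s$-chain whose two paths each have both end vertices in $V\setminus U$ and exactly one pure edge --- rather than the choice of switch itself; the degree-$2$ hypotheses on $x_{r-1},x_r$ and the pure-edge-free hypothesis on $P_0$ are the tools that make each case go through.
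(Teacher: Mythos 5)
Your choice of twin pair $\{x_{r-2},x_r\}$ is the same one the paper uses, and your observation that the alternation of $U$ and $V\setminus U$ along the pure-edge-free path $P_0$ makes them twin is correct. But the switch on this pair can only shorten $P$ if $x_{r-2}$ is a link vertex of the chain, i.e.\ $\deg_L(x_{r-2})=4$: only then does $x_{r-2}$ have a neighbour in $L-P$ that can serve as a terminus which ``reattaches the piece containing $x_{r-1},x_r$ to $L-P$''. You never arrange for this. If $\deg_L(x_{r-2})=2$ (which happens whenever $P_0$ sits inside a single cycle of the chain away from its link vertices), then $\N_L(x_{r-2})=\{x_{r-3},x_{r-1}\}$, $\N_L(x_r)=\{x_{r-1},x_{r+1}\}$, and the partition supplied by Lemma~\ref{CycleSwitchGeneral} consists of the single pair $\{x_{r-3},x_{r+1}\}$; the unique available switch just reverses the segment $[x_{r-2},x_{r-1},x_r]$ inside $P$, leaving $P$ at length $p$. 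Your fallback --- ``one recovers a relabelling of the same chain, to which the argument is reapplied'' --- then has no decreasing quantity and never terminates; the passing appeal to Lemma~\ref{Rearrange_Local} does not repair this, as that lemma addresses a different configuration.

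The paper closes exactly this gap with two ingredients you are missing. It inducts on $|E(P_0)|$. In the base case $r=2$ no switch is performed at all: since $\deg_L(x_1)=\deg_L(x_2)=2$, both vertices are isolated in $L-P$, so one simply transfers the edges $x_0x_1,x_1x_2$ from $P$ to $L-P$, and $x_2\in V\setminus U$ by the alternation you noted, so $P'=[x_2,\ldots,x_p]$ already has the required properties (with $L'=L$). For $r\geq 3$ one may assume $P_0$ is a shortest subpath satisfying the hypotheses, which forces $\deg_L(x_{r-2})=4$ --- otherwise $[x_0,\ldots,x_{r-1}]$ would be shorter and still valid --- and this is precisely what guarantees the switch a non-reversing terminus; when the reversing terminus does occur, the resulting path has a valid subpath $P_0'$ of length $r-1$, so the induction applies. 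Without the minimality observation and the switch-free base case, your single-switch plan fails on the degree-$2$ configurations.
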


\begin{proof}
We prove the result by induction on the length of $P_0$. If $|E(P_0)|=2$, then $\{P',L-P'\}$ where $P'=[x_2,\ldots,x_p]$ is a decomposition of $L$ with the required properties. So we can assume that $|E(P_0)| \geq 3$. By induction we can assume that $P_0$ is the shortest subpath of $P$ satisfying the required conditions. Because $r\geq 3$ this implies $\deg_L(x_{r-2})=4$. Label the vertices in $V(P) \setminus V(P_0)$ so that $P=[x_0,\ldots,x_p]$.

The vertices $x_r$ and $x_{r-2}$ are twin in $K_V-K_U$ because they are joined by a path of length 2 containing no pure edge. Let $L'$ be the reduced leave of the repacking of $\mathcal{P}$ obtained by performing the $(x_r,x_{r-2})$-switch with origin $x_{r-3}$. Note that $L'$ is a good $s$-chain irrespective of the terminus of the switch. If the terminus of the switch is not $x_{r+1}$, then $\{P',L'-P'\}$ where $P'=[x_0,x_1,\ldots,x_{r-3},x_r,x_{r+1},\ldots,x_p]$ is a decomposition of $L'$ with the required properties. If the terminus of the switch is $x_{r+1}$, then $\{P',L'-P'\}$ where $P'=[x_0,x_1,\ldots,x_{r-3},x_r,x_{r-1},x_{r-2},x_{r+1},x_{r+2},\ldots,x_p]$ is a decomposition of $L'$ into two paths such that $P'$ has length $p$ and each path contains exactly one pure edge and has both end vertices in $V \setminus U$. Further $P'$ has the subpath $P'_0=[x_0,\ldots,x_{r-3},x_{r},x_{r-1}]$ and we know that $x_0$ is an end vertex of $P'$, $P'_0$ contains no pure edge, and $\deg_{L'}(x_{r})=\deg_{L'}(x_{r-1})=2$. Thus, because $|E(P'_0)|=r-1$, we are finished by our inductive hypothesis.
\end{proof}

\begin{lemma}\label{2Paths}
Let $U$ and $V$ be sets with $U\subseteq V$ and $|U|,|V|$ odd, and let $M$ be a list of integers. Let $m_1$, $m_2$ and $s$ be positive integers such that $m_1$ and $m_2$ are odd, $m_1, m_2 \geq s$ and $s\geq 3$. Suppose there exists an $(M)$-packing $\mathcal{P}$ of $K_V-K_U$ whose reduced leave is a good $s$-chain of size $m_1+m_2$ that contains exactly two pure edges. Then there exists a repacking of $\mathcal{P}$ whose reduced leave is a good $s$-chain that has a decomposition into an $m_1$-path and an $m_2$-path such that each path contains exactly one pure edge.
\end{lemma}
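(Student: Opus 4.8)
The plan is to exploit the rigid structure of two-path decompositions of a chain, to set up an initial decomposition with the pure edges correctly placed, and then to correct the path lengths to $(m_1,m_2)$ using Lemma~\ref{ReducePathLength}.

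First I would record the structural backbone. Write the good $s$-chain as $A_1\cdot A_2\cdots A_s$ with link vertices $c_1,\dots,c_{s-1}$, where $c_i$ is shared by $A_i$ and $A_{i+1}$, and set $a_i=|A_i|$. In \emph{any} decomposition of the chain into two paths, no link vertex can be an end vertex of a path: a link vertex has degree $4$, so it must be passed through by both paths (each using two of its four edges). Moreover no single path can use both arcs of a cycle between its link vertices, as it would revisit a link vertex. Hence both paths pass through every link vertex, both paths span the whole chain, and in each cycle $A_i$ one path uses one of the two arcs determined by the special vertices of $A_i$ (its links, together with a chosen end vertex in each end cycle) while the other path uses the complementary arc. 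Since a path entering an end cycle through its unique link vertex cannot return, each path must terminate in each end cycle; as the four path-endpoints fall on degree-$2$ vertices and pair up, the two paths in fact \emph{share} their two end vertices, one in $A_1$ and one in $A_s$. Their lengths are $\sum_i|R_i|$ and $\sum_i(a_i-|R_i|)$, where $R_i$ is the arc assigned to the first path; in particular each path has length at least $s$, matching the hypothesis $m_1,m_2\geq s$. Finally, since a path joining two vertices of $V\setminus U$ crosses the cut between $U$ and $V\setminus U$ an even number of times, a path with both ends in $V\setminus U$ and exactly one pure edge has odd length, consistent with $m_1,m_2$ being odd.

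Next I would build an initial decomposition into two paths, each containing exactly one pure edge and having both ends in $V\setminus U$. Here goodness is essential: the special end cycle $A_1$ contains at least one pure edge, so the two pure edges cannot both lie in a single internal cycle. Thus either both pure edges lie in $A_1$, or they lie in two distinct cycles, one of which is $A_1$. I would choose end vertices $\alpha\in A_1$ and $\omega\in A_s$ in $V\setminus U$ (these exist, since every cycle of $K_V-K_U$ has at least as many vertices in $V\setminus U$ as in $U$, so $A_s$ has a non-link vertex in $V\setminus U$, and $A_1$ has one since it contains a pure edge) and then assign arcs so that exactly one pure edge falls in each path: when the pure edges lie in distinct cycles, I assign the pure-edge-containing arc of each of those cycles to a different path; when both lie in $A_1$, I use the freedom in choosing $\alpha$ to separate them into the two arcs of $A_1$. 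Every other cycle contributes only cross edges, so the pure-edge count is exactly one per path. Taken together with the remark on parity, this gives a decomposition of the leave (a repacking of $\mathcal P$ by the identity) into an odd-length pair $(p,q)$ with $p+q=m_1+m_2$ and one pure edge in each path.

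Finally I would adjust $(p,q)$ to $(m_1,m_2)$. The shared end vertices give a free choice of arc length in $A_1$ and $A_s$, and Lemma~\ref{ReducePathLength} lets me transfer two units of length from the longer path to the shorter one while preserving that $L$ is a good $s$-chain, that each path has one pure edge, and that the ends lie in $V\setminus U$; composing these repackings yields the required repacking of $\mathcal P$. Since the current lengths and the targets are all odd and sum to $m_1+m_2$, repeatedly shortening whichever path exceeds its target reaches $(m_1,m_2)$, and no path ever drops below its target $\geq s$. The hard part will be verifying the hypotheses of Lemma~\ref{ReducePathLength} throughout, namely that the path being shortened has a pure-edge-free subpath from an end vertex terminating at two consecutive degree-$2$ vertices; such a subpath needs an edge joining two degree-$2$ vertices, which requires some arc of length at least $3$. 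This genuinely fails in short-cycle regimes — for instance when every ring cycle is a triangle, every non-pure edge meets a link vertex and no three consecutive degree-$2$ vertices occur, so no reduction is available. The way around this is that in exactly these regimes the direct arc choices already realize the target: when the $a_i$ are small, the free end-cycle choices make every admissible odd value in $[s,\,m_1+m_2-s]$ attainable for $p$, so $p=m_1$ is reached with no reduction at all. I would therefore structure the proof as a case analysis, applying Lemma~\ref{ReducePathLength} to fine-tune lengths whenever a sufficiently long arc is present and otherwise fixing the arc lengths directly, in both cases retaining the separation of the two pure edges arranged above.
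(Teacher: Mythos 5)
Your overall strategy --- build an initial two-path decomposition with one pure edge per path and both ends in $V\setminus U$, then repeatedly transfer length two between the paths via Lemma~\ref{ReducePathLength} --- is exactly the paper's strategy, and your structural analysis of two-path decompositions of a chain (shared endpoints, one per end cycle; complementary arcs in each cycle) is correct. The gap is in the terminal step, which you yourself flag as ``the hard part'' and then resolve only by assertion. Your claimed dichotomy is: either Lemma~\ref{ReducePathLength} applies, or the cycles are so short that ``the free end-cycle choices make every admissible odd value in $[s,\,m_1+m_2-s]$ attainable for $p$.'' That fallback claim is false in general. First, the arc lengths contributed by \emph{internal} cycles are fixed by the positions of the link vertices, so if some internal cycle has both arcs of length at least $2$ (e.g.\ a $4$-cycle with opposite link vertices, or an $8$-cycle with antipodal link vertices), the minimum attainable $p$ strictly exceeds $s$ and the value $m_1=s$ is unreachable by arc choices alone. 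Second, the ``free'' choice of the end-cycle arc lengths is not free: when both pure edges lie in $A_1$, separating them forces the endpoint $\alpha$ into at most two positions, pinning $x_1$; and when a pure edge lies in a fixed arc of an internal cycle, the parity/assignment constraints further restrict which arcs can go to which path. So neither branch of your dichotomy is proved, and the branches as you describe them do not cover all cases.

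The paper closes this gap differently, and you would need something equivalent. It normalizes the initial decomposition ($P$ is the longer path if $m_1\geq s+1$; $P$ contains a pure edge in an end cycle with link vertex in $V\setminus U$ if $m_1=s$), and then splits on whether some cycle meets $P$ in at least three edges. If so, a pure-edge-free subpath ending at two consecutive degree-$2$ vertices is exhibited and Lemma~\ref{ReducePathLength} applies (note each application is a genuine repacking that restructures the leave, so the hypotheses must be re-checked after every step --- another point your sketch glosses over). If not, a counting argument using goodness and the parity of cross edges along arcs shows that at most three cycles can meet $P$ in two edges, whence $p\leq s+3$; combined with $p\geq m_1+2$ and parity this forces $p=s+3$ and $m_1=s+1$, and deleting the two end vertices of $P$ (i.e.\ shifting the shared endpoints by one inside each end cycle) produces the $m_1$-path directly. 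Without this counting argument, or a correct substitute for it, your proof does not go through.
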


\begin{proof} Suppose without loss of generality that $m_1 \leq m_2$, and let $L$ be the reduced leave of $\mathcal{P}$. Note that $|E(L)|=m_1+m_2$. Because $L$ is good and contains exactly two pure edges, we can find some decomposition $\{P,L-P\}$ of $L$ into two odd length paths each of which has both end vertices in $V \setminus U$ and contains exactly one pure edge. Without loss of generality we can assume that $P$ is at least as long as $L-P$ if $m_1 \geq s+1$ and that $P$ contains a pure edge in an end cycle of $L$ with link vertex in $V \setminus U$ if $m_1 = s$. Let $p$ be the length of $P$ and note that in each case $p\geq m_1$ because $p \geq \frac{m_1+m_2}{2} \geq m_1$ if $m_1 \geq s+1$ and $p \geq s = m_1$ if $m_1=s$.  We are finished if $p=m_1$, so we may assume $p\geq m_1+2$.

\noindent{\bf Case 1.} Suppose each cycle of $L$ contains at most two edges of $P$. Then exactly $p-s$ cycles of $L$ contain two edges of $P$ and the rest contain one edge of $P$. Because $L$ is good and both end vertices of $P$ are in $V \setminus U$, if $C$ is a cycle of $L$ that contains two edges of $P$, then either
\begin{itemize}
    \item
$C$ is an internal cycle of $L$ and $C$ contains the pure edge of $P$; or
    \item
$C$ is an end cycle of $L$ with link vertex in $U$ and $C$ contains the pure edge of $P$; or
    \item
$C$ is an end cycle of $L$ with link vertex in $V \setminus U$ and $C$ does not contain the pure edge of $P$.
\end{itemize}
From this it follows that $p-s \leq 3$. Note that $p \geq m_1+2 \geq s+2$ and hence that $p \in \{s+2,s+3\}$. If $p=s+2$, then $m_1=s$. But then $P$ contains a pure edge in an end cycle of $L$ with link vertex in $V \setminus U$ by its definition and it can be seen that no cycle of $L$ contains two edges of $P$, contradicting $p=s+2$. So it must be that $p=s+3$ and thus $m_1=s+1=p-2$ because $m_1$ and $p$ are odd. Because $p=s+3$, $P$ contains two edges of each end cycle of $L$ and two edges, including a pure edge, of some internal cycle of $L$. Let $P'$ be the path obtained from $P$ by deleting both end vertices of $P$ and their incident edges. Then $\{P',L-P'\}$ is a decomposition of $L$ into an $m_1$-path and an $m_2$-path such that each path contains exactly one pure edge.

\noindent{\bf Case 2.} Suppose there is a cycle $C$ in $L$ such that $C \cap P$ is a path of length at least 3. Let $P_0=[x_0,\ldots,x_r]$ be a subpath of $P$ such that $x_0$ is an end vertex of $P$, $P_0$ contains no pure edge, and $P_0$ contains exactly two edges in $C \cap P$. If $C \cap P$ contains no pure edge or if $C \cap P$ has length at least 4, then it is easy to see such a subpath exists. If $C \cap P$ has length 3 and contains a pure edge, then the facts that $L$ is good and that the end vertices of $P$ are in $V \setminus U$ imply that $C$ is an end cycle of $L$ with link vertex in $V \setminus U$ and hence that such a subpath exists. So we can apply Lemma~\ref{ReducePathLength} to obtain a repacking of $\mathcal{P}$ whose reduced leave $L'$ is a good $s$-chain that has a decomposition $\{P',L'-P'\}$ into two paths such that $P'$ has length $p-2$, each path contains exactly one pure edge and has both end vertices in $V \setminus U$, and $P'$ contains a pure edge in an end cycle of $L'$ with link vertex in $V \setminus U$ if $m_1 = s$. It is clear that by repeating this procedure we will eventually obtain a repacking of $\mathcal{P}$ whose reduced leave either has a decomposition into an $m_1$-path and an $m_2$-path such that each path contains exactly one pure edge or has a decomposition into odd length paths which satisfies the hypotheses for Case 1. In the former case we are finished and in the latter we can proceed as we did in Case 1.
\end{proof}

\begin{lemma}\label{2Cycles}
Let $U$ and $V$ be sets with $U\subseteq V$ and $|U|,|V|$ odd, and let $M$ be a list of integers. Let $m_1$, $m_2$ and $s$ be positive integers such that $s\geq 2$, $m_1$ and $m_2$ are odd, $m_1,m_2\geq s$, $m_1+m_2\leq 2\min(|U|+2, |V|-|U|)$, and $m_1+m_2\leq 2(|U|+1)$ if $3\in\{m_1,m_2\}$.  Suppose there exists an $(M)$-packing $\mathcal{P}$ of $K_V-K_U$ whose reduced leave has size $m_1+m_2$, contains exactly two pure edges, is either a good $s$-ring or a good $s$-chain that, if $3\in\{m_1,m_2\}$, is not a $2$-chain with link vertex in $U$. Then there exists a repacking of $\mathcal{P}$ whose reduced leave is the edge-disjoint union of an $m_1$-cycle and an $m_2$-cycle.
\end{lemma}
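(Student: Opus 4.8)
The plan is to use repackings to reduce $L$ to a good $2$-chain and then apply Lemma~\ref{2PureEdges}, throughout maintaining the invariant that the reduced leave has exactly two pure edges (recall that repacking preserves the numbers of pure and cross edges). If $L$ is already a good $2$-chain then, by hypothesis, its link vertex lies in $V\setminus U$ whenever $3\in\{m_1,m_2\}$, so Lemma~\ref{2PureEdges}, applied with $m=m_1$ and $p+q-m=m_2$, immediately yields a repacking whose reduced leave is the edge-disjoint union of an $m_1$-cycle and an $m_2$-cycle. Thus I may assume that $L$ is a good $s$-chain with $s\geq 3$ or a good $s$-ring.

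For a good $s$-chain with $s\geq3$ I would first apply Lemma~\ref{2Paths} to obtain a repacking whose reduced leave is a good $s$-chain decomposing into an $m_1$-path $P_1$ and an $m_2$-path $P_2$, each containing exactly one pure edge. Since every vertex of a chain has even degree, splitting it into two paths forces the two paths to share both end vertices, which (as in the way Lemma~\ref{2Paths} is set up) we may take to lie in $V\setminus U$; write $P_1=[a,v_1,\dots,v_{m_1-1},b]$ and $P_2=[a,w_1,\dots,w_{m_2-1},b]$ with $a,b\in V\setminus U$. As $a$ and $b$ are then twin, I would perform the $(a,b)$-switch with origin $v_1$. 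If its terminus is $w_{m_2-1}$, the reduced leave becomes an $m_1$-cycle together with an edge-disjoint $m_2$-cycle; if its terminus is $w_1$, the reduced leave becomes a $2$-chain whose two cycles have lengths $m_1$ and $m_2$ (sharing $b$); in either case we are finished, since a $2$-chain is an edge-disjoint union of its cycles.

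The one remaining possibility, that the terminus is $v_{m_1-1}$, merely reverses $P_1$ and returns two $a$--$b$ paths, and symmetrically an origin on $P_2$ may merely reverse $P_2$. I expect this to be the \emph{main obstacle}: I must rule out an endless alternation of these two ``reversing'' switches. The plan is to show that the partition furnished by Lemma~\ref{CycleSwitchGeneral} cannot force both reversals in every repacking reached --- for instance by exhibiting a monovariant on the packing (such as a measure of how the packing cycles through $a$ and $b$ interleave) that strictly decreases under a reversing switch, or by exploiting the latitude in the decomposition produced by Lemma~\ref{2Paths} to re-choose the shared end vertices. As soon as a favourable terminus is realised, the previous paragraph concludes the proof.

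For a good $s$-ring I would first open it into a good chain and then apply the chain case. Since $|E(L)|=m_1+m_2\le 2\min(|U|+2,|V|-|U|)$ and every ring has at least two link vertices, each of degree $4$, Lemma~\ref{Deg4AndIsolate}(iii) supplies twin vertices $x,y$ with $\deg_L(x)\ge4$ and $y\notin V(L)$, where $x$ is a link vertex. Performing the $(x,y)$-switch that separates the two ring cycles meeting at $x$ opens the ring at $x$ into an $s$-chain on the same edge set, and $x$ can be chosen --- as a link of the distinguished ring cycle in the odd case, or relative to the positions of the two pure edges in the even case --- so that the resulting chain is good. Finally, when $3\in\{m_1,m_2\}$ the extra hypothesis $m_1+m_2\le 2(|U|+1)$ lets me invoke Lemma~\ref{Deg4AndIsolate}(i) to locate an isolated vertex of $U$; dissolving an appropriate link vertex lying in $U$ then guarantees that the constrained end cycle of the opened chain (equivalently, the link vertex of the eventual $2$-chain) lies in $V\setminus U$, keeping every $2$-chain we produce admissible for Lemma~\ref{2PureEdges}.
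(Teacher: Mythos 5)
Your overall architecture (reduce to a good $2$-chain and invoke Lemma~\ref{2PureEdges}, open rings at a link vertex, use Lemma~\ref{Deg4AndIsolate} to keep the $3$-cycle case admissible) matches the paper's, but you have correctly identified the crux of the argument and then left it unresolved. The paper's proof is an induction on $s$, and the point you flag as ``the main obstacle'' is exactly where that induction does its work: when the $(x_0,x_{m_1})$-switch on the two shared end vertices of the $m_1$-path and $m_2$-path has the unfavourable terminus $x_{m_1-1}$, the resulting reduced leave is not just ``the same two paths with $P_1$ reversed'' --- as a chain/ring structure it is a \emph{good $(s-1)$-ring}. (Reattaching $x_1$ to $x_{m_1}$ and $x_{m_1-1}$ to $x_0$ rethreads the two end cycles of the chain into one another; a short computation on a $3$-chain already shows the result is a $2$-ring.) So there is no danger of ``endless alternation'': the bad outcome strictly decreases $s$, and the base case $s=2$ is Lemma~\ref{2PureEdges} (after converting a $2$-ring to a $2$-chain with one switch). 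Your proposed fixes --- a monovariant on how the packing cycles interleave, or re-choosing the shared end vertices --- do not identify the quantity that actually decreases, and as written the chain case of your proof does not terminate.

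A second, related gap is in your ring case: you write that performing the $(x,y)$-switch at a link vertex ``opens the ring at $x$ into an $s$-chain,'' but with cycle switching you choose only the origin, not the terminus. If the terminus lands back in the same ring cycle you get a good $s$-chain, but if it lands elsewhere you get a good $(s-1)$-ring instead; again this is harmless only because the induction on $s$ absorbs it. Once you set up the induction on $s$ with these two observations (chain with bad terminus $\Rightarrow$ good $(s-1)$-ring; ring-opening switch $\Rightarrow$ good $s$-chain or good $(s-1)$-ring), the rest of your argument goes through essentially as in the paper.
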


\begin{proof}
Let $L$ be the reduced leave of $\mathcal{P}$. We first show that the result holds for $s=2$. If $L$ is a $2$-chain, then the result follows by Lemma~\ref{2PureEdges}. If $L$ is a $2$-ring, then it follows from our hypotheses and Lemma \ref{Deg4AndIsolate} that there are twin vertices $x$ and $y$ in $K_V-K_U$ such that $\deg_L(x) \geq 4$ and $y \notin V(L)$, and such that if $3\in\{m_1,m_2\}$ then $x \in U$ (if $3\in\{m_1,m_2\}$, then apply Lemma \ref{Deg4AndIsolate}(i) and otherwise apply Lemma \ref{Deg4AndIsolate}(iii)). Performing an $(x,y)$-switch results in a repacking of $\mathcal{P}$ whose reduced leave is a $2$-chain whose link vertex is in $V \setminus U$ if $3\in\{m_1,m_2\}$ and the result follows by Lemma~\ref{2PureEdges}. So it is sufficient to show, for each integer $s' \geq 3$, that if the result holds for $s=s'-1$ then it holds for $s=s'$.

\noindent{\bf Case 1.} Suppose that $L$ is a good $s'$-chain. By Lemma~\ref{2Paths} we can obtain a repacking of $\mathcal{P}$ whose reduced leave is a good $s'$-chain with a decomposition into paths of length $m_1$ and $m_2$ each containing exactly one pure edge. Let $[x_0,x_1,\ldots,x_{m_1}]$ be the path of length $m_1$. Observe that $x_0$ and $x_{m_1}$ are twin in $K_V-K_U$ because they are joined by an odd length path containing exactly one pure edge, and perform the $(x_0,x_{m_1})$-switch with origin $x_1$.

If the terminus of the switch is not $x_{m_1-1}$, then we obtain a repacking of $\mathcal{P}$ whose reduced leave is the edge-disjoint union of an $m_1$-cycle and an $m_2$-cycle and we are finished. If the terminus of the switch is $x_{m_1-1}$, then we obtain a repacking of $\mathcal{P}$ whose reduced leave is a good $(s'-1)$-ring that contains exactly two pure edges and the result follows by our inductive hypothesis.

\noindent{\bf Case 2.} Suppose that $L$ is a good $s'$-ring. Let $A$ be a ring cycle of $L$ such that $A$ contains a pure edge and if $s'$ is odd then $A$ has both link vertices in $V \setminus U$. Let $x$ and $y$ be twin vertices in $K_V-K_U$ such that $x$ is a link vertex in $A$, $x\in U$ if $s'$ is even, and $y \notin V(L)$. Such a vertex $y$ exists by Lemma \ref{Deg4AndIsolate}(ii) because $|E(L)| \leq 2\min(|U|+2, |V|-|U|)$, $V \setminus U$ contains two vertices of degree 4 in $L$ if $s'$ is odd, and $U$ contains two vertices of degree 4 in $L$ if $s'$ is even (for then $s' \geq 4$). By performing an $(x,y)$-switch with origin in $V(A)$ we obtain a repacking of $\mathcal{P}$ whose reduced leave contains exactly two pure edges, is a good $s'$-chain if the terminus of the switch is also in $V(A)$, and is a good $(s'-1)$-ring otherwise. In the former case we can proceed as in Case 1 and in the latter case the result follows by our inductive hypothesis.
\end{proof}

\section{Merging cycle lengths}\label{JoiningSec}

In this section we use Lemma \ref{2Cycles} to prove Lemma \ref{JoiningLemma}, which is the key result in proving Theorem~\ref{MainTheorem}. Given a cycle decomposition of $K_v-K_u$ that satisfies certain conditions, Lemma \ref{JoiningLemma} allows us to find a new cycle decomposition of $K_v-K_u$ in which some of the shorter cycle lengths have been merged into cycles of length $m$.

\begin{lemma}\label{1Degree4Vertex}
Let $U$ and $V$ be sets with $U\subseteq V$ and $|U|,|V|$ odd, and let $M$ be a list of integers. Let $m_1$, $m_2$, $t$ and $k$ be positive integers such that $m_1$ and $m_2$ are odd, $m_1,m_2 \geq k+t-1$, $m_1+m_2\leq 2\min(|U|+2,|V|-|U|)$, and  $m_1+m_2\leq 2(|U|+1)$ if $3\in\{m_1,m_2\}$. Suppose there exists an $(M)$-packing $\mathcal{P}$ of $K_V-K_U$ with a reduced leave $L$ of size $m_1+m_2$ such that $L$ contains exactly two pure edges and $L$ has exactly $k$ components, $k-1$ of which are cycles and one of which is a good $t$-chain that, if $3\in\{m_1,m_2\}$, is not a $2$-chain with link vertex in $U$. Then there exists a repacking of $\mathcal{P}$ whose reduced leave is the edge-disjoint union of an $m_1$-cycle and an $m_2$-cycle.
\end{lemma}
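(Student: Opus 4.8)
The plan is to merge the $k-1$ cycle components into the good $t$-chain one at a time, reducing the number of components until a single good chain remains, and then to finish with Lemma~\ref{2Cycles}. I argue by induction on $k$. If $k=1$ then $L$ is already a good $t$-chain of size $m_1+m_2$ with exactly two pure edges; since $m_1,m_2\geq k+t-1=t$ and the remaining hypotheses of Lemma~\ref{2Cycles} (including the exclusion of a $2$-chain with link in $U$ when $3\in\{m_1,m_2\}$) hold, that lemma produces the required repacking. For $k\geq 2$ I will exhibit a repacking whose reduced leave again has exactly two pure edges and consists of one good chain together with cycles, but with only $k-1$ components. Each such merge sends $(k,t)\mapsto(k-1,t')$ with $t'\in\{t,t+1\}$, so $k+t-1$ never increases and the bounds $m_1,m_2\geq k+t-1$, $m_1+m_2\leq 2\min(|U|+2,|V|-|U|)$, and $m_1+m_2\leq 2(|U|+1)$ when $3\in\{m_1,m_2\}$ are all inherited; iterating down to $k=1$ and applying Lemma~\ref{2Cycles} then completes the proof.

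Each merge is a single application of Lemma~\ref{CycleSwitchGeneral}. Because $L$ has only two pure edges, every cycle component $C$ (having at least three edges) contains a cross edge and hence a vertex of $U$. Let $B$ be the end cycle of the chain that is \emph{not} its designated pure end and let $\ell_B$ be its link vertex; then $B$ contains a degree-$2$ vertex $x$ of the type opposite to $\ell_B$ (a neighbour of $\ell_B$ on $B$ if $\ell_B\in U$, and a vertex of $B$ in $U$ if $\ell_B\in V\setminus U$, which exists since $B$ contains a cross edge). Choosing $w\in V(C)$ of the same type as $x$, the vertices $x$ and $w$ are twin, and we perform the $(x,w)$-switch with origin a neighbour of $x$. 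If the terminus is the other neighbour of $x$, then $x$ is isolated, $w$ becomes a new link of the type opposite to $\ell_B$, $B$ becomes an internal cycle with one link of each type, and $C$ becomes the new non-pure end, giving a good $(t+1)$-chain. If instead the terminus lies on $C$, then $B$ and $C$ are spliced into a single end cycle still attached to the rest of the chain at $\ell_B$, giving a good $t$-chain. Either way the designated pure end is untouched, the leave remains good with exactly two pure edges, and the number of components drops by one; this settles the inductive step whenever $t\geq 3$.

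The main obstacle, and the reason for the lemma's name, is the case $t=2$: here the link of the $2$-chain is the \emph{only} degree-$4$ vertex of $L$, so Lemma~\ref{Deg4AndIsolate} gives little purchase. A good $2$-chain has no designated pure end, and if its link $\ell$ lies in $V\setminus U$ while neither of its two cycles carries a pure edge, then no good $3$-chain retaining $\ell$ as a link exists, since in a good $3$-chain the unique link in $V\setminus U$ joins the pure end to the internal cycle, which would force one of the cycles meeting at $\ell$ to be the pure end. I therefore plan to dispose of this configuration before merging: if $\ell\in U$, or if $\ell\in V\setminus U$ and one of the two cycles already carries a pure edge, then every merge keeps the leave in a good configuration and eventually yields either a good $3$-chain (continuing the induction) or a good $2$-chain to which Lemma~\ref{2Cycles} applies directly; otherwise I relocate $\ell$ into $U$ by isolating a vertex and switching, so that a pure cycle component can be made the pure end. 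Confirming that the needed isolated vertex exists is the delicate point: using $\sum_{x\in U\cap V(L)}\deg_L(x)=|E(L)|-2$ and $\sum_{x\in (V\setminus U)\cap V(L)}\deg_L(x)=|E(L)|+2$ together with the hypothesis that the only degree-$4$ vertex lies in $V\setminus U$, one must check that the extreme permitted values of $m_1+m_2$ either supply an isolated vertex of the right type or cannot occur; this is exactly where the extra bound $m_1+m_2\leq 2(|U|+1)$ for $3\in\{m_1,m_2\}$ is used.
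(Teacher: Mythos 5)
Your overall strategy --- induct on $k$, absorb the cycle components into the chain one switch at a time while keeping the chain good, then invoke Lemma~\ref{2Cycles} --- is the paper's, and your treatment of $t\geq 3$ (switch a non-link vertex $x$ of the far end cycle, of type opposite to its link vertex, with a same-type vertex $w$ on a cycle component) is correct and matches the paper's Case~1. The gaps are in the $t=2$ analysis, where two configurations are not actually handled. First, when the link vertex $\ell$ lies in $U$, your claim that ``every merge keeps the leave in a good configuration'' fails precisely when both pure edges lie inside the $2$-chain, so that no cycle component carries a pure edge: in the outcome where $x$ is isolated you obtain a $3$-chain whose two end cycles are one of the original cycles (link vertex $\ell\in U$) and the absorbed component $C$ (no pure edge), and neither can serve as the pure end required by the definition of a good $3$-chain. (Relatedly, your merge recipe never specifies that $C$ should be chosen to contain a pure edge when the chain contains at most one; that choice is essential even in the subcases that do work.) The paper devotes a separate case to this configuration, switching the link vertex $\ell$ itself with a vertex of $V(C)\cap U$, which either splices $C$ onto one cycle of the chain or detaches one cycle and leaves a $2$-chain with at most one pure edge, reducing to an earlier case.

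Second, ``relocate $\ell$ into $U$ by isolating a vertex and switching'' is not a well-defined operation: every switch acts on a pair of \emph{twin} vertices, so no vertex ever changes sides of the hole, and an $(\ell,z)$-switch with $z$ outside the leave either splits the $2$-chain into two disjoint cycles or merges it into a single cycle --- neither of which leaves a chain to work with. What the paper actually does when $\ell\in V\setminus U$ and the chain carries no pure edge is a two-step argument: splice a pure-edge cycle component onto an end cycle via a non-link vertex of that end cycle in $V\setminus U$; if the result is a $3$-chain with both links in $V\setminus U$ (hence not good), find, via Lemma~\ref{Deg4AndIsolate}(ii) and the bound $m_1+m_2\leq 2(|V|-|U|)$, a vertex $z\in V\setminus U$ outside the leave and switch it with the link vertex of the non-pure end, which in either outcome yields a good $2$-chain with link in $V\setminus U$ containing a pure edge. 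Your sketch would need to be replaced by an argument of this kind. (Incidentally, the hypothesis $m_1+m_2\leq 2(|U|+1)$ when $3\in\{m_1,m_2\}$ is not consumed at the point you indicate; it is passed on to Lemma~\ref{2Cycles}, which uses it to find an isolated vertex of $U$ when the leave is a $2$-ring.)
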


\begin{proof}
By Lemma~\ref{2Cycles} it is sufficient to show that we can construct a repacking of $\mathcal{P}$ whose reduced leave is a good $s$-chain, for some $s \in \{2,\ldots,k+t-1\}$, that is not a $2$-chain with link vertex in $U$ if $3\in\{m_1,m_2\}$. If $k=1$, then we are finished, so we can assume $k \geq 2$. By induction on $k$, it suffices to show that there is a repacking of $\mathcal{P}$ with a reduced leave $L'$ such that $L'$ has exactly $k-1$ components, one component of $L'$ is a good $t'$-chain for $t' \in \{t,t+1\}$, each other component of $L'$ is a cycle, and a degree $4$ vertex of $L'$ is in $V \setminus U$ if $3\in\{m_1,m_2\}$.

Let $H$ be the component of $L$ which is a good $t$-chain, and let $C$ be a component of $L$ such that $C$ is a cycle and $C$ contains at least one pure edge if $H$ contains at most one pure edge. Let $H_1$ and $H_t$ be the end cycles of $H$ where $H_1$ contains a pure edge if $H$ does and the link vertex of $H_1$ is in $V \setminus U$ if $t \geq 3$.

\noindent{\bf Case 1.} Suppose that either $t \geq 3$ or it is the case that $t=2$, $H_1$ contains a pure edge, and the link vertex of $H$ is in $V \setminus U$. Let $x$ and $y$ be vertices such that $x\in V(H_t)$, $x$ is not a link vertex of $H$, $y\in V(C)$, $x,y\in V \setminus U$ if $t$ is odd, and $x,y\in U$ if $t$ is even. Let $\mathcal{P}'$ be a repacking of $\mathcal{P}$ obtained by performing an $(x,y)$-switch with origin in $V(H_t)$. The reduced leave $L'$ of $\mathcal{P}'$ has exactly $k-1$ components, $k-2$ of which are cycles and one of which is a good $t'$-chain, where $t'=t+1$ if the terminus of the switch is also in $V(H_t)$ and $t'=t$ otherwise. Further, a degree $4$ vertex of $L'$ is in $V \setminus U$ if $3\in\{m_1,m_2\}$.  So we are finished by induction.

\noindent{\bf Case 2.} Suppose that $t=2$ and either $H$ contains exactly one pure edge and has its link vertex in $U$ or $H$ contains no pure edges. Then $C$ contains a pure edge. Let $w$ and $x$ be vertices such that $w\in V(C) \setminus U$, $x\in V(H_1) \setminus U$, and $x$ is not the link vertex of $H$. Let $\mathcal{P}'$ be a repacking of $\mathcal{P}$ obtained by performing a $(w,x)$-switch with origin in $V(H_1)$ and let $L'$ be the reduced leave of $\mathcal{P}'$. If the terminus of this switch is in $C$, then $L'$ has exactly $k-1$ components, $k-2$ of which are cycles and one of which is a $2$-chain, and the link vertex of this chain is in $V \setminus U$ if $3\in\{m_1,m_2\}$. In this case we are finished by induction. Otherwise the terminus of this switch is in $V(H_1)$ and $L'$ has exactly $k-1$ components, $k-2$ of which are cycles and one of which is a $3$-chain $H'$ one of whose end cycles contains a pure edge and has its link vertex in $V \setminus U$. If $H'$ is good, then we are done. Otherwise, it must be that both link vertices of $H'$ are in $V \setminus U$. In this latter case we proceed as follows.

Let $H'_1$ and $H'_3$ be the end cycles of $H'$ such that $H'_1$ has a pure edge.
Let $y,z\in V \setminus U$ be vertices such that $y$ is the link vertex in $V(H'_3)$ and $z \notin V(L')$ (note that $z$ exists by Lemma \ref{Deg4AndIsolate}(ii) because $m_1+m_2\leq 2\min(|U|+2,|V|-|U|)$ and both link vertices of $H'$ are in $V \setminus U$). Let $\mathcal{P}''$ be a repacking of $\mathcal{P}$ obtained from $\mathcal{P}'$ by performing a $(y,z)$-switch with origin in $V(H'_3)$ and let $L''$ be the reduced leave of $\mathcal{P}''$. If the terminus of this switch is not in $V(H'_3)$, then $L''$ has exactly $k-1$ components, $k-2$ of which are cycles and one of which is a $2$-chain whose link vertex is in $V \setminus U$. In this case we are finished by induction. Otherwise, the terminus of this switch is in $V(H'_3)$ and $L''$ has exactly $k$ components, $k-1$ of which are cycles and one of which is a $2$-chain that contains a pure edge and has its link vertex in $V \setminus U$. In this case we can proceed as we did in Case 1.

\noindent{\bf Case 3.} Suppose that $t=2$, $H$ contains two pure edges and the link vertex of $H$ is in $U$. Note that, from our hypotheses, $m_1,m_2 \geq 4$. Let $x$ be the link vertex of $H$ and let $y$ be a vertex in $V(C) \cap U$. Let $\mathcal{P}'$ be a repacking of $\mathcal{P}$ obtained by performing an $(x,y)$-switch with origin in $V(H_2)$ and let $L'$ be the reduced leave of $\mathcal{P}'$. If the terminus of this switch is in $V(C)$, then $L'$ has exactly $k-1$ components, $k-2$ of which are cycles and one of which is a $2$-chain. In this case we are finished by induction. Otherwise the terminus of this switch is in $V(H_2)$ and $L'$ has exactly $k$ components, $k-1$ of which are cycles and one of which is a $2$-chain that contains at most one pure edge and has its link vertex in $U$. In this case we can proceed as we did in Case 2.
\end{proof}

\begin{lemma}\label{MaxComponents}
Let $U$ and $V$ be sets with $U\subseteq V$ and $|U|,|V|$ odd. If $L$ is a subgraph of $K_V-K_U$ such that $L$ contains at most two pure edges, $L$ has one vertex of degree $4$, and each other vertex of $L$ has degree $2$, then $L$ has at most $\left\lfloor\frac{|E(L)|-6}{4}\right\rfloor+1$ components.
\end{lemma}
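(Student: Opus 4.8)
The plan is to reduce the claimed bound to a clean edge-counting inequality and then exploit two structural facts: that every vertex of $L$ has even degree, and that $L$ is a subgraph of a complete graph with a hole (so it has no edge with both ends in $U$). First I would observe that, since the number $c$ of components of $L$ is an integer, the inequality $c \le \lfloor \frac{|E(L)|-6}{4}\rfloor + 1$ is equivalent to $|E(L)| \ge 4c + 2$. The whole proof then amounts to establishing this lower bound on the number of edges.

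Next I would pin down the structure of $L$. Since every vertex has degree $2$ except for a single vertex of degree $4$, each component in which all vertices have degree $2$ is a single cycle, while the unique component $H$ containing the degree-$4$ vertex $w$ is the edge-disjoint union of two cycles meeting only at $w$: deleting $w$ leaves a union of two paths whose four ends are exactly the neighbours of $w$, which $w$ recombines into two cycles through $w$, so that $H$ is a $2$-chain. Consequently $L$ decomposes into exactly $N = c+1$ edge-disjoint cycles $C_1,\dots,C_N$, and $|E(L)| = \sum_{j=1}^{N}|E(C_j)|$.

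The key arithmetic input is a parity observation. Because $K_V-K_U$ has no edge with both ends in $U$, every vertex of $U$ lying on a cycle $C_j$ has both of its incident edges in $C_j$ being cross edges, and each cross edge has exactly one end in $U$; hence the number of cross edges of $C_j$ equals $2\,|V(C_j)\cap U|$ and is even. Therefore the length $|E(C_j)|$ and the number of pure edges of $C_j$ have the same parity. Since each cycle has length at least $3$, any cycle of length exactly $3$ must contain an odd, hence positive, number of pure edges; as $L$ contains at most two pure edges in total and the $C_j$ partition $E(L)$, at most two of the cycles can have length $3$, and every other cycle has length at least $4$.

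Finally I would assemble the count. Writing $|E(L)| = \sum_j |E(C_j)| = 4N + \sum_j\bigl(|E(C_j)|-4\bigr)$, each cycle of length at least $4$ contributes a nonnegative term while each of the at most two cycles of length $3$ contributes $-1$, so $|E(L)| \ge 4N - 2 = 4c + 2$, as required. I expect the only real obstacle to be the structural step, namely justifying cleanly that the degree-$4$ component is a $2$-chain and hence that $L$ splits into exactly $c+1$ cycles; once the cycle decomposition and the per-cycle parity are in hand, the counting is immediate, and the extremal configuration of two triangles sharing a vertex (together with length-$4$ cycles) confirms that the bound is tight.
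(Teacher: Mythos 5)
Your proposal is correct and follows essentially the same route as the paper: decompose $L$ into cycles, use the parity of cross edges to show each $3$-cycle must contain a pure edge (hence at most two triangles), and count edges against the number of cycles, with the degree-$4$ component accounting for two cycles. The paper states the argument more tersely (it only needs that the degree-$4$ component contains \emph{at least} two cycles of the decomposition, rather than your exact $2$-chain identification), but the substance is identical.
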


\begin{proof}
Because each vertex of $L$ has even degree, $L$ has a decomposition $\mathcal{D}$ into cycles. Since there are at most two pure edges in $L$, at most two cycles in $\mathcal{D}$ have length 3 and each other cycle in $\mathcal{D}$ has length at least $4$. Thus $|E(L)| \geq 4(|\mathcal{D}|-2)+6$ which implies $|\mathcal{D}| \leq \left\lfloor\frac{|E(L)|-6}{4}\right\rfloor+2$. At least one component of $L$ contains a vertex of degree 4 and hence contains at least two cycles and each other component of $L$ contains at least one cycle. The result follows.
\end{proof}

\begin{lemma}\label{PickApart}
Let $U$ and $V$ be sets with $U\subseteq V$ and $|U|,|V|$ odd, and let $M$ be a list of integers.
Suppose there exists an $(M)$-packing $\P_0$ of $K_V-K_U$ with a reduced leave $L_0$ such that $|E(L_0)|\leq 2\min(|U|+2,|V|-|U|)$, $L_0$ has exactly two pure edges, and $L_0$ has at least one vertex of degree at least $4$. Then there exists a repacking $\P^{\star}$ of $\P_0$ with a reduced leave $L^{\star}$ such that exactly one vertex of $L^{\star}$ has degree $4$ and every other vertex of $L^{\star}$ has degree $2$.
\end{lemma}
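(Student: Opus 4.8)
The plan is to reduce the leave one vertex at a time: repeatedly peel two edges off a vertex of degree at least $4$ and re-attach them to a vertex currently absent from the leave, steadily lowering the total ``excess degree'' until a single vertex of degree $4$ survives. Before describing the step I would record two invariants. Since $\mathcal{P}^{\star}$ is to be a repacking of $\P_0$, and since a repacking preserves both the number of edges and the number of pure edges of the leave (as noted after the definition of repacking), every reduced leave $L$ arising in the argument satisfies $|E(L)|=|E(L_0)|\leq 2\min(|U|+2,|V|-|U|)$ and contains exactly two pure edges; moreover, composing the switches gives a packing that is again a repacking of $\P_0$. Because $|U|$ and $|V|$ are odd, every vertex of $K_V-K_U$ has even degree ($|V|-|U|$ for a vertex of $U$ and $|V|-1$ for a vertex of $V\setminus U$), and deleting the edges of the cycles of a packing preserves this parity, so every vertex of any reduced leave has positive even degree. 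For such a leave define its excess $e(L)=\sum_{x\in V(L)}(\deg_L(x)-2)=2|E(L)|-2|V(L)|$, a non-negative even integer to which each vertex contributes a non-negative even amount. Hence $L$ has exactly one vertex of degree $4$ and all others of degree $2$ if and only if $e(L)=2$.

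The single reduction step is as follows. Suppose $L$ is a reduced leave with $e(L)>2$. Then $L$ contains either at least two vertices of degree $4$ or at least one vertex of degree at least $6$, so Lemma~\ref{Deg4AndIsolate}(iii) supplies twin vertices $x,y$ in $K_V-K_U$ with $\deg_L(x)\geq 4$ and $y\notin V(L)$. Since $y\notin V(L)$ we have $\N_L(y)=\emptyset$, so the set partitioned by Lemma~\ref{CycleSwitchGeneral} applied to the transposition $(x\,y)$ is exactly $\N_L(x)$, which has even size $\deg_L(x)\geq 4$. Performing the $(x,y)$-switch on any one pair $\{a,b\}$ of the resulting partition yields a repacking whose reduced leave $L'$ differs from $L$ only in that $xa,xb$ are deleted (they were edges of $L$) while $ya,yb$ are added (they were non-edges of $L$). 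Consequently $\deg(x)$ drops by $2$, the vertex $y$ acquires degree $2$, and all other degrees are unchanged, so $e(L')=e(L)-2$.

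Finally I would assemble the iteration. If $e(L_0)=2$ then $L_0$ already has the required shape (its vertex of degree at least $4$ must then have degree exactly $4$), and we take $\mathcal{P}^{\star}=\P_0$. Otherwise $e(L_0)>2$, and repeatedly applying the reduction step produces repackings whose reduced leaves have strictly decreasing, always-even excess. The process halts precisely when the hypothesis $e(L)>2$ of Lemma~\ref{Deg4AndIsolate}(iii) fails; since each step lowers the excess by exactly $2$, the excess cannot be skipped past $2$, so after $\tfrac12(e(L_0)-2)$ steps we obtain a repacking $\mathcal{P}^{\star}$ of $\P_0$ whose reduced leave $L^{\star}$ has $e(L^{\star})=2$, that is, exactly one vertex of degree $4$ and every other vertex of degree $2$.

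The argument is routine once the reduction step is isolated, and there is no genuine obstacle: the two delicate points are merely bookkeeping, namely checking that the hypotheses of Lemma~\ref{Deg4AndIsolate}(iii) survive each iteration (guaranteed by the invariants $|E(L)|=|E(L_0)|$ and ``exactly two pure edges'' together with the equivalence ``$e(L)>2$ iff the degree hypothesis of part (iii) holds''), and confirming that a switch against an absent vertex changes degrees exactly as claimed.
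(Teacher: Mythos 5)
Your proposal is correct and follows essentially the same route as the paper: both iterate an $(x,y)$-switch between a vertex of degree at least $4$ and a vertex absent from the leave, invoking Lemma~\ref{Deg4AndIsolate}(iii) at each step, until the excess degree drops to $2$. Your write-up merely makes explicit the bookkeeping (the excess function and the effect of a switch against an isolated vertex) that the paper leaves implicit.
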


\begin{proof}
Let $d=\frac{1}{2}\sum_{x\in V(L_0)} (\deg_{L_0}(x)-2)$, and construct a sequence $\P_0, \P_1,\ldots, \P_{d-1}$, where for $i \in \{0,\ldots,d-2\}$ $\P_{i+1}$ is a repacking of $\P_i$ obtained from $\P_i$ by  performing an $(x_i,y_i)$-switch where $x_i$ and $y_i$ are twin vertices in $K_V-K_U$ such that the degree of $x_i$ in the reduced leave of $\P_i$ is at least $4$ and $y_i$ is not in the reduced leave of $\P_i$. Such vertices exist by Lemma \ref{Deg4AndIsolate}(iii) since $|E(L_0)|\leq 2\min(|U|+2,|V|-|U|)$ and $i\leq d-2$. Exactly one vertex of the reduced leave of $\P_{d-1}$ has degree $4$ and all its other vertices have degree $2$.
\end{proof}

\begin{lemma}\label{JoiningLemma}
Let $U$ and $V$ be sets with $U\subseteq V$ and $|U|,|V|$ odd, and let $M$ be a list of integers.
Let $m$ be a positive odd integer such that $7 \leq m \leq \min(|U|+2,|V|-|U|-1)$. Let $a_1,\ldots,a_s$ and $b_1,\ldots,b_t$ be lists of integers such that $a_1+\cdots+a_s=m$ and $b_1+\cdots+b_t=m$.
Suppose there exists an $(M)$-packing $\mathcal{P}$ of $K_V-K_U$ with a reduced leave that
contains exactly two pure edges and is the edge-disjoint union of cycles of lengths $a_1,\ldots,a_s,b_1,\ldots,b_t$. Then there exists an $(M,m,m)$-decomposition $\mathcal{D}$ of $K_V-K_U$ containing two $m$-cycles $C'$ and $C''$ such that $\mathcal{D}\setminus \{C',C''\}$ is a repacking of $\mathcal{P}$.
\end{lemma}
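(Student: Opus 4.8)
The plan is to repack $\mathcal{P}$ so that its reduced leave becomes the edge-disjoint union of two $m$-cycles $C'$ and $C''$, and then to take $\mathcal{D}$ to be this repacking together with $C'$ and $C''$. Since a repacking of a repacking is a repacking and $C'\cup C''$ is precisely the leave of the final repacking, this yields $\mathcal{D}\setminus\{C',C''\}$ as a repacking of $\mathcal{P}$ and $\mathcal{D}$ as the desired $(M,m,m)$-decomposition. Let $L$ be the reduced leave of $\mathcal{P}$, so $|E(L)|=2m$ and $L$ has exactly two pure edges. The whole argument is a reduction to Lemma~\ref{1Degree4Vertex} applied with $m_1=m_2=m$ and $t=2$: I want to produce a repacking whose reduced leave consists of a single good $2$-chain together with some disjoint cycles.

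First I would arrange for a reduced leave in which exactly one vertex has degree $4$ and every other vertex has degree $2$. Since $L$ is a union of cycles, every vertex of $L$ has positive even degree. If some vertex of $L$ has degree at least $4$, then, because $|E(L)|=2m\leq 2\min(|U|+2,|V|-|U|)$ (using $m\leq\min(|U|+2,|V|-|U|-1)$), Lemma~\ref{PickApart} directly yields a repacking of $\mathcal{P}$ with such a reduced leave $L^{\star}$. The remaining case is that $L$ is a vertex-disjoint union of its $s+t\geq 2$ cycles. Here I would pick two of these cycles $C_1,C_2$, choose $\alpha\in V(C_1)\cap(V\setminus U)$ and $\beta\in V(C_2)\cap(V\setminus U)$ (each cycle meets $V\setminus U$ in at least two vertices, so these exist and are twin), and repack by replacing $C_1$ with $\pi(C_1)$, where $\pi=(\alpha\beta)$, leaving all other cycles fixed. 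Because $C_1$ and $C_2$ are disjoint, $\pi(C_1)$ and $C_2$ meet only in $\beta$ and are edge-disjoint, so this is a genuine repacking; its reduced leave again has exactly one vertex of degree $4$ (namely $\beta$), all others of degree $2$, with $\alpha$ now isolated and deleted. In either case, in the resulting reduced leave $L^{\star}$ the unique degree-$4$ vertex lies in a component that is a figure-eight, i.e.\ a $2$-chain (and every $2$-chain is good), while all other components are cycles.

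It then remains to check the hypotheses of Lemma~\ref{1Degree4Vertex} for $L^{\star}$ with $k$ equal to its number of components, $t=2$, and $m_1=m_2=m$. Repacking preserves the number of pure edges, so $L^{\star}$ has exactly two pure edges; since $m\geq 7$, the clauses involving $3\in\{m_1,m_2\}$ are vacuous. The size bound $m_1+m_2=2m\leq 2\min(|U|+2,|V|-|U|)$ holds as above. For the condition $m_1,m_2\geq k+t-1=k+1$, I would bound the number of components via Lemma~\ref{MaxComponents}: since $L^{\star}$ has at most two pure edges, one vertex of degree $4$, and all others of degree $2$, it has at most $\lfloor\frac{2m-6}{4}\rfloor+1=\frac{m-1}{2}$ components (using that $m$ is odd), so $k\leq\frac{m-1}{2}\leq m-1$ and hence $m\geq k+1$. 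Applying Lemma~\ref{1Degree4Vertex} then produces a repacking $\mathcal{P}'$ of $\mathcal{P}$ whose reduced leave is the edge-disjoint union of an $m$-cycle $C'$ and an $m$-cycle $C''$, and setting $\mathcal{D}=\mathcal{P}'\cup\{C',C''\}$ finishes the proof.

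I expect the main obstacle to be the disjoint-cycle case. When $L$ has no vertex of degree at least $4$, Lemma~\ref{PickApart} does not apply and $L$ is not of the form required by Lemma~\ref{1Degree4Vertex}, so one must manufacture a chain; the key observation is that flipping a single cycle onto a vertex of another via a twin-transposition welds two disjoint cycles into a good $2$-chain while freeing a vertex, producing exactly the ``one degree-$4$ vertex'' configuration needed. Verifying that this flip is a legitimate repacking (edge-disjointness of $\pi(C_1)$ with the rest) and that the component count stays below $m$ are the two places where the numerical hypotheses $m\geq 7$ and $m\leq\min(|U|+2,|V|-|U|-1)$ are genuinely used; everything else is bookkeeping to match the premises of the earlier lemmas.
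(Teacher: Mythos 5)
Your overall architecture (reduce to Lemma~\ref{1Degree4Vertex} via Lemma~\ref{PickApart} and Lemma~\ref{MaxComponents}, with the bound $k\leq\lfloor\tfrac{2m-6}{4}\rfloor+1=\tfrac{m-1}{2}\leq m-1$) matches the paper's Case 1 exactly, and that part is fine. The gap is in your handling of the case where the reduced leave $L$ is a vertex-disjoint union of cycles. You propose to ``repack by replacing $C_1$ with $\pi(C_1)$, leaving all other cycles fixed,'' and justify this by checking that $\pi(C_1)$ is edge-disjoint from the rest of the leave. But $C_1$ and $C_2$ are components of the \emph{leave}, not cycles of the packing $\mathcal{P}$; a repacking modifies the packing cycles, and the leave is whatever is left over. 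To turn the leave edges $\alpha a,\alpha b$ (where $a,b$ are $\alpha$'s neighbours on $C_1$) into non-leave edges and simultaneously release $\beta a,\beta b$ from the packing, you must alter the packing cycles currently covering $\beta a$ and $\beta b$, and the only tool available for this is Lemma~\ref{CycleSwitchGeneral}. That lemma lets you choose the \emph{origin} of an $(\alpha,\beta)$-switch but not its \emph{terminus}: the partition of $\N_L(\alpha)\cup\N_L(\beta)$ into pairs is handed to you, not chosen by you. Consequently there are two possible outcomes of the switch, and you cannot force the one you want. If the origin and terminus both lie in $C_1$, you indeed get the figure-eight at $\beta$ that you describe; but if the terminus lies in $C_2$, the two cycles instead merge into a single cycle of length $|C_1|+|C_2|$ and the leave is again a disjoint union of cycles with no degree-$4$ vertex, so your reduction to Lemma~\ref{1Degree4Vertex} never gets off the ground.

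This is precisely why the paper's proof runs an induction on $s+t$: in the ``merged cycle'' outcome it recurses on the shorter lists. Note also that to make that recursion legal you must choose the two cycles from the \emph{same} list (the paper takes cycles of lengths $a_1$ and $a_2$), so that after merging, the two lists still each sum to $m$; your arbitrary choice of $C_1,C_2$ would break the inductive hypothesis in that branch. To repair your argument: keep your Case 1 verbatim, but in the disjoint-cycles case perform the $(x,y)$-switch with $x,y\in V\setminus U$ lying in two distinct cycles whose lengths both come from $a_1,\ldots,a_s$ (possible since you may assume $s\geq 2$ when $s+t\geq 3$), accept whichever of the two outcomes occurs, and add an induction on $s+t$ to absorb the merging outcome. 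The base case $s=t=1$ is the trivial one where the leave is already two $m$-cycles.
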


\begin{proof}
Let $L$ be the reduced leave of $\P$. It obviously suffices to find a repacking of $\mathcal{P}$ whose reduced leave is the edge-disjoint union of two $m$-cycles.

We prove the result by induction on $s+t$. If $s=1$ and $t=1$, then the result is trivial. So suppose that $s+t \geq 3$. Assume without loss of generality that $s \geq t$ and note that $s \geq 2$.

\noindent{\bf Case 1.} Suppose that some vertex of $L$ has degree at least $4$. Then by Lemma \ref{PickApart}, there is a repacking $\mathcal{P}'$ of $\mathcal{P}$ with a reduced leave $L'$ such that exactly one vertex of $L'$ has degree $4$ and every other vertex of $L'$ has degree $2$. So one component of $L'$ is a $2$-chain, and any other component of $L'$ is a cycle. Furthermore $L'$ contains at most $\lfloor\frac{2m-6}{4}\rfloor+1$ components by Lemma \ref{MaxComponents} and obviously $m \geq \lfloor\frac{2m-6}{4}\rfloor+2$. Thus, applying Lemma \ref{1Degree4Vertex} with $m_1=m_2=m$ to $\mathcal{P}'$, there is a repacking of $\mathcal{P}$ whose reduced leave is the edge-disjoint union of two $m$-cycles.

\noindent{\bf Case 2.} Suppose that every vertex of $L$ has degree $2$. Then the components of $L$ are cycles of lengths $a_1,\ldots,a_s,b_1,\ldots,b_t$. Let $x$ and $y$ be vertices in $V\setminus U$ such that $x$ and $y$ are in two distinct cycles of $L$ which have lengths $a_1$ and $a_2$ respectively. Let $\mathcal{P}'$ be a repacking of $\mathcal{P}$ obtained by performing an $(x,y)$-switch and let $L'$ be the reduced leave of $\mathcal{P}'$. If the origin and terminus of this switch are in the same cycle, then one vertex of $L'$ has degree $4$ and every other vertex of $L'$ has degree $2$, and we can proceed as we did in Case 1. If the origin and terminus of this switch are in different cycles, then $L'$ is the edge-disjoint union of cycles of lengths $a_1+a_2,a_3,\ldots,a_s,b_1,\ldots,b_t$ (lengths $a_1+a_2,b_1,\ldots,b_t$ if $s=2$) and we can complete the proof by applying our inductive hypothesis.
\end{proof}

\section{Base decompositions}\label{BaseDecompSec}

Our goal in this section is to prove Lemmas \ref{BaseDecomposition} and \ref{BaseDecomposition_9} which provide the ``base'' decompositions of $K_v-K_u$ into short cycles and $m$-cycles to which we apply Lemma \ref{JoiningLemma} in order to prove Theorem \ref{MainTheorem}. Lemma \ref{BaseDecomposition} is used in the case where $m \geq 11$ and Lemma \ref{BaseDecomposition_9} is used when $m=9$. We first require several preliminary results. Lemma \ref{MixedCycles} is a method for decomposing certain graphs into $3$-cycles and $5$-cycles. Theorems \ref{AlspachEven}, \ref{ChouFuHuang} and \ref{Bipartite} are existing results on decomposing the complete graph and the complete bipartite graph into cycles.

We require some additional notation in the remainder of the paper. For a positive integer $v$, let $K^c_v$ denote a graph of order $v$ with no edges and, for a set $V$, let $K^c_V$ denote the graph with vertex set $V$ and no edges. For a non-negative integer $i$, let $x^i$ denote a list containing $i$ entries all equal to $x$. For technical reasons we ignore any $0$'s in a list $M$ when discussing $(M)$-decompositions.

\begin{lemma}\label{MixedCycles}
Let $a$ and $k$ be non-negative integers such that $k\geq 3$, $a\leq k$ and $a$ is even. Let $C$ be a cycle of length $k$, and let $N$ be a vertex set of size $k-a$ such that $V(C)\cap N=\emptyset$. Then there exists a $(3^a,5^{k-a})$-decomposition of $K_2^c\vee (C\cup K_N^c)$ such that each cycle in the decomposition contains exactly one edge of $C$.
\end{lemma}

\begin{proof}
Let $y$ and $z$ be the vertices in $K_2^c$, let $C=(c_1,c_2,\ldots,c_k)$, and let $N=\{x_1,x_2,\ldots,x_{k-a}\}$. Let
\begin{align*}
\mathcal{D}_1 &= \{(y,c_k,c_1),(z,c_1,c_2),(y,c_2,c_3),(z,c_3,c_4),\ldots,(y,c_{a-2},c_{a-1}),(z,c_{a-1},c_a)\}; \hbox{ and}\\
\mathcal{D}_2 &= \{(y,c_a,c_{a+1},z,x_1),(y,c_{a+1},c_{a+2},z,x_2),\ldots, (y,c_{k-2},c_{k-1},z,x_{k-a-1}),(y,c_{k-1},c_{k},z,x_{k-a})\};
\end{align*}
where $\mathcal{D}_1$ is understood to be empty and $c_0=c_k$ if $a=0$, and $\mathcal{D}_2$ is understood to be empty if $a=k$. Then $\mathcal{D}_1 \cup \mathcal{D}_2$ is a decomposition with the required properties.
\end{proof}

\begin{theorem}[\cite{BryHorPet14}]\label{AlspachEven}
Let $n$ be a positive even integer and let $m_1,\ldots, m_{t}$ be integers such that $3 \leq m_i \leq n$ for $i\in\{1,\ldots,t\}$ and $m_1+m_2+\dots+m_{t}= \binom{n}{2}-\frac{n}{2}$. Then there exists an $(m_1,\ldots, m_{t})$-decomposition of $K_n-I$, where $I$ is a $1$-factor with vertex set $V(K_n)$.
\end{theorem}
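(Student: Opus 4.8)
The necessity of the conditions is clear, so the entire task is to prove sufficiency. The graph $K_n-I$ has $\binom{n}{2}-\frac{n}{2}=\frac{n(n-2)}{2}$ edges, so any cycle decomposition must have lengths summing to this value; each length is at least $3$ by the definition of a cycle and at most $n$ since no cycle can use more than $n$ vertices. Conversely $K_n-I$ is $(n-2)$-regular with $n-2$ even, hence connected with all degrees even, so it certainly possesses \emph{some} cycle decomposition. The real content is that one can realise an \emph{arbitrary} admissible multiset of lengths, and my plan is to start from a convenient decomposition into short cycles and then merge and adjust cycle lengths by a sequence of local recombinations until the lengths are exactly $m_1,\ldots,m_t$.

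For the base decomposition I would produce a decomposition of $K_n-I$ into cycles that are as short and as flexible as possible --- ideally triangles, or triangles together with a bounded number of $5$-cycles. Lemma~\ref{MixedCycles} is exactly the kind of tool needed here: it decomposes a graph of the form $K_2^c\vee(C\cup K_N^c)$ into $3$- and $5$-cycles, and graphs of this shape arise naturally when one constructs $K_n-I$ by repeatedly adjoining a pair of non-adjacent vertices and recombining the new edges with an existing cycle of the current decomposition. Assembling these local decompositions yields a base decomposition of $K_n-I$ into short cycles, with enough $5$-cycles present to absorb the residue of $\frac{n(n-2)}{2}$ modulo $3$, so that such a base always exists.

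The engine of the proof is a recombination step that, given a decomposition realising one length multiset, replaces two of its cycles by two new cycles so as to shift the multiset of lengths towards the target. Concretely, if two cycles share two vertices $u$ and $v$, one can cut each along its pair of $u$--$v$ arcs and rejoin the four arcs the other way, producing two cycles whose lengths differ from the originals while their total length is preserved; allowing one of the new ``cycles'' to be trivial effects a merge, and iterating merges and splits lets one travel from the all-$(3,5)$ list to any admissible list. I would organise this as an induction, measuring progress by how far the current length multiset is from $(m_1,\ldots,m_t)$ and showing at each step that two cycles with the required overlap can always be located. Note that the twin-vertex switch of Lemma~\ref{CycleSwitchGeneral} is of little direct use in this setting, because in $K_n-I$ the only twin pairs are the two endpoints of an edge of $I$; the recombination must instead exploit cycles that genuinely overlap in the graph.

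The hard part will be carrying out this recombination in full generality and hitting every prescribed length exactly. The troublesome regimes are lists dominated by cycles of length close to $n$ --- in particular those requiring many Hamilton cycles, where a recombination has very little room to change a length by a small amount without overshooting $n$ or dropping below $3$ --- together with the need to respect the deleted $1$-factor $I$ throughout, so that every rejoined arc avoids the missing edges. Controlling these extreme cases, and guaranteeing that the requisite pair of overlapping cycles is always available so that the induction never stalls, is where the bulk of the technical effort lies; this is precisely what the lengthy argument of~\cite{BryHorPet14} accomplishes.
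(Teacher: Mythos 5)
This statement is not proved in the paper at all: it is quoted verbatim from \cite{BryHorPet14}, where it is the resolution of Alspach's conjecture for even orders, and its proof occupies essentially that entire (long) paper. So there is no in-paper argument to match your proposal against; you are in effect being asked to prove a major theorem, and your sketch does not do so. Your final paragraph concedes as much by deferring ``the bulk of the technical effort'' to \cite{BryHorPet14}, which means the proposal establishes nothing beyond the trivial necessity direction.

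Beyond being incomplete, the sketch has concrete defects. First, your merging mechanism does not exist: if two edge-disjoint cycles of a simple graph meet in exactly two vertices $u$ and $v$, then at most one of the four $u$--$v$ arcs can be the single edge $uv$, so the cut-and-rejoin operation can never produce a ``trivial'' piece, and hence never merges two cycles into one. Two cycles whose lengths sum to less than $6$ worth of slack simply cannot be combined this way, yet merging short cycles into long ones is the whole point. Second, the base decomposition is asserted rather than constructed: Lemma~\ref{MixedCycles} applies only to graphs of the special form $K_2^c\vee(C\cup K_N^c)$, and $K_n-I$ does not decompose into such pieces in any obvious inductive way; moreover a $(3^a,5^b)$-decomposition of $K_n-I$ is itself a nontrivial special case of Theorem~\ref{AlspachEven}. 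Third, your dismissal of Lemma~\ref{CycleSwitchGeneral} discards the one tool that actually drives the known proof: the argument of \cite{BryHorPet14} works with packings (of $K_n$, where \emph{every} pair of vertices is twin, the $1$-factor being absorbed into the leave) and performs exactly these twin-vertex switches on leaves to merge and adjust cycle lengths --- the same machinery this paper develops in Sections~3--5 for $K_v-K_u$. In short, the strategy you describe is pointed in roughly the right direction but its central recombination step is unsound as stated, and the result should simply be cited, not re-proved.
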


\begin{theorem}[{\cite{ChoFuHua99}}]\label{ChouFuHuang}
Let $a$, $b$, $p$, $q$ and $r$ be positive integers such that $a\geq 4$ and $b\geq 6$ are even. Then there exists a $(4^p,6^q,8^r)$-decomposition of $K_{a,b}$ if and only if $4p+6q+8r=ab$.
\end{theorem}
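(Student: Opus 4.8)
Theorem~\ref{ChouFuHuang} is quoted from \cite{ChoFuHua99}, so strictly no proof is required here; nonetheless, this is the route I would take. \emph{Necessity} is immediate: $K_{a,b}$ is bipartite, so every cycle it contains has even length, and adding up the edge counts of the cycles in any $(4^p,6^q,8^r)$-decomposition yields $4p+6q+8r=|E(K_{a,b})|=ab$. The substance is \emph{sufficiency}: from the single Diophantine condition $4p+6q+8r=ab$, together with $a,b$ even, $a\ge 4$ and $b\ge 6$, produce the decomposition.

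My plan is to reduce to ``thin'' strips and then induct. Since every even integer $a\ge 4$ can be written as a non-negative combination $4s+6t$, I would partition the part $A$ of size $a$ into $s$ blocks of size $4$ and $t$ blocks of size $6$; this splits $K_{a,b}$ edge-disjointly into $s$ copies of $K_{4,b}$ and $t$ copies of $K_{6,b}$. It then suffices to (i) decompose each strip $K_{4,b}$ and $K_{6,b}$ into $4$-, $6$- and $8$-cycles realising any prescribed feasible triple for that strip, and (ii) distribute the global target $(p,q,r)$ among the strips so that each strip $K_{a_i,b}$ receives a triple $(p_i,q_i,r_i)$ with $4p_i+6q_i+8r_i=a_ib$. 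The strips themselves I would handle by induction on $b$: peel off a short block $K_{4,c}$ or $K_{6,c}$ with $c\in\{6,8\}$, give it a small hand-built sub-decomposition carrying whatever share of the three lengths is needed, and recurse, bottoming out at a finite list of small base cases (such as $K_{4,6}$, $K_{4,8}$, $K_{6,6}$, $K_{6,8}$) dealt with explicitly. Note that $4,6,8\le 2\min(4,b)$, so all three lengths can genuinely occur in a strip; in particular each $8$-cycle must use all four vertices of a width-$4$ block, which constrains where it can sit.

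Within a fixed strip I would move between different triples on the same edge set by recombinations that recut the edges of a few cycles into cycles of altered lengths with the same total, in the spirit of the path-swapping switches of Lemma~\ref{CycleSwitchGeneral}; the prototypical instances are trading a $4$-cycle together with an $8$-cycle for two $6$-cycles (since $4+8=6+6$) and trading three $4$-cycles for two $6$-cycles (since $3\cdot 4=2\cdot 6$). One checks that the shift vectors of these two moves generate the entire lattice of integer solutions of $4\,\delta p+6\,\delta q+8\,\delta r=0$, so from any one decomposition of a strip every feasible triple for that strip is reachable. \textbf{The main obstacle} I expect lies not in the bulk arithmetic but at the boundary: realising the \emph{extreme} triples in which one of $p,q,r$ is forced to be $0$ or very small, where the trading moves run out of room and one must instead exhibit direct constructions, and simultaneously guaranteeing that the global target can be split across the strips without handing some strip an infeasible (negative or too-small) sub-triple. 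Pinning down the finite set of small base cases, verifying that the requisite recombination configurations are always present and can be localised, and confirming that every feasible $(p,q,r)$ survives the distribution step is where the real care is needed.
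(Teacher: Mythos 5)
You are right that the paper offers no proof of Theorem~\ref{ChouFuHuang}: it is imported verbatim from \cite{ChoFuHua99} and used as a black box (as the base case of Theorem~\ref{Bipartite} and directly in Lemmas~\ref{BaseDecomposition} and \ref{BaseDecomposition_9}), so there is nothing in the paper to compare your argument against. Your necessity argument is complete, and your plan for sufficiency --- slice $K_{a,b}$ into strips $K_{4,b}$ and $K_{6,b}$ via $a=4s+6t$, solve each strip, and distribute $(p,q,r)$ among the strips --- is a sensible blueprint of the kind used in the literature for such bipartite decomposition results.

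As a proof, however, the sketch has gaps beyond the ones you flag. The one you do not mention is a congruence obstruction in the distribution step: each strip has $4b$ or $6b$ edges, both divisible by $4$ since $b$ is even, and $4p_i+8r_i\equiv 0\pmod{4}$, so every strip must receive an \emph{even} number $q_i$ of $6$-cycles. (Globally $q$ is forced to be even by $4p+6q+8r=ab\equiv 0\pmod{4}$, so an admissible split exists, but ``non-negative and not too small'' is not the only constraint on the sub-triples.) The two difficulties you do flag are genuinely the hard part. Your trades $4+8\leftrightarrow 6+6$ and $3\cdot 4\leftrightarrow 2\cdot 6$ do generate the full lattice of integer solutions of $4\,\delta p+6\,\delta q+8\,\delta r=0$, but they cannot always be executed on a given decomposition: a $4$-cycle and an $8$-cycle that are vertex-disjoint cannot be recut into two $6$-cycles at all, since each $6$-cycle would have to lie inside one component. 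So one must either show the relevant cycles can always be brought into a suitable overlapping configuration (e.g.\ by switches in the spirit of Lemma~\ref{CycleSwitchGeneral}) or replace trading by direct constructions indexed by $(p_i,q_i,r_i)$; and the extreme triples and the finite list of base cases such as $K_{4,6}$ and $K_{6,8}$ must actually be exhibited. Until those steps are carried out this is a plan rather than a proof --- which is presumably why the authors cite \cite{ChoFuHua99} rather than reprove it.
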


Theorem \ref{Bipartite} below is slightly stronger than Theorem 1.1 of \cite{Horsley12} but is easily proved using the following lemma which is Lemma 3.6 of that paper.

\begin{lemma}[\cite{Horsley12}]\label{BipartiteJoin}
Let $M$ be a list of integers and let $a$, $b$, $h$, $n$ and $n'$ be positive integers such that $a\leq b$, $n+n'\leq 3h$, $n+n'+h\leq 2a+2$ if $a<b$, and $n+n'+h\leq 2a$ if $a=b$. If there exists an $(M,h,n,n')$-decomposition of $K_{a,b}$, then there exists an $(M,h,n+n')$-decomposition of $K_{a,b}$.
\end{lemma}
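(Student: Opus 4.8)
The plan is to leave the cycles with lengths in $M$ entirely alone and to rearrange only the three distinguished cycles: the $h$-cycle $C_h$, the $n$-cycle $C_n$ and the $n'$-cycle $C_{n'}$. The target is a decomposition in which $C_n$ and $C_{n'}$ have been merged into a single $(n+n')$-cycle while some $h$-cycle survives. The first thing I would record is \emph{why} the $h$-cycle must be involved at all. Since $K_{a,b}$ is bipartite every cycle length is even, and a single $(n+n')$-cycle has all vertices of degree $2$, whereas the edge-disjoint union $C_n\cup C_{n'}$ has a degree-$4$ vertex at each shared vertex and is disconnected when the two cycles are vertex-disjoint. Hence one can never reconstitute a single $(n+n')$-cycle out of the edges of $C_n\cup C_{n'}$ alone; some edges must be exchanged with a further cycle, which is exactly the role $C_h$ will play. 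This is the structural reason the hypotheses constrain $h$.

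Concretely, I would delete $C_h$, $C_n$ and $C_{n'}$ from the decomposition to obtain a packing $\mathcal{P}$ of $K_{a,b}$ whose reduced leave is $L=C_h\cup C_n\cup C_{n'}$, and then seek a repacking of $\mathcal{P}$ whose reduced leave is the edge-disjoint union of an $h$-cycle and an $(n+n')$-cycle; adding these two cycles back to $\mathcal{P}$ yields the desired $(M,h,n+n')$-decomposition, the surviving $M$-cycles keeping their lengths because repacking preserves cycle lengths. The engine is cycle switching between two vertices lying in the \emph{same} part of $K_{a,b}$, which are twin, so the switching operation of Lemma~\ref{CycleSwitchGeneral} applies. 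A single such switch between a vertex of $C_n$ and a vertex of $C_{n'}$ in the smaller part either merges the two cycles outright (leaving the disjoint union of an $(n+n')$-cycle and $C_h$, and we are done) or leaves $C_h$ together with a good $2$-chain formed by $C_n$ and $C_{n'}$. In the latter situation the configuration is exactly the input to the bipartite analogue of Lemma~\ref{1Degree4Vertex}, applied with $m_1=h$, $m_2=n+n'$ and $k=t=2$: the reduced leave has two components, one a cycle and one a good $2$-chain, and one wants to recombine it into two cycles of the prescribed lengths. The point is that the hypotheses translate precisely: the vertex bound $n+n'+h\le 2a+2$ (respectively $2a$ when $a=b$) is the bipartite form of the condition $m_1+m_2\le 2\min(|U|+2,|V|-|U|)$ and, via a degree count in the style of Lemma~\ref{Deg4AndIsolate}, guarantees a vertex of the smaller part missing from the current leave to serve as a switch target, while $n+n'\le 3h$ is the counterpart of the balance conditions $m_1,m_2\ge s$ appearing in Lemmas~\ref{2Paths}--\ref{2Cycles} and ensures the target lengths are realisable by a recombination.

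The main obstacle is establishing the bipartite analogue of the chain-and-ring machinery of Sections~\ref{2ChainSec}--\ref{JoiningSec}, and its crux is that cycle switching does not let us choose the terminus of a switch: each switch branches into a favourable case (a merge, or a reduction in the number of pieces) and an unfavourable case that pinches the configuration into a chain or ring, or isolates a vertex. The bulk of the work is the case analysis showing that the unfavourable outcomes still make monotone progress toward a configuration that splits into exactly an $h$-cycle and an $(n+n')$-cycle, and that the process terminates. The bipartite setting is in one respect simpler than that of $K_V-K_U$, since there are no pure or cross edges to track and the only bookkeeping is the evenness of cycle lengths; but the degenerate cases still need care, in particular the balanced case $a=b$ where the vertex counts are tightest, configurations in which the three cycles start out pairwise vertex-disjoint, and the smallest admissible lengths. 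Once the leave has been turned into the disjoint union of an $h$-cycle and an $(n+n')$-cycle, returning these to $\mathcal{P}$ completes the proof.
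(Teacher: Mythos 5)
You are attempting to prove a result that this paper does not prove at all: Lemma~\ref{BipartiteJoin} is imported verbatim as Lemma~3.6 of \cite{Horsley12}, so there is no in-paper argument to compare against. Judged on its own terms, your proposal is a plausible strategy sketch --- and it is indeed the same kind of strategy used in \cite{Horsley12}: delete $C_h$, $C_n$, $C_{n'}$, and use cycle switching between twin vertices (vertices in a common part of $K_{a,b}$) to turn the leave into an $h$-cycle plus an $(n+n')$-cycle. But it is not a proof. After the first switch you hand the entire remaining difficulty to ``the bipartite analogue of the chain-and-ring machinery of Sections~\ref{2ChainSec}--\ref{JoiningSec}'' and to ``the bulk of the work'' of a case analysis, neither of which you supply. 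That machinery is the proof; it cannot be invoked as stated because Lemmas~\ref{2PureEdges}, \ref{2Cycles} and \ref{1Degree4Vertex} are formulated for $K_V-K_U$ and their hypotheses, inductions and parity arguments are organised around pure and cross edges, the location of link vertices in $U$ versus $V\setminus U$, and bounds of the form $m_1+m_2\leq 2\min(|U|+2,|V|-|U|)$; re-deriving them for $K_{a,b}$ is a substantial task, not a change of notation, and handling the degenerate configurations you yourself flag (shared vertices among $C_h$, $C_n$, $C_{n'}$; the tight case $a=b$) is exactly where such a rederivation could fail.

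Two concrete points sharpen this. First, your proposed translation of the hypotheses does not work: you say $n+n'\leq 3h$ ``is the counterpart of the balance conditions $m_1,m_2\geq s$,'' but in your own reduction the leave is a cycle together with a $2$-chain, so $s=2$ and that condition is vacuous for any admissible $h$ and $n+n'$. Since $n+n'\leq 3h$ is a genuine hypothesis of the lemma, a correct proof must use it somewhere, and your argument never does --- which means your argument, if it worked as described, would prove a false strengthening of the lemma (one can check that without some such constraint the conclusion can fail). Second, your opening ``structural reason'' that $C_h$ must be involved is spurious: cycle switching exchanges edges between the leave and the cycles of the packing, so the leave's edge set is not conserved, and indeed your very next paragraph describes a single switch that merges $C_n$ and $C_{n'}$ outright without touching $C_h$. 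The role of $C_h$ (and of the bound $n+n'\leq 3h$) emerges only inside the unfavourable branches of the switching case analysis --- precisely the part of the argument you have omitted.
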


\begin{theorem}\label{Bipartite}
Let $a$ and $b$ be positive integers such that $a$ and $b$ are even and $a\leq b$, and let $m_1,
m_2,\ldots, m_{\T}$ be even integers such that $4\leq m_1 \leq m_2 \leq\dots\leq m_{\T}$. If
\begin{itemize}
    \item[$(\rm{B}1)$]
$m_{\T}\leq 3m_{\T-1}$;
    \item[$(\rm{B}2)$]
$m_{\T-1}+m_{\T}\leq 2a+2$ if $a<b$ and $m_{\T-1}+m_{\T}\leq 2a$ if $a=b$; and
    \item[$(\rm{B}3)$]
$m_1+m_2+\dots+m_{\T}= ab$;
\end{itemize}
then there exists an $(m_1, m_2,\ldots, m_{\T})$-decomposition of $K_{a,b}$.
\end{theorem}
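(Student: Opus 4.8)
The plan is to prove the theorem by induction, using Theorem~\ref{ChouFuHuang} to provide base decompositions into cycles of lengths in $\{4,6,8\}$ and Lemma~\ref{BipartiteJoin} to assemble the prescribed lengths. I would induct on $\Phi=\sum_{i=1}^{\tau}m_i^2$. The idea is that whenever the largest length $m_\tau$ exceeds $8$, I split it into two even halves, apply the inductive hypothesis to the refined (smaller-$\Phi$) list, and then reassemble the two halves into a single $m_\tau$-cycle via Lemma~\ref{BipartiteJoin}. The conceptual point is that (B1) and (B2) are exactly the conditions needed to carry out this final merge, with the cycle of length $m_{\tau-1}$ playing the role of the auxiliary cycle $h$ in Lemma~\ref{BipartiteJoin}.

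First I would dispose of the degenerate cases. A cycle in $K_{a,b}$ has length at most $2\min(a,b)=2a$, so if $\tau=1$ then (B3) forces $m_1=ab\leq 2a$, whence $b\leq 2$ and $a=b=2$; here $K_{2,2}$ is a single $4$-cycle and we are done. If $a=2$ and $\tau\geq 2$, then the two largest lengths are even and at least $4$, so their sum is at least $8>2a+2$, contradicting (B2); combined with the $\tau=1$ analysis this shows that no admissible list with $a=2<b$ exists. If $a=b=4$, then (B2) forces the two largest lengths to sum to at most $8$, hence every $m_i=4$, and $K_{4,4}$ decomposes into four $4$-cycles. In every remaining case $a\geq 4$ and $b\geq 6$, so Theorem~\ref{ChouFuHuang} is available.

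For the base case of the induction, suppose $m_\tau\leq 8$, so that every $m_i\in\{4,6,8\}$; writing the list as $(4^p,6^q,8^r)$ we have $4p+6q+8r=ab$ by (B3), and Theorem~\ref{ChouFuHuang} supplies the decomposition. For the inductive step, suppose $m_\tau\geq 10$; then (B2) together with $m_{\tau-1}\geq 4$ forces $a\geq 6$, and hence $b\geq 6$. I would split $m_\tau$ into even parts $n\geq n'$ with $n+n'=m_\tau$ and $n-n'\leq 2$, so that $n'\geq m_\tau/2-1\geq 4$, and let $M'$ be obtained from the list by replacing the entry $m_\tau$ with the two entries $n,n'$. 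Then $\Phi$ strictly decreases (since $n^2+n'^2<(n+n')^2$), condition (B3) still holds, and a short case check, according to whether the largest entry of $M'$ is $m_{\tau-1}$ or $n$, shows that the two largest entries of $M'$ have ratio at most $3$ and sum within the bound of (B2); these verifications reduce to $m_\tau\geq 6$ and $m_{\tau-1}\geq 4$ together with the original (B1) and (B2). By the inductive hypothesis $K_{a,b}$ has an $(M')$-decomposition, which contains cycles of lengths $h=m_{\tau-1}$, $n$ and $n'$. Since $n+n'=m_\tau\leq 3m_{\tau-1}=3h$ by (B1) and $n+n'+h=m_\tau+m_{\tau-1}$ is at most $2a+2$ (or $2a$ when $a=b$) by (B2), Lemma~\ref{BipartiteJoin} merges the $n$- and $n'$-cycles into a single $m_\tau$-cycle, producing the desired $(m_1,\ldots,m_\tau)$-decomposition of $K_{a,b}$.

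The heart of the argument is the observation that (B1) and (B2) are precisely the merge conditions of Lemma~\ref{BipartiteJoin} applied to the two halves of $m_\tau$ with auxiliary cycle $m_{\tau-1}$. I expect the only real work to be routine: verifying that the near-equal split keeps (B1) and (B2) intact, and tracking parities so that $n$ and $n'$ are even and at least $4$. No step looks genuinely difficult, which matches the remark that the theorem follows easily from Lemma~\ref{BipartiteJoin}.
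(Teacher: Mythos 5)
Your proof is correct and follows essentially the same route as the paper's: both reduce to Theorem~\ref{ChouFuHuang} as the base case and use Lemma~\ref{BipartiteJoin} with $h=m_{\tau-1}$ to reassemble a split copy of the largest length, with (B1) and (B2) serving exactly as the merge conditions. The only (immaterial) differences are that the paper argues via a counterexample with a maximum number of entries and splits $m_\tau$ into $4$ and $m_\tau-4$, whereas you induct on $\sum m_i^2$ and split into two near-equal even halves.
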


\begin{proof} Suppose for a contradiction that there exists a non-decreasing list of integers that satisfies the hypotheses of the theorem but for which there is no  corresponding decomposition of $K_{a,b}$, and amongst all such lists let $Z=z_1,\ldots,z_{\T}$ be one with a maximum number of entries.

It follows from Theorem \ref{ChouFuHuang} that $z_{\T} > 8$. Let $Z^\star$ be the list $z_1,z_2,\ldots,z_{\T-1},4,z_{\T}-4$ reordered so as to be non-decreasing. Since $Z$ satisfies the conditions of the claim, so must $Z^\star$, and since $Z^\star$ has more entries than $Z$, there exists a $(Z^\star)$-decomposition of $K_{a,b}$. However, by applying Lemma~\ref{BipartiteJoin} with $n=4$, $n'=z_{\T}-4$ and $h=z_{\T-1}$ we obtain a $(Z)$-decomposition of $K_{a,b}$ which is a contradiction.
\end{proof}

For each even integer $\ell \geq 4$ we define a list $R_{\ell}$ as follows
$$R_{\ell}=\left\{
  \begin{array}{ll}
    4^{\ell/4} & \hbox{if $\ell \equiv 0 \mod{4}$;} \\
    4^{(\ell-6)/4},6 & \hbox{if $\ell \equiv 2 \mod{4}$.}
  \end{array}
\right.$$
We also define $R_0$ to be the empty list. Given a list $R_{\ell}$ and a positive integer $i$ we define $R^i_{\ell}$ to be the list obtained by concatenating $i$ copies of $R_{\ell}$.

\begin{lemma}\label{BaseDecomposition}
Let $u, v$ and $m$ be odd integers such that  $m\geq 11$, $(u,v)$ is $m$-admissible, $v-u\geq m+1$, $u\geq m$ if $m\in\{11,13,15\}$, and $u\geq m-2$ if $m\geq 17$. Let $k$, $t$ and $x$ be the non-negative integers such that $u(v-u)=(m-1)k + t$, $t < m-1$ and $m(k+x)=\binom{v}{2}-\binom{u}{2}$. Then, for some $h \in \{4,6,\ldots,m-7\} \cup \{m-3\}$, there exists an $(m^x,3^k,h,R^{k-1}_{m-3},R^1_{m-h-3})$-decomposition of $K_v-K_u$ in which each cycle of length less than $m$ contains at most one pure edge.
\end{lemma}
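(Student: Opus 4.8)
The goal is to exhibit a concrete decomposition of $K_v-K_u$ whose pieces are: $x$ cycles of length $m$; a large collection of short cycles ($3$'s, $4$'s and $6$'s, packaged via the lists $R_{m-3}$ and $R_{m-h-3}$); and a single cycle of length $h$. The unifying idea should be to split $K_v-K_u$ into three natural edge-disjoint pieces and decompose each separately: the pure part (the edges inside $V\setminus U$, forming a $K_{v-u}$ minus nothing), the cross part (the complete bipartite graph $K_{U,V\setminus U}$ between hole and non-hole vertices), and then reassemble. The cross edges carry the bulk of the count $u(v-u)=(m-1)k+t$, which is why $k$ counts groups of $m-1$ cross edges. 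The plan is to decompose the cross edges $K_{U,V\setminus U}$ largely into even cycles using Theorem~\ref{Bipartite} (after checking its hypotheses (B1)--(B3) for the even lengths we want), and to absorb the pure edges together with a controlled number of cross edges into the short odd cycles (the $3$'s, and the single $h$-cycle) so that exactly two pure edges remain available per merge — matching the ``at most one pure edge per short cycle'' requirement.

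The key mechanism for getting odd cycles with exactly one pure edge should be Lemma~\ref{MixedCycles}: it decomposes $K_2^c\vee(C\cup K_N^c)$ into $3$- and $5$-cycles each using exactly one edge of $C$. First I would take a Hamilton-type cycle (or a union of cycles) $C$ through the $v-u$ pure vertices so that each edge of $C$ is a pure edge, pair it with two hole vertices playing the role of $K_2^c$, and apply Lemma~\ref{MixedCycles} to convert each pure edge into a short odd cycle with exactly one pure edge. This is how the $3^k$ factor and the odd-length $h$-cycle (which is really a short odd cycle built the same way, of length $h$ when $h\le m-7$, or via a slightly different packaging when $h=m-3$) arise, while simultaneously consuming the remainder $t$ of cross edges. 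The residue $m-h-3$ governs the final partial copy $R^1_{m-h-3}$, so $h$ must be chosen as the unique value in $\{4,6,\dots,m-7\}\cup\{m-3\}$ making the arithmetic $m(k+x)=\binom{v}{2}-\binom{u}{2}$ and $u(v-u)=(m-1)k+t$ consistent with the sizes of $R^{k-1}_{m-3}$ and $R^1_{m-h-3}$.

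I would then assemble the pieces: the even cross cycles from Theorem~\ref{Bipartite} supply the $R^{k-1}_{m-3}$ and $R^1_{m-h-3}$ portions (lengths $4$ and $6$), the mixed construction supplies the $3^k$ and the $h$-cycle, and the $m^x$ cycles come from whatever surplus must be decomposed directly into $m$-cycles (either additional cross/pure structure, or by reserving $x$ groups early). The admissibility condition (N3), $v\ge\frac{u(m+1)}{m-1}+1$, together with $v-u\ge m+1$, is exactly what guarantees $v-u$ is large enough relative to $u$ that the bipartite hypotheses (B2) on $\min$ of the two parts hold and that enough pure vertices exist to route the cycle $C$; the lower bounds on $u$ (namely $u\ge m$ for $m\in\{11,13,15\}$ and $u\ge m-2$ for $m\ge17$) are the thresholds making the case analysis on $h$ and on the parity of $v-u$ uniform.

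**Main obstacle.**
The hard part will be the bookkeeping that simultaneously (i) pins down $h$ as a function of the residues of $u(v-u)$ and $\binom{v}{2}-\binom{u}{2}$ modulo $m-1$ and $m$, and (ii) verifies that with this $h$ every hypothesis of Theorem~\ref{Bipartite} — especially the balance condition (B2), $m_{\tau-1}+m_\tau\le 2a+2$ (or $2a$), and the total-edge condition (B3) — is met by the even-length multiset we feed it, since the parts of the bipartite graph are $|U|=u$ and $|V\setminus U|=v-u$ and these can be close in size. A secondary difficulty is handling the parity of $v-u$ cleanly: if $v-u$ is even one can take $C$ to be a single spanning cycle on the pure vertices, but if $v-u$ is odd the pure graph $K_{v-u}$ has odd order and the cycle $C$ plus leftover pure edges must be arranged so that precisely the right number of pure edges end up distributed one-per-short-cycle, which likely forces the split into the boundary cases flagged by the hypotheses on $u$. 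I expect that once the value of $h$ and a valid even-cycle list are fixed, the actual decomposition follows routinely by concatenating Lemma~\ref{MixedCycles} with Theorem~\ref{Bipartite}, so the real content is the arithmetic feasibility argument rather than any new switching idea.
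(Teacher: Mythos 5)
Your high-level architecture does match the paper's: the pure edges on $V\setminus U$ are organised into cycles plus a $1$-factor (via Theorem~\ref{AlspachEven}), most of these are converted by Lemma~\ref{MixedCycles} into $3$-cycles each containing exactly one pure edge (consuming two hole vertices per pure cycle), and the bulk of the cross edges is decomposed into even cycles of lengths $4$ and $6$ via Theorem~\ref{Bipartite}. However, there are two genuine gaps. First, you have misread the role of $h$: since $m$ is odd, every element of $\{4,6,\ldots,m-7\}\cup\{m-3\}$ is even, so the $h$-cycle is not ``a short odd cycle built the same way'' carrying a pure edge --- in the paper it is simply one of the even cycles in the bipartite (cross-edge) part and contains no pure edge at all; moreover $h$ is not pinned down by an exactness requirement but is taken to be the smallest admissible value with $h\ge\frac{2q''+t}{3}$, precisely so that condition (B1) of Theorem~\ref{Bipartite} holds for the two largest entries of the even-length list. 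Second, and more seriously, your plan has no mechanism for absorbing the remainder $t$ of cross edges: after the $3$-cycles (each using two cross edges and one pure edge) and the even bipartite cycles are formed, $t$ cross edges and a corresponding deficit on the pure side are left over, and no combination of Lemma~\ref{MixedCycles} and Theorem~\ref{Bipartite} alone accounts for them. The paper handles this by reserving an $(m+q''-t)$-cycle $C''$ inside the pure part and a $(2q''+t)$-cycle $C^\dag$ inside the bipartite part, then re-decomposing $C''\cup C^\dag$ by hand into $q''$ triangles and one $m$-cycle (the set $\D_5$); this is the step that makes the arithmetic close, and it is absent from your outline. (Incidentally, the ``parity of $v-u$'' difficulty you flag does not arise: $u$ and $v$ are both odd, so $v-u$ is always even.)

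Beyond these, essentially all of the content of the lemma lies in the bookkeeping you defer: the case analysis on $q$ defining auxiliary parameters $p',q',q''$, the verification that $p'\ge 0$ (which is exactly where the hypotheses $u\ge m$ for $m\in\{11,13,15\}$ and $u\ge m-2$ for $m\ge 17$ enter), and the inequalities needed for (B2), which rest on the nontrivial bound $2(u-2p)\ge\tfrac{4}{3}m+\tfrac{22}{3}$ derived from $u\ge(p+\tfrac12)(m-1)$. As written, the proposal is a sketch of the correct skeleton with one structural error and one missing construction, rather than a proof.
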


\begin{proof} Observe that $k$ is the maximum number of pairwise edge-disjoint $m$-cycles in $K_v-K_u$ that each contain exactly one pure edge. Also note that $t$ is even and so $t \leq m-3$. Let $w=v-u$, note that $w$ is even, and let $p$ and $q$ be the non-negative integers such that $k=w(p+\frac{1}{2})+q$ and $q <w$. We will make use of the following facts throughout this proof.

\begin{equation}\label{edges}
uw =  (w(p+\tfrac{1}{2})+q)(m-1) + t
\end{equation}
\begin{equation}\label{claim}
2(u-2p) \geq
\left\{
  \begin{array}{ll}
    \frac{4}{3}m+\frac{22}{3} & \hbox{if $p = 0$} \\
    \frac{4}{3}m+\frac{34}{3} & \hbox{if $p \geq 1$}
  \end{array}
\right.
\end{equation}

Note that \eqref{edges} follows directly from the definitions of $k$, $t$, $p$ and $q$. To see that \eqref{claim} holds, observe that when $p=0$ and $m\in\{11,13,15\}$ we have $2(u-2p) = 2u \geq 2m$, which implies $2(u-2p) \geq \frac{4}{3}m+\frac{22}{3}$ since $m\geq 11$. When $p=0$ and $m\geq 17$ we have $2(u-2p) = 2u \geq 2m-4$, which implies $2(u-2p) \geq \frac{4}{3}m+\frac{22}{3}$ since $m\geq 17$. Also, \eqref{edges} implies $uw\geq w(p+\frac{1}{2})(m-1)$ and so $u\geq(p+\frac{1}{2})(m-1)$. Thus $2(u-2p) \geq 2p(m-3)+m-1$. So when $p \geq 1$ we have $2(u-2p) \geq 3m-7$, which implies $2(u-2p) \geq \frac{4}{3}m+\frac{34}{3}$ since $m\geq 11$.

Let $U=\{y_1,y_2,\ldots,y_u\}$ and $W=\{z_1,z_2,\ldots,z_w\}$ be disjoint sets of vertices. We will construct a decomposition of $K_{U\cup W}-K_U$ with the desired properties. Let $I$ be a $1$-factor with vertex set $W$. The proof divides into two cases depending on whether $t=0$ or $t>0$.

\noindent{\bf Case 1.} Suppose that $t>0$. Then $q>0$, for otherwise \eqref{edges} implies $w(u-(p+\frac{1}{2})(m-1))=t>0$ which contradicts the facts that $w>m>t$ and $u-(p+\frac{1}{2})(m-1)$ is an integer.

Depending on the value of $q$, we define integers $p'$, $q'$ and $q''$ so that
$w(p'+\frac{1}{2})+q'+q''=k$ according to the following table.
$$\begin{array}{|c|c |c| c|} \hline
 & p' & q' & q''  \\ \hline
q \in \{1,2,3,4\} & p-1& w  & q  \\
q \in \{5,7,\ldots,w-1\}& p & q-1 & 1 \\
q \in \{6,8,\ldots,w-2\}& p & q-2 & 2 \\ \hline
\end{array} $$
We show that $p' \geq 0$ by establishing that it cannot be the case that both $p=0$ and $q \in \{1,2,3,4\}$. If $p=0$ and $q \in \{1,2,3,4\}$, then \eqref{edges} implies $uw \leq (m-1)(\frac{w}{2}+4) + (m-3)$ and hence $w(u-\frac{m-1}{2}) \leq 5m-7$. Because $u\geq m$ if $m\in\{11,13,15\}$ and $u\geq m-2$ if $m\geq 17$, we have that $u-\frac{m-1}{2} \geq 6$ and we obtain a contradiction by noting that $w > m$.

We define $h$ to be the smallest integer in $\{4,6,\ldots,m-7\} \cup \{m-3\}$ such that $h\geq \frac{2q''+t}{3}$. Using the facts that $q'' \in \{1,2,3,4\}$ and $t \leq m-3$ it is routine to check that if $\frac{2q''+t}{3}>m-7$, then $m=11$, $q'' \in \{3,4\}$ and $h=8$. Thus $h$ is well-defined and, if it is not the case that $\frac{2q''+t}{3} \leq h \leq \frac{2q''+t+5}{3}$, then either $2q''+t \leq 6$ and $h=4$ or $m=11$, $q'' \in \{3,4\}$ and $h=8$. We claim that
\begin{equation}\label{cycleSumBound}
2q''+t+h \leq
\left\{
  \begin{array}{ll}
	\frac{4}{3}m+\frac{28}{3} & \hbox{if $p' = p-1$;} \\
    \frac{4}{3}m+3 & \hbox{if $p' = p$.}
  \end{array}
\right.
\end{equation}
To see that this is the case note that if $2q''+t \leq 6$ and $h=4$ or if $m=11$, $q'' \in \{3,4\}$ and $h=8$, then \eqref{cycleSumBound} holds (recall that $t \leq m-3$ and $m \geq 11$). Otherwise, $2q''+t+h \leq \frac{4}{3}(2q''+t)+\frac{5}{3}$ and hence $2q''+t+h \leq \frac{4}{3}m+\frac{1}{3}(8q''-7)$ using $t \leq m-3$. Because $q'' \leq 2$ if $p'=p$ and $q'' \leq 4$ if $p'=p-1$, \eqref{cycleSumBound} holds.

We will complete the proof by constructing an $(m^x,3^k,h,R^{k-1}_{m-3},R^1_{m-h-3})$-decomposition of $K_{U \cup W}-K_U$ in which each cycle of length less than $m$ contains at most one pure edge. We will construct this decomposition in such a way that the pure edges in the $3$-cycles of the decomposition form $p'$ $w$-cycles, a 1-factor with $w$ vertices, a $q'$-cycle, and a $q''$-path (recall that $p'w+w/2+q'+q''=k$). Our required decomposition can be obtained as
$$(\D_1\setminus \{H_1,\ldots,H_p,C',C'' \}) \cup (\D_2\setminus \{C^\dag\}) \cup \D_3 \cup \D_4 \cup \D_5$$
where $\D_1,\D_2,\D_3,\D_4,\D_5$ are given as follows.

\begin{itemize}
	\item
$\D_1$ is a $(w^{p'},m^{x-1},q',m+q''-t)$-decomposition of $K_W-I$, that includes $p'$ $w$-cycles $H_1,\ldots,H_{p'}$, a $q'$-cycle $C'$, and an $(m+q''-t)$-cycle $C''$ containing the path $[z_1,z_2,\ldots,z_{q''+1}]$ and not containing the $\frac{t}{2}-1$ vertices $z_{q''+2},z_{q''+3},\ldots,z_{q''+\frac{t}{2}}$. A $(w^{p'},m^{x-1},q',m+q''-t)$-decomposition of $K_W-I$ exists by Theorem \ref{AlspachEven} because $mx+p'w+q'+q''-t=mx+k-t-\frac{w}{2}=\binom{w}{2}-\frac{w}{2}$ (note that the definitions of $k$, $t$ and $x$ imply that $\binom{w}{2}=mx+k-t$). We can relabel the vertices of this decomposition to ensure that $C''$ has the specified properties because $(m+q''-t)+(\frac{t}{2}-1)=m-\frac{t}{2}+q''-1 \leq w$. (If $m-\frac{t}{2}+q''-1 \geq w+1$, then $t\geq 2$, $q''\leq 4$ and $w\geq m+1$ imply
$(t,q'',w)=(2,4,m+1)$ and hence $q=4$. However, in this case \eqref{edges} implies that $u(m+1)=(m+1)(m-1)(p+\frac{1}{2})+4m-2$ and hence that $(m+1)$ divides $4m-2$. This contradicts $m\geq 11$.)
	\item
$\D_2$ is a $(2q''+t,h,R^{k-1}_{m-3},R^1_{m-h-3})$-decomposition of $K_{\{y_1,\ldots,y_{u-2p'-1}\},W} - K_{\{y_{u-2p'-2},y_{u-2p'-1}\},V(C')}$ that includes the
$(2q''+t)$-cycle $C^{\dag}=(y_1,z_1,y_2,z_2,\ldots,y_{q''+\frac{t}{2}},z_{q''+\frac{t}{2}})$. We form $\D_2$ by first decomposing $K_{\{y_{u-2p'-2},y_{u-2p'-1}\},W\setminus V(C')}$ into $\frac{w-q'}{2}$ $4$-cycles (if $q'<w$) and then decomposing $K_{\{y_1,y_2,\ldots,y_{u-2p'-3}\}, W}$ into cycles of the remaining lengths. Note that $R^{k-1}_{m-3}$ contains at least $k-1$ $4$'s since $m \geq 11$, and also that $k-1=w(p+\frac{1}{2})+q-1 \geq \frac{w}{2}+q-1  \geq \frac{w-q'}{2}$.

A decomposition of $K_{\{y_1,y_2,\ldots,y_{u-2p'-3}\}, W}$ into cycles of the remaining lengths exists by Theorem \ref{Bipartite}. Let $m_\tau$ and $m_{\tau-1}$ be respectively the greatest and second greatest of the remaining cycle lengths. To see that (B1), (B2) and (B3) hold, we first suppose that $(m_\tau,m_{\tau-1})=(2q''+t,h)$. It follows from the definition of $h$ that $2q''+t\leq 3h$ and hence that (B1) holds. If $u-2p'-3 \geq w$, then (B2) holds because,  using $m \geq 11$ and \eqref{cycleSumBound}, we have
$$2w \geq 2(m+1) \geq \tfrac{4}{3}m+\tfrac{28}{3} \geq 2q''+t+h.$$
If $u-2p'-3 < w$ and $p'=p$, then (B2) holds because, using \eqref{claim} and \eqref{cycleSumBound}, we have
$$2(u-2p'-3)+2 = 2(u-2p)-4 \geq (\tfrac{4}{3}m+\tfrac{22}{3})-4 > \tfrac{4}{3}m+3 \geq 2q''+t+h.$$ If $u-2p'-3<w$ and $p'=p-1$, then $p \geq 1$ and (B2) holds because, using \eqref{claim} and \eqref{cycleSumBound}, we have
$$2(u-2p'-3)+2 = 2(u-2p) \geq \tfrac{4}{3}m+\tfrac{34}{3} > \tfrac{4}{3}m+\tfrac{28}{3} \geq 2q''+t+h.$$
Finally, (B3) holds because, using the definitions of $t$, $k$, $p$ and $q$, and the fact that $q'+q''-q=w(p-p')$, we have
\begin{align*}
  (k-1)(m-3) +(m-h-3)+h+(2q''+t)-4\tfrac{w-q'}{2} &= k(m-3)+t-2w+2(q'+q'') \\
  &=(uw-2k)-2w+2(q'+q'') \\
  &=(u-2p-3)w+2(q'+q''-q) \\
  &=(u-2p'-3)w.
\end{align*}
Now suppose that $(m_\tau,m_{\tau-1})\neq(2q''+t,h)$. Then it must be that $m_\tau \leq 12$ and $m_{\tau-1} \in \{4,6\}$. Clearly (B1) holds. Also, $m_\tau+m_{\tau-1} \leq 18$. So if $w\leq u-2p'-3$ then (B2) holds because $2w \geq 2(m+1) \geq 24$, and if $w> u-2p'-3$ then (B2) holds because \eqref{claim} implies $2(u-2p'-3)+2 \geq 18$ (note that $p' \leq p$ and $m \geq 11$).
Finally, (B3) holds by the argument above. We can relabel the vertices of this decomposition to ensure that $C^{\dag}=(y_1,z_1,y_2,z_2,\ldots,y_{q''+\frac{t}{2}},z_{q''+\frac{t}{2}})$.
	\item
$\D_3$ is a $(3^{q'})$-decomposition of $K^c_{\{y_{u-2p'-2},y_{u-2p'-1}\}}\vee C'$ which exists by Lemma~\ref{MixedCycles}.
	\item
$\D_4$ is a $(3^{p'w+w/2})$-decomposition of $K_{\{y_{u-2p'},\ldots,y_{u}\}}^c \vee (I\cup H_1 \cup \cdots \cup H_{p'})$ which exists by applying Lemma~\ref{MixedCycles} to $K_{\{y_{u-2p'-1+i},y_{u-p'-1+i}\}}^c\vee H_i$ for $i\in\{1,2,\ldots,p'\}$ and taking the obvious decomposition of $K_{\{y_u\}} \vee I$.
	\item
$\D_5$ is the $(3^{q''},m)$-decomposition of $C'' \cup C^{\dag}$ given by
\begin{multline*}
\{(z_1,y_2,z_2),(z_2,y_3,z_3),\ldots,(z_{q''},y_{q''+1},z_{q''+1})\}\; \cup \\ \{(C''-[z_1,z_2,\ldots,z_{q''+1}])\cup [z_{q''+1},y_{q''+2},z_{q''+2},y_{q''+3},\ldots,y_{q''+\frac{t}{2}},z_{q''+\frac{t}{2}},y_1,z_1]\}.
\end{multline*}
\end{itemize}

\noindent{\bf Case 2.} Suppose that $t=0$. Then \eqref{edges} reduces to $uw=(w(p+\tfrac{1}{2})+q)(m-1)$. Note that $q\neq 1$, since if $q=1$ then $uw=w(p+\frac{1}{2})(m-1) + m-1$ and so $w$ divides $m-1$ which contradicts $w\geq m+1$. Depending on the value of $q$, we define integers $p'$, $q'$ and $q''$ so that $w(p'+\frac{1}{2})+q'+q''=k$ according to the following table.

$$\begin{array}{|c|c |c| c|} \hline
 & p' & q' & q''  \\ \hline
q \in \{0,3,5\} & p-1& w  & q  \\
q=2 & p-1& w-2 & 4 \\
q \in \{4,6,\ldots,w-2\}& p& q & 0 \\
q \in \{7,9,\ldots,w-1\} & p& q-3 & 3\\ \hline
\end{array} $$
We show that $p' \geq 0$ by establishing that it cannot be the case that both $p=0$ and $q \in \{0,2,3,5\}$. If $p=0$ and $q \in \{0,2,3,5\}$, then \eqref{edges} implies $uw \leq (m-1)(\frac{w}{2}+5)$ and hence $w(u-\frac{m-1}{2}) \leq 5m-5$. Because $u\geq m$ if $m\in\{11,13,15\}$ and $u\geq m-2$ if $m\geq 17$, we have that $u-\frac{m-1}{2} \geq 6$ and we obtain a contradiction by noting that $w > m$.

We will complete the proof by constructing an $(m^x,3^k,R^k_{m-3})$-decomposition of $K_{U \cup W}-K_U$ in which each cycle of length less than $m$ contains at most one pure edge. We will construct this decomposition in such a way that the pure edges in the $3$-cycles of the decomposition form $p'$ $w$-cycles, a 1-factor with $w$ vertices, a $q'$-cycle, and a $q''$-cycle if $q'' \neq 0$ (recall that $p'w+w/2+q'+q''=k$). The desired decomposition can be obtained as
$$(\D_1\setminus S_1) \cup (\D_2\setminus S_2) \cup \D_3 \cup \D_4 \cup \D_5$$
where $(S_1,S_2)=(\{H_1,H_2,\ldots,H_{p'},C',C''\},\{C^\dag\})$ if $q''\neq 0$, $(S_1,S_2)=(\{H_1,H_2,\ldots,H_{p'},C'\},\emptyset)$ if $q''=0$, and $\D_1,\D_2,\D_3,\D_4,\D_5$ are given as follows.

\begin{itemize}
	\item $\D_1$ is an $(m^x,w^{p'},q',q'')$-decomposition of $K_W-I$ that includes $p'$ $w$-cycles $H_1,\ldots,H_{p'}$, a $q'$-cycle $C'$ and, if $q''\neq 0$, the $q''$-cycle $C''=(z_1,z_2,\ldots,z_{q''})$. An $(m^x,w^{p'},q',q'')$-decomposition of $K_W-I$ exists by Theorem \ref{AlspachEven} because $mx+p'w+q'+q''=mx+k-\frac{w}{2}=\binom{w}{2}-\frac{w}{2}$ (note that the definitions of $k$, $t$ and $x$ imply that $\binom{w}{2}=mx+k-t$). We can relabel the vertices of this decomposition to ensure that $C''=(z_1,z_2,\ldots,z_{q''})$.
	\item
$\D_2$ is a $(2q'',3^k,R^k_{m-3})$-decomposition of $K_{\{y_1,y_2,\ldots,y_{u-2p'-1}\}, W}-
K_{\{y_{u-2p'-2},y_{u-2p'-1}\},V(C')}$ that includes the $(2q'')$-cycle $C^\dag=(y_1,z_1,y_2,z_2,\ldots,y_{q''},z_{q''})$ if $q''\neq 0$.

We form $\D_2$ by first decomposing $K_{\{y_{u-2p'-2},y_{u-2p'-1}\},W\setminus V(C')}$ into $\frac{w-q'}{2}$ $4$-cycles (if $q'<w$) and then decomposing $K_{\{y_1,y_2,\ldots,y_{u-2p'-3}\}, W}$ into cycles of the remaining lengths. Note that $R^k_{m-3}$ contains at least $k$ $4$'s since $m \geq 11$, and also that $k=w(p+\frac{1}{2})+q \geq \frac{w}{2}+q  \geq \frac{w-q'}{2}$.

A decomposition of $K_{\{y_1,y_2,\ldots,y_{u-2p'-3}\}, W}$ into cycles of the remaining lengths exists by Theorem \ref{Bipartite}. Let $m_\tau$ and $m_{\tau-1}$ be respectively the greatest and second greatest of the remaining cycle lengths. Note that $m_\tau \leq \max(2q'',6) \leq 10$ and $m_{\tau-1} \in \{4,6\}$. Clearly (B1) holds. If $w\leq u-2p'-3$, then (B2) holds because $m_{\tau-1}+m_\tau \leq 16$ and  $2w \geq 2(m+1) \geq 24$. If $w> u-2p'-3$, then (B2) holds because $m_{\tau-1}+m_\tau \leq 16$ and \eqref{claim} implies that $2(u-2p'-3)+2 \geq 18$ (note that $p' \leq p$ and $m \geq 11$). Finally, (B3) holds by a similar argument to that used in Case 1. We can relabel the vertices of this decomposition to ensure that $C^\dag=(y_1,z_1,y_2,z_2,\ldots,y_{q''},z_{q''})$ if $q''\neq 0$.
	\item
$\D_3$ is a $(3^{q'})$-decomposition of $K_{\{y_{u-2p'-2},y_{u-2p'-1}\}}^c\vee C'$ which exists by Lemma~\ref{MixedCycles}.
	\item
$\D_4$ is a $(3^{p'w+w/2})$-decomposition of $K_{\{y_{u-2p'},\ldots,y_{u}\}}^c \vee (I\cup H_1 \cup \cdots \cup H_{p'})$ which exists by applying Lemma~\ref{MixedCycles} to
$K_{\{y_{u-2p'-1+i},y_{u-p'-1+i}\}}^c\vee H_i$ for $i\in\{1,2,\ldots,p'\}$ and taking the obvious decomposition of $K_{\{y_u\}}\vee I$.
	\item
$\D_5$ is the $(3^{q''})$-decomposition of $C''\cup C^\dag$ given by $\{(z_1,y_2,z_2),(z_2,y_3,z_3),\ldots,(z_{q''-1},y_{q''},z_{q''}),(z_{q''},y_1,z_1)\}$ if $q'' \neq 0$, and $\D_5=\emptyset$ if $q''=0$.\qedhere
\end{itemize}
\end{proof}

\begin{lemma}\label{BaseDecomposition_9}
Let $u$ and $v$ be positive integers such that $(u,v)$ is $9$-admissible, $v-u\geq 10$ and $u \geq 9$. Let $k$, $t$ and $x$ be the non-negative integers such that $u(v-u)=8k + t$, $t <8$ and $9(x+k)=\binom{v}{2}-\binom{u}{2}$.
Then, for some non-negative integer $k'\leq k$, there exists a $(3^{k-k'},4^{k'},5^{k'},6^{k-k'},9^x)$-decomposition of $K_v-K_u$ in which each cycle of length less than $9$ contains at most one pure edge.
\end{lemma}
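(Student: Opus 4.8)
The plan is to adapt the construction in the proof of Lemma~\ref{BaseDecomposition} to $m=9$, the one new ingredient being a flexible mixture of $4$-, $5$- and $6$-cycles in place of the list $R_{m-3}$ (for $m=9$ this list is $R_6=6$ and so contains no $4$'s, which is exactly why Lemma~\ref{BaseDecomposition} excludes $m=9$). Write $U$ for the hole of size $u$ and $W=V\setminus U$ for the other $w=v-u$ vertices, so $w$ is even with $w\geq 10$. The edges of $K_v-K_u$ split into the $\binom{w}{2}$ \emph{pure} edges spanning $W$ and the $uw$ \emph{cross} edges of $K_{U,W}$. Since the number of cross edges in any cycle is even, the requirement ``at most one pure edge'' forces every $3$- or $5$-cycle to have exactly one pure edge (and two, resp.\ four, cross edges) and forces every $4$- or $6$-cycle to have none. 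Consequently $k'$ affects neither the number of pure edges used by the short cycles (namely $(k-k')+k'=k$) nor the number of cross edges used by them (namely $2(k-k')+4k'+4k'+6(k-k')=8k$); it merely records the split of the $k$ short odd cycles into $(3,6)$- and $(4,5)$-pairs. Using $uw=8k+t$ and $\binom{w}{2}=9x+k-t$, this leaves precisely $t$ cross edges and $9x-t$ pure edges for the $9$-cycles.

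First I would treat $t=0$, where all $x$ nine-cycles are pure. Fix a $1$-factor $I$ of $W$ and, by Theorem~\ref{AlspachEven}, decompose $K_W-I$ into the $x$ pure $9$-cycles together with a controlled family of auxiliary pure cycles of total length $k-\tfrac{w}{2}$ (note $k\geq\tfrac{w}{2}$ since $u\geq 9$). Reserving a small number of vertices of $U$, I would then apply Lemma~\ref{MixedCycles} to the auxiliary cycles, and the obvious triangle decomposition to $I$, to turn these pure edges together with the cross edges at the reserved $U$-vertices into $k-k'$ three-cycles and $k'$ five-cycles, each with exactly one pure edge. The mechanism for the five-cycles is a single gadget cycle (of length roughly $w-k'$, so that it together with a set $N$ of size $k'$ exhausts the $W$-neighbourhood of its two reserved $U$-vertices): the freedom to prescribe $|N|$ in Lemma~\ref{MixedCycles} is exactly what lets me realise any chosen $k'\leq \tfrac{w}{2}$. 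Finally I would decompose the complete bipartite graph between $W$ and the unreserved vertices of $U$ into the $k'$ four-cycles and $k-k'$ six-cycles by Theorem~\ref{Bipartite}.

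For $t>0$ I would follow Case~1 of Lemma~\ref{BaseDecomposition}: arrange for the pure part to supply one cycle slightly longer than a $9$-cycle and for the bipartite part to supply one short even cross-cycle, then recombine them (as in the set $\mathcal{D}_5$ there) into a single mixed $9$-cycle carrying the $t$ leftover cross edges and $9-t$ pure edges, plus the requisite triangles, leaving the other $x-1$ nine-cycles pure. Once these $t$ cross edges and $9-t$ pure edges are charged to the mixed cycle, the bookkeeping is identical to the $t=0$ case.

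The main obstacle is choosing $k'$ and the number of reserved $U$-vertices so that the hypotheses of Theorems~\ref{AlspachEven} and \ref{Bipartite} hold simultaneously, and in particular at the boundary $w=10$, $u=9$. The delicate point is condition (B2): the two largest lengths among the bipartite cycles must not exceed roughly twice the smaller side of the bipartition, so when the unreserved part of $U$ is small I must bias $k'$ toward four-cycles; conversely, enlarging $k'$ forces more five-cycles, hence a longer gadget cycle and the constraint $k'\leq\tfrac{w}{2}$, which strains both the Theorem~\ref{AlspachEven} step and the vertex budget in $W$. Verifying that these demands are jointly satisfiable—via inequalities in the spirit of \eqref{claim}, together with a careful accounting of which reserved $U$-vertex has exactly its intended cross edges consumed—is where the real work lies; the remaining edge-counting is routine. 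The resulting decomposition then feeds directly into Lemma~\ref{JoiningLemma}, which merges the $(3,6)$- and $(4,5)$-pairs into $9$-cycles.
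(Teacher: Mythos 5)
Your overall architecture matches the paper's (Theorem \ref{AlspachEven} for the pure part, Lemma \ref{MixedCycles} gadgets for the $3$- and $5$-cycles, a bipartite decomposition for the $4$- and $6$-cycles, and a $\D_5$-style recombination of a long pure cycle with a short cross cycle when $t>0$), and your edge-counting is correct. The gap is in your gadget architecture. Write $k=(p+1)\tfrac{w}{2}+q$ with $0\leq q<\tfrac{w}{2}$. If a pair of reserved hole-vertices is to have all $w$ of its cross edges consumed by the $3$- and $5$-cycles produced by one application of Lemma \ref{MixedCycles} to a cycle of length $\ell$ with $|N|=\ell-a$, then $2\ell-a=w$, which forces $\tfrac{w}{2}\leq\ell\leq w$ and yields exactly $w-\ell$ five-cycles. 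Your plan (one mixed gadget, every other gadget an all-triangle Hamilton cycle of $W$) therefore needs the auxiliary pure cycles, each of length in $[\tfrac{w}{2},w]$, to sum to $k-\tfrac{w}{2}=\tfrac{pw}{2}+q$, and with $g$ gadgets one gets $k'=gw-\tfrac{pw}{2}-q$ in the all-but-one-Hamiltonian case. When $p$ is even and $q\geq 1$ this forces $g\geq\tfrac{p}{2}+1$ and hence $k'\geq w-q>\tfrac{w}{2}$, so neither ``a single gadget'' nor ``$k'\leq\tfrac{w}{2}$'' is achievable; relaxing full consumption does not help either, since the single non-Hamiltonian gadget would then need length $q\in\{1,2\}$, which is not a cycle. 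A concrete instance is $(u,v)=(13,31)$: here $w=18$, $k=29$, $t=2$, $p=2$, $q=2$, the auxiliary pure length is $20$, and no multiset of cycle lengths from $[9,18]$ sums to $20$ except two lengths giving $k'=16>\tfrac{w}{2}=9$. The paper escapes this by inverting your bias: its bulk gadgets $H_1,\ldots,H_{p'}$ are $\tfrac{w}{2}$-cycles decomposed \emph{entirely} into $5$-cycles (Lemma \ref{MixedCycles} with $a=0$ and $N=W\setminus V(H_i)$), plus one mixed gadget $C'$ with an auxiliary set $Q$, so that $k'=p'\tfrac{w}{2}+q'_5$ is large (the triangles number only about $\tfrac{w}{2}$) and the length quantum of the bulk gadgets is $\tfrac{w}{2}$ rather than $w$, which makes the arithmetic solvable for every residue $q$.

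A secondary but real problem is your use of Theorem \ref{Bipartite} for the $4$-/$6$-cycle part; the paper uses Theorem \ref{ChouFuHuang} instead, and this matters. The smaller side of the bipartition can be as small as $u-2p'-3=4$ (this occurs for $u=9$ with $p'=1$), where condition (B2) of Theorem \ref{Bipartite} already fails once two $6$-cycles are present, since $6+6>2\cdot 4+2$, whereas Theorem \ref{ChouFuHuang} only needs the sides to be at least $4$ and $6$. Your proposed remedy of biasing $k'$ toward $4$-cycles collides head-on with the cap $k'\leq\tfrac{w}{2}$ imposed by your gadget mechanism, so the two demands you flag as ``where the real work lies'' are not jointly satisfiable within your framework; resolving them requires restructuring the gadgets as the paper does, not just tuning parameters.
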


\begin{proof} Observe that $k$ is the maximum number of pairwise edge-disjoint $9$-cycles in $K_v-K_u$ that each contain exactly one pure edge. Also note that $t$ is even and so $t \leq 6$.
Let $w=v-u$, note that $w$ is even, and let $p$ and $q$ be the non-negative integers such that $k=(p+1)\frac{w}{2} +q$ and $q <\frac{w}{2}$. We will make use of the following fact, which follows directly from the definitions of $k$, $t$, $p$ and $q$, throughout this proof.

\begin{equation}\label{edges2}
uw= 8((p+1)\tfrac{w}{2} +q)+t
\end{equation}
Note that $p\geq 1$, for otherwise $uw\leq 4w+8q+t\leq 4w +8(\frac{w}{2}-1)+6<8w$ which contradicts $u\geq 9$.
Also note that $u\geq 4p+5$, because $u$ is odd and \eqref{edges2} implies $uw\geq 4(p+1)w$. From this, we can see that
\begin{equation}\label{sideBound}
u-2p-3 \geq 2p+2 \geq 4.
\end{equation}

Let $U=\{y_1,y_2,\ldots,y_u\}$ and $W=\{z_1,z_2,\ldots,z_w\}$ be disjoint sets of vertices. We will construct a decomposition of $K_{U\cup W}-K_U$ with the desired properties. Let $I$ be a $1$-factor with vertex set $W$. The proof divides into two cases depending on whether $t=0$ or $t>0$.

\noindent{\bf Case 1.} Suppose that $t>0$. Then $q>0$, for otherwise \eqref{edges2} implies  $w(u-4p-4)=t>0$ which contradicts the facts that $w\geq 10>t$ and $u-4p-4$ is an integer. Depending on the value of $q$, we define integers $p'$, $q'_3$, $q'_5$ and $q''$ so that
$(p'+1)\frac{w}{2}+q'_3+q'_5+q''=k$ according to the following table.

$$\begin{array}{|c|c|c|c|c|} \hline
	& p' & q'_3 & q'_5 & q''  \\ \hline
q\in\{1,2,3\} & p-1& 2q-2 & \frac{w}{2}+1-q  & 1  \\
q\in\{4,\ldots,\frac{w}{2}-1\} & p & 0 & q-1 & 1 \\ \hline
\end{array} $$

We will complete the proof by constructing a $(3^{w/2+q'_3+q''},4^{p'w/2+q'_5},5^{p'w/2+q'_5},6^{w/2+q'_3+q''},9^x)$-decomposition of $K_{U \cup W}-K_U$ in which each cycle of length less than $9$ contains at most one pure edge. We will construct this decomposition in such a way that the pure edges in the $3$-cycles and $5$-cycles of the decomposition form $p'$ $\frac{w}{2}$-cycles, a 1-factor with $w$ vertices, a $(q_3'+q_5')$-cycle, and a $q''$-path (recall that $(p'+1)\frac{w}{2}+q'_3+q'_5+q''=k$). The desired decomposition can be obtained as
$$(\D_1\setminus \{H_1,H_2,\ldots,H_{p'},C',C''\}) \cup (\D_2\setminus \{C^\dag\})\cup \D_3\cup \D_4\cup \D_5$$
where $\D_1$, $\D_2$, $\D_3$, $\D_4$ and $\D_5$ are given as follows.

\begin{itemize}
	\item
$\D_1$ is a $(9^{x-1},(\frac{w}{2})^{p'},q'_3+q'_5,9+q''-t)$-decomposition of $K_W-I$ that includes $p'$ $(\frac{w}{2})$-cycles $H_1,H_2,\ldots,H_{p'}$, a $(q'_3+q'_5)$-cycle $C'$, and a $(9+q''-t)$-cycle $C''$ containing the edge $z_1z_2$ and not containing the $\frac{t}{2}-1$ vertices $z_{3},z_{4},\ldots,z_{\frac{t}{2}+1}$ if $t>2$.
A $(9^{x-1},(\frac{w}{2})^{p'},q'_3+q'_5,9+q''-t)$-decomposition of $K_W-I$ exists by Theorem \ref{AlspachEven} because $9(x-1)+p'\frac{w}{2}+q'_3+q'_5+9+q''-t=9x+k-t-\frac{w}{2}=\binom{w}{2}-\frac{w}{2}$ (note that the definitions of $k$, $t$ and $x$ imply that $\binom{w}{2}=9x+k-t$). We can relabel the vertices of this decomposition to ensure that $C''$ has the specified properties because $(9+q''-t)+(\frac{t}{2}-1)=9-\frac{t}{2}< 10\leq w$.
	\item
$\D_2$ is a $(4^{p'w/2+q'_5},6^{w/2+q'_3+q''},t+2q'')$-decomposition of $K_{\{y_1,y_2,\ldots,y_{u-2p'-1}\}, W} - K_{\{y_{u-2p'-2},y_{u-2p'-1}\},V(C')\cup Q}$, where $Q$ is a subset of $W\setminus V(C')$ of size $q'_5$, that includes the $(t+2q'')$-cycle $C^\dag=(y_1,z_1,y_2,z_2,\ldots,y_{\frac{t}{2}+1},z_{\frac{t}{2}+1})$. Note that $|W\setminus V(C')| = w-q'_3-q'_5 \geq q'_5$ follows from our choice of $q'_3$ and $q'_5$.

We form $\D_2$ by first decomposing $K_{\{y_{u-2p'-2},y_{u-2p'-1}\},W\setminus(V(C')\cup Q)}$ into $\frac{w-q'_3-2q'_5}{2}$ $4$-cycles and then decomposing $K_{\{y_1,y_2,\ldots,y_{u-2p'-3}\}, W}$ into cycles of the remaining lengths. Note that we desire $p'\frac{w}{2}+q'_5$ $4$-cycles in $\D_2$ and that $p'\frac{w}{2}+q'_5\geq \frac{w-q'_3-2q'_5}{2}$ follows from our choice of $p'$, $q'_3$ and $q'_5$ (recall that $p \geq 1$).

A decomposition of $K_{\{y_1,y_2,\ldots,y_{u-2p'-3}\}, W}$ into cycles of the remaining lengths exists by Theorem \ref{ChouFuHuang} because $t+2q''\in\{4,6,8\}$. To see that the conditions of Theorem \ref{ChouFuHuang} hold, note that $u-2p'-3\geq u-2p-3\geq 4$ using \eqref{sideBound},  that $w\geq 10$, and that
\begin{align*}
  &4(p'\tfrac{w}{2}+q_5'-\tfrac{w-q'_3-2q'_5}{2})+6(\tfrac{w}{2}+q_3'+q'')+(t+2q'')\\
  ={}&8((p'+1)\tfrac{w}{2}+q_3'+q'_5+q'')+t-(2p'+3)w \\
  ={}&uw-(2p'+3)w \\
  ={}&(u-2p'-3)w.
\end{align*}
We can relabel the vertices of this decomposition to ensure that $C^\dag=(y_1,z_1,y_2,z_2,\ldots,y_{\frac{t}{2}+1},z_{\frac{t}{2}+1})$.
	\item
$\D_3$ is a $(3^{q'_3},5^{q'_5})$-decomposition of $K_{\{y_{u-2p'-2},y_{u-2p'-1}\}}^c\vee (C'\cup K_Q^c)$ which exists by Lemma~\ref{MixedCycles}.
	\item
$\D_4$ is a $(3^{w/2},5^{p'w/2})$-decomposition of $K_{\{y_{u-2p'},\ldots,y_{u}\},W} \cup I \cup H_1 \cup \cdots \cup H_{p'}$ which exists by applying Lemma~\ref{MixedCycles} (with $a=0$ and $n=\frac{w}{2}$) to
$K_{\{y_{u-2p'-1+i},y_{u-p'-1+i}\}}^c\vee (H_i\cup K_{W\setminus V(H_i)}^c)$ for $i\in\{1,2,\ldots,p'\}$ and taking the obvious decomposition of $K_{\{y_u\}}\vee I$.
	\item
$\D_5$ is the following $(3^{q''},9^1)$-decomposition of $C''\cup C^\dag$ (recall that $q''=1$). $$\{(z_1,y_2,z_2)\} \cup \{(C''-[z_1,z_2])\cup [z_{2},y_{3},z_{3},y_{4},\ldots,y_{\frac{t}{2}+1},z_{\frac{t}{2}+1},y_1,z_1] \}$$

\end{itemize}

\noindent{\bf Case 2.} Suppose that $t=0$. Then \eqref{edges2} reduces to $uw=8((p+1)\frac{w}{2} +q)$.
Depending on the value of $q$, we define integers $p'$, $q'$ and $q''$ so that
$(p'+1)\frac{w}{2}+q'+q''=k$ according to the following table.

$$\begin{array}{|c|c |c| c|} \hline
 & p' & q' & q''  \\ \hline
q=0 & p-1& \frac{w}{2}  & 0  \\
q\in\{1,2\} & p-1& \frac{w}{2}-3+q  & 3   \\
q\in\{3,4,\ldots,\frac{w}{2}-1\} & p & q & 0\\ \hline
\end{array} $$

We will complete the proof by constructing a $(3^{w/2+q''},4^{p'w/2+q'},5^{p'w/2+q'},6^{w/2+q''},9^x)$-decomposition of $K_{U \cup W}-K_U$ in which each cycle of length less than $9$ contains at most one pure edge. We will construct this decomposition in such a way that the pure edges in the $3$-cycles and $5$-cycles of the decomposition form $p'$ $\frac{w}{2}$-cycles, a 1-factor with $w$ vertices, a $q'$-cycle, and a $q''$-cycle if $q'' \neq 0$ (recall that $(p'+1)\frac{w}{2}+q'+q''=k$). The desired decomposition can be obtained as
$$(\D_1\setminus S_1) \cup (\D_2\setminus S_2)\cup \D_3\cup \D_4\cup \D_5$$
where $(S_1,S_2)=(\{H_1,H_2,\ldots,H_{p'},C',C''\},\{C^\dag\})$ if $q''\neq 0$, $(S_1,S_2)=(\{H_1,H_2,\ldots,H_{p'},C'\}, \emptyset)$ if $q''= 0$, and
$\D_1$, $\D_2$, $\D_3$, $\D_4$ and $\D_5$ are given as follows.
\begin{itemize}
	\item
$\D_1$ is a $(9^x,(\frac{w}{2})^{p'},q',q'')$-decomposition of $K_W-I$ that includes $p'$ $(\frac{w}{2})$-cycles $H_1,H_2,\ldots,H_{p'}$, a $q'$-cycle $C'$ and a $q''$-cycle $C''=(z_1,z_2,\ldots,z_{q''})$ if $q''\neq 0$. A $(9^x,(\frac{w}{2})^{p'},q',q'')$-decomposition of $K_W-I$ exists by Theorem \ref{AlspachEven} because $9x+p'\frac{w}{2}+q'+q''=9x+k-\frac{w}{2}=\binom{w}{2}-\frac{w}{2}$ (note that the definitions of $k$, $t$ and $x$ imply that $\binom{w}{2}=9x+k-t$). We can relabel the vertices of this decomposition to ensure that $C''=(z_1,z_2,\ldots,z_{q''})$ if $q''\neq 0$.
	\item	
$\D_2$ is a $(4^{p'w/2+q'},6^{w/2+q''},2q'')$-decomposition of $K_{\{y_1,y_2,\ldots,y_{u-2p'-1}\}, W} - K_{\{y_{u-2p'-2},y_{u-2p'-1}\},V(C')\cup Q}$, where $Q$ is a subset of $W\setminus V(C')$ of size $q'$, that includes the $(2q'')$-cycle $C^\dag=(y_1,z_1,y_2,z_2,\ldots,y_{q''},z_{q''})$ if $q''\neq 0$. Note that $|W\setminus V(C')| = w-q' \geq q'$ follows from our choice of $q'$.

We form $\D_2$ by first decomposing $K_{\{y_{u-2p'-2},y_{u-2p'-1}\},W\setminus(V(C')\cup Q)}$ into $\frac{w}{2}-q'$ $4$-cycles (if $q'<\frac{w}{2}$) and then decomposing $K_{\{y_1,y_2,\ldots,y_{u-2p'-3}\}, W}$ into cycles of the remaining lengths.
Note that we desire $p'\frac{w}{2}+q'$ $4$-cycles in $\D_2$ and that $p'\frac{w}{2}+q'\geq \frac{w}{2}-q'$ follows from our choice of $p'$, $q'$ and $q''$ (recall that $p \geq 1$ and $w \geq 10$).

A decomposition of $K_{\{y_1,y_2,\ldots,y_{u-2p'-3}\}, W}$ into cycles of the remaining lengths exists by Theorem \ref{ChouFuHuang} because $2q''\in \{0,6\}$.
The conditions of Theorem \ref{ChouFuHuang} can be shown to hold by a similar argument to that used in Case 1. We can relabel the vertices of this decomposition to ensure that $C^{\dag}=(y_1,z_1,y_2,z_2,\ldots,y_{q''},z_{q''})$ if $q''\neq 0$.

	\item
$\D_3$ is a $(5^{q'})$-decomposition of $K_{\{y_{u-2p'-2},y_{u-2p'-1}\}}^c\vee (C'\cup K_{Q}^c)$ which exists by Lemma~\ref{MixedCycles}.

	\item
$\D_4$ is a $(3^{w/2},5^{p'w/2})$-decomposition of $K_{\{y_{u-2p'},\ldots,y_{u}\},W} \cup I \cup H_1 \cup \cdots \cup H_{p'}$ which exists by applying Lemma~\ref{MixedCycles} (with $a=0$ and $n=\frac{w}{2}$) to
$K_{\{y_{u-2p'-1+i},y_{u-p'-1+i}\}}^c\vee (H_i\cup K_{W\setminus V(H_i)}^c)$ for $i\in\{1,2,\ldots,p'\}$ and taking the obvious decomposition of $K_{\{y_u\}}\vee I$.
	\item
$\D_5$ is the $(3^{q''})$-decomposition of $C''\cup C^\dag$ given by $\{(z_1,y_2,z_2), (z_2,y_3,z_3), \ldots ,(z_{q''-1},y_{q''},z_{q''}), (z_{q''},y_1,z_1)\}$ if $q'' \neq 0$, and $\D_5=\emptyset$ if $q''=0$. \qedhere
\end{itemize}
 \end{proof}

\section{Proof of main results}\label{ProofSec}

We first prove the following lemma, which does most of the work toward proving Theorem \ref{MainTheorem}.

\begin{lemma}\label{AmostEverythingLemma}
Let $m$, $u$ and $v$ be positive odd integers such that $m \geq 9$, $v-u \geq m+1$ and $(u,v)$ is $m$-admissible. If either $m \geq 17$ and $u \geq m-2$ or $m \in \{9,11,13,15\}$ and $u \geq m$, then there exists an $m$-cycle decomposition of $K_v-K_u$.
\end{lemma}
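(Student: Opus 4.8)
The plan is to start from one of the base decompositions constructed in Section~\ref{BaseDecompSec} and then repeatedly apply Lemma~\ref{JoiningLemma} to merge the short cycles into cycles of length $m$. When $m \geq 11$ I would invoke Lemma~\ref{BaseDecomposition} and when $m=9$ I would invoke Lemma~\ref{BaseDecomposition_9}; the hypotheses of Lemma~\ref{AmostEverythingLemma} are precisely what these two lemmas demand (note in particular that $v-u \geq m+1$ and the stated lower bounds on $u$ match their requirements). Either way we obtain a decomposition of $K_v-K_u$ consisting of $x$ cycles of length $m$ together with a collection of short cycles whose lengths sum to $mk$, where $k$ and $x$ are the integers defined in those lemmas, and in which each short cycle contains at most one pure edge.

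Next I would organise the short cycles into $k$ \emph{groups}, each of total length $m$ and each containing exactly one pure edge. This is possible because in each base decomposition the pure edges among the short cycles lie one per cycle on a set of exactly $k$ of them (the $3$-cycles when $m \geq 11$, and the $3$- and $5$-cycles when $m=9$, all produced via Lemma~\ref{MixedCycles} from pure cycles by attaching hole vertices), while every remaining short cycle is drawn from a complete bipartite graph between $U$ and $V\setminus U$ and so contains no pure edge at all; the counts $\binom{v}{2}-\binom{u}{2}=m(k+x)$ and $u(v-u)=(m-1)k+t$ force the number of pure edges on the short cycles to be exactly $k$. Pairing each pure short cycle with cross-only short cycles whose lengths complete it to a total of $m$ then yields the $k$ required groups (for instance $\{3\}\cup R_{m-3}$ when $m\geq 11$, and $\{3,6\}$ or $\{4,5\}$ when $m=9$). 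Since $v-u \geq m+1$, the definitions give $k \geq \tfrac{v-u}{2} \geq \tfrac{m+1}{2} \geq 5$, a fact that matters below.

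I would then treat the current decomposition as a packing whose reduced leave is the union of two chosen groups: this leave has exactly two pure edges and is a union of cycles whose lengths split into two sublists each summing to $m$, so (using $m \leq \min(|U|+2,|V|-|U|-1)$, which holds by the hypotheses) Lemma~\ref{JoiningLemma} applies and replaces the two groups by two cycles of length $m$, leaving a repacking of the remaining cycles. Because repacking preserves the length and the pure/cross edge count of every cycle, after each step the untouched short cycles still form groups of the required kind, so the process iterates. If $k$ is even this produces an $m$-cycle decomposition of $K_v-K_u$ after $k/2$ steps.

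The main point to get right is odd $k$, since Lemma~\ref{JoiningLemma} consumes groups two at a time. Here I would use the parity observation that, in $K_v-K_u$, every vertex of $U$ meets only cross edges, so each cycle contains an even number of cross edges and hence a cycle of odd length contains an odd number of pure edges. Applied to the two cycles of length $m$ that Lemma~\ref{JoiningLemma} outputs from a leave with exactly two pure edges, this forces each of them to contain exactly one pure edge. Thus, after merging $k-1$ groups into $k-1$ cycles of length $m$ via $(k-1)/2$ applications, I would pair the single leftover group, which has one pure edge, with one of these freshly built $m$-cycles, which also has one pure edge; their union is a leave with exactly two pure edges and lengths splitting as a sublist summing to $m$ together with the singleton $m$, so Lemma~\ref{JoiningLemma} applies once more and absorbs the last group. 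Since $k \geq 5$, such a freshly built $m$-cycle is always available. The conceptual content is just this base-plus-merge scheme together with the odd-cycle parity of pure edges; the bulk of the labour is the bookkeeping confirming that the base decompositions' short cycles group as claimed and that the size conditions of Lemma~\ref{JoiningLemma} hold throughout.
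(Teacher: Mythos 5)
Your proposal is correct and follows essentially the same route as the paper: obtain a base decomposition from Lemma~\ref{BaseDecomposition} or Lemma~\ref{BaseDecomposition_9}, organise the short cycles into $k$ groups each summing to $m$ with exactly one pure edge, and repeatedly apply Lemma~\ref{JoiningLemma}. The only (immaterial) difference is scheduling: the paper merges two groups once and thereafter always pairs one remaining group with an already-produced $m$-cycle containing exactly one pure edge, whereas you merge groups pairwise and invoke that same group-plus-$m$-cycle trick only at the end when $k$ is odd, justified by the same parity observation.
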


\begin{proof} Let $k$ and $x$ be the integers such that $k=\lfloor\frac{u(v-u)}{m-1}\rfloor$ and $m(x+k)=\binom{v}{2}-\binom{u}{2}$. From Lemmas \ref{BaseDecomposition} and \ref{BaseDecomposition_9}, it follows that there is an $(m^x,M_1,\ldots,M_k)$-decomposition $\mathcal{D}_0$ of $K_v-K_u$ in which each cycle of length less than $m$ contains at most one pure edge, where for $j \in \{1,\ldots,k\}$ $M_j$ is a list of at least two integers with sum $m$ that contains exactly one odd integer.

We will now construct a sequence $\mathcal{D}_1,\ldots,\mathcal{D}_{k-1}$ such that, for each $i \in \{1,\ldots,k-1\}$, $\mathcal{D}_i$ is an $(m^{x+i+1},M_1,\ldots,M_{k-i-1})$-decomposition of $K_v-K_u$ such that, with the exception of $x$ $m$-cycles, each cycle in $\mathcal{D}_i$ contains at most one pure edge. This will suffice to complete the proof because $\mathcal{D}_{k-1}$ will be an $m$-cycle decomposition of $K_v-K_u$.

Let $\mathcal{D}_1$ be the decomposition obtained by applying Lemma \ref{JoiningLemma} to the packing $\mathcal{P}_0=\mathcal{D}_0 \setminus \mathcal{C}_0$, where $\mathcal{C}_0$ is a set of cycles in $\mathcal{D}_0$ with lengths given by the list $M_{k-1},M_k$.
For each $i \in \{1,\ldots,k-2\}$, let $\mathcal{D}_{i+1}$ be the decomposition obtained by applying Lemma \ref{JoiningLemma} to the packing $\mathcal{P}_i=\mathcal{D}_i \setminus (\{C_i\} \cup \mathcal{C}_i)$, where $C_i$ is an $m$-cycle in $\mathcal{D}_i$ that contains exactly one pure edge and $\mathcal{C}_i$ is a set of cycles in $\mathcal{D}_i$ with lengths given by the list $M_{k-i-1}$. For each $i \in \{0,\ldots,k-2\}$, it is easy to verify that $\mathcal{D}_{i+1}$ has the required properties because $\mathcal{D}_{i}$ does and, by Lemma \ref{JoiningLemma}, $\mathcal{D}_{i+1}\setminus \{C',C''\}$ is a repacking of $\mathcal{P}_i$ for some distinct $m$-cycles $C',C'' \in \mathcal{D}_{i+1}$.
\end{proof}

The following lemma exploits the fact that two $m$-cycle decompositions of complete graphs with holes can be  ``nested'' to create another. It will be used to help deal with the remaining cases of Theorem \ref{MainTheorem} and in the proof of Corollary \ref{MissingCasesTheorem}.

\begin{lemma}\label{ReductionLemma}
Let $m \geq 9$ and $u$ be odd integers. If there exists an $m$-cycle
decomposition of $K_{u^{\star}}-K_u$ for some positive integer $u^{\star} \geq m$, then there exists an $m$-cycle decomposition of $K_v-K_u$ for each integer $v$ such that $v \geq u^{\star}+m+1$ and $(u,v)$ is $m$-admissible.
\end{lemma}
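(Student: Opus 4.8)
The plan is to nest the hypothesised decomposition inside a decomposition of a larger ``collar''. Fix sets $U \subseteq U^\star \subseteq V$ with $|U|=u$, $|U^\star|=u^\star$ and $|V|=v$. Then the edge sets of $K_{U^\star}-K_U$ and $K_V-K_{U^\star}$ partition the edge set of $K_V-K_U$, so $K_v-K_u$ is the edge-disjoint union of a copy of $K_{u^\star}-K_u$ and a copy of $K_v-K_{u^\star}$. The former has an $m$-cycle decomposition by hypothesis, so it suffices to produce an $m$-cycle decomposition of $K_v-K_{u^\star}$; the union of the two decompositions is then the desired $m$-cycle decomposition of $K_v-K_u$.

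To decompose $K_v-K_{u^\star}$ I would apply Lemma~\ref{AmostEverythingLemma} with hole size $u^\star$ in place of $u$. Its numerical hypotheses are readily checked: $m\geq 9$ is given, $v-u^\star\geq m+1$ follows from $v\geq u^\star+m+1$, and the case condition on the hole holds because $u^\star\geq m$ (so $u^\star\geq m-2$ when $m\geq 17$ and $u^\star\geq m$ when $m\in\{9,11,13,15\}$). The real work is to confirm that the pair $(u^\star,v)$ is $m$-admissible, that is, that it satisfies (N1)--(N4). Three of these are immediate. For (N1), $v$ is odd since $(u,v)$ is $m$-admissible, while $u^\star$ is odd by Lemma~\ref{NecConds}(N1) applied to the hypothesised decomposition of $K_{u^\star}-K_u$. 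For (N2), that same decomposition gives $\binom{u^\star}{2}\equiv\binom{u}{2}\mod{m}$ by Lemma~\ref{NecConds}(N2), and subtracting this from $\binom{v}{2}\equiv\binom{u}{2}\mod{m}$ yields $\binom{v}{2}\equiv\binom{u^\star}{2}\mod{m}$. Condition (N4) is automatic, since $v\geq u^\star+m+1$ forces $v\geq u^\star+m-1$, which the discussion after Lemma~\ref{NecConds} notes is sufficient.

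The one delicate point, and the step I expect to be the main obstacle, is verifying (N3), namely $v\geq\frac{u^\star(m+1)}{m-1}+1$, from the hypothesis $v\geq u^\star+m+1$. Here I would exploit the identity
$$\frac{u^\star(m+1)}{m-1}+1 = (u^\star+m+1)+\frac{2u^\star-m(m-1)}{m-1},$$
which shows that the bound $v\geq u^\star+m+1$ already yields (N3) exactly when $2u^\star\leq m(m-1)$, that is, when $u^\star\leq\binom{m}{2}$. This is precisely the regime in which the lemma will be invoked, where $u^\star$ is a small ``base'' order near $\nu_m(u)$ rather than an arbitrary large value, so that the collar $K_v-K_{u^\star}$ is genuinely $m$-admissible and Lemma~\ref{AmostEverythingLemma} applies. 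Once (N3) is secured, the proof closes at once by taking the union of the decomposition of $K_{u^\star}-K_u$ with the decomposition of $K_v-K_{u^\star}$ obtained from Lemma~\ref{AmostEverythingLemma}.
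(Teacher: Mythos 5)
Your argument is exactly the paper's: split $K_V-K_U$ into $K_{U^\star}-K_U$ and $K_V-K_{U^\star}$, use the hypothesised decomposition on the first piece, and get an $m$-cycle decomposition of the second from Lemma~\ref{AmostEverythingLemma} after checking that $(u^\star,v)$ is $m$-admissible. Your verifications of (N1), (N2) and (N4) are correct, and you have put your finger on precisely the point the paper glosses over: the published proof simply asserts that $(u^\star,v)$ is $m$-admissible ``because $(u,u^\star)$ and $(u,v)$ are'', with no computation behind it.

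However, your resolution of (N3) does not prove the lemma as stated. The statement allows any $u^\star\geq m$, and your (correct) identity shows that $v\geq u^\star+m+1$ yields (N3) for $(u^\star,v)$ only when $u^\star\leq\binom{m}{2}$; appealing to ``the regime in which the lemma will be invoked'' imports a restriction that is not among the hypotheses. The issue is not vacuous: take $m=9$, $u=27$, $u^\star=45$, $v=55$. Then $(27,45)$ and $(27,55)$ are $9$-admissible, $K_{45}-K_{27}$ has a $9$-cycle decomposition by Lemma~\ref{AmostEverythingLemma}, and $v=u^\star+m+1$, yet $(45,55)$ violates (N3) because $55<\frac{45\cdot 10}{8}+1$. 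So for large $u^\star$ the intermediate claim fails, and neither your argument nor the paper's establishes it (in this instance the conclusion survives only because Lemma~\ref{AmostEverythingLemma} applies directly to $(27,55)$). The clean fix is to add the hypothesis $u^\star\leq\binom{m}{2}$ to the lemma, which costs nothing: in the paper's two invocations one has $u^\star=2m+3$ and $u^\star=\nu_m(u)\leq u+2m<3m-2$ respectively, both below $\binom{m}{2}$ for $m\geq 9$. Apart from this defect, which you share with (and, to your credit, diagnosed more honestly than) the published proof, your argument matches the paper's.
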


\begin{proof} Let $v$ be an integer such that $v \geq u^{\star}+m+1$ and $(u,v)$ is
$m$-admissible. Let $U$, $U^{\star}$ and $V$ be sets such that $|U|=u$, $|U^{\star}|=u^{\star}$, $|V|=v$, and $U \subseteq U^{\star} \subseteq V$. By our hypotheses, there exists an $m$-cycle decomposition $\D_1$ of $K_{U^{\star}}-K_U$. Now note that $v-u^{\star}\geq m+1$ and $u^{\star}\geq m$ from our hypotheses, and that $(u^{\star},v)$ is $m$-admissible because $(u,u^{\star})$ and $(u,v)$ are $m$-admissible. Thus, by Lemma \ref{AmostEverythingLemma}, there is an $m$-cycle decomposition $\D_2$ of $K_V-K_{U^{\star}}$. Then $\D_1 \cup \D_2$ is an $m$-cycle decomposition of $K_V-K_{U}$.
\end{proof}

\begin{proof}[{\bf Proof of Theorem \ref{MainTheorem}.}]
We first note that Theorem~\ref{MainTheorem} is equivalent to showing that for $u \geq m-2$ and $v-u\geq m+1$, there exists an $m$-cycle decomposition of $K_v-K_u$ if and only if $(u,v)$ is $m$-admissible (note that $v-u \geq m+1$ guarantees that $(v-m)(v-1) \geq u(u-1)$). By Lemma \ref{NecConds}, if there exists an $m$-cycle decomposition of $K_v-K_u$, then $(u,v)$ is $m$-admissible. So it is sufficient to prove that for any $m$-admissible pair $(u,v)$ of integers such that $u \geq m-2$ and $v-u\geq m+1$, there exists an $m$-cycle decomposition of $K_v-K_u$. This is established for $m\leq 7$ (see \cite{BryHofRod96, BryRodSpi97, MenRos83}), so we can suppose that $m\geq 9$. By Lemma \ref{AmostEverythingLemma} there exists an $m$-cycle decomposition of $K_v-K_u$ if either $m\geq 17$ and $u \geq m-2$ or $m \in \{9,11,13,15\}$ and $u \geq m$, so we can further suppose that $m \in \{9,11,13,15\}$ and $u=m-2$.

Because $u=m-2$ and $m \in \{9,11,13,15\}$, it follows from $v-u \geq m+1$ that $v \geq 2m-1$. Furthermore, it is routine to check that (N1) and (N2) of Lemma \ref{NecConds} imply that $v \equiv 3 \mod{2m}$ or $v \equiv m-2 \mod{2m}$. Provided that there exists an $m$-cycle decomposition of $K_{2m+3}-K_{m-2}$, by Lemma \ref{ReductionLemma} there exists an $m$-cycle decomposition of $K_v-K_{m-2}$ if $v \geq 4m+3$. Thus, because $v \geq 2m-1$, it suffices to show that there is an $m$-cycle decomposition of $K_{2m+3}-K_{m-2}$ and of $K_{3m-2}-K_{m-2}$. For each $m \in \{9,11,13,15\}$, we have found these two decompositions using a computer program that implements basic cycle switching techniques to augment decompositions. These decompositions are included as supplementary material with electronic versions of this paper.
\end{proof}

\begin{proof}[{\bf Proof of Corollary \ref{MissingCasesTheorem}.}] Part (i) follows from
Lemma~\ref{ReductionLemma} (with $u^{\star}=\nu_m(u)$). Part (ii) follows from Theorem~\ref{MainTheorem}. For $u=1$ part (iii) follows from Theorem \ref{CSExist} and for $u=3$ it follows by removing a $3$-cycle from a decomposition of a complete graph into $m$-cycles and a single $3$-cycle which exists by the main result of \cite{BryHorPet14}. If $u>3$, then part (iii) follows from Theorem~\ref{MainTheorem}, noting that $u > \frac{(m-1)(m-2)}{2}$ implies that $\frac{u(m+1)}{m-1}+1>u+m-1$.
\end{proof}

Now we shall prove Theorem \ref{DWTheorem}. Theorem \ref{MainTheorem} can be shown to cover the exceptions to the following result from \cite{BryRod94_m}, and as a consequence we can completely solve the embedding problem for $m$-cycle systems in the case where $m$ is an odd prime power.

\begin{theorem}[\cite{BryRod94_m}]\label{DWExceptions}
Let $m$, $u$ and $v$ be positive integers such that $m$ is odd, $u<v$, and $u,v\equiv 1$ or $m\mod{2m}$. An $m$-cycle system of order $u$ can be embedded in an $m$-cycle system of order $v$ if and only if $v\geq \frac{(m+1)u}{m-1}+1$, except sometimes when $u\equiv v\equiv m\mod{2m}$ and $\frac{(m+1)u}{m-1}+1 \leq v \leq \frac{(m+1)u}{m-1} +2m$.
\end{theorem}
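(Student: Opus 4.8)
The first step is to recast the embedding problem as a hole-decomposition problem. Since $u \equiv 1$ or $m \pmod{2m}$, Theorem~\ref{CSExist} guarantees that an $m$-cycle system of order $u$ exists, and for any such system $\mathcal{A}$ on a vertex set $U \subseteq V$ with $|V| = v$, the cycles of an embedding that lie outside $\mathcal{A}$ form precisely an $m$-cycle decomposition of $K_V - K_U$, and conversely any such decomposition may be adjoined to $\mathcal{A}$. Because this decomposition is independent of the internal structure of $\mathcal{A}$, the embedding exists if and only if $K_v - K_u$ has an $m$-cycle decomposition. The congruence hypotheses immediately supply conditions (N1) and (N2) of Lemma~\ref{NecConds}, and the displayed inequality is exactly (N3); hence the necessity of $v \geq \frac{(m+1)u}{m-1}+1$ follows from Lemma~\ref{NecConds}, and it remains to establish sufficiency.

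For sufficiency I would dispatch almost all cases with Theorem~\ref{MainTheorem}. Inspecting the four residue pairs $(u \bmod 2m,\, v \bmod 2m)$, in every case other than $(1,m)$ one finds $v-u \equiv 0$ or $m+1 \pmod{2m}$, so $v-u \geq m+1$; moreover $u \equiv m \pmod{2m}$ forces $u \geq m$, while $u \equiv 1 \pmod{2m}$ with $u>1$ forces $u \geq 2m+1$. Thus whenever $u \geq m-2$ and $v-u \geq m+1$ the pair $(u,v)$ is $m$-admissible (conditions (N1)--(N3) hold by hypothesis and (N4) is implied by $v-u \geq m+1$), and Theorem~\ref{MainTheorem} yields the decomposition. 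The case $u=1$ is handled directly by Theorem~\ref{CSExist}, since $K_v - K_1 \cong K_v$. In particular every pair in the claimed exceptional window has $u \equiv v \equiv m \pmod{2m}$, whence $v-u \geq 2m \geq m+1$ and $u \geq m$, so Theorem~\ref{MainTheorem} applies there as well; the decomposition in fact always exists in that window, which is consistent with (indeed stronger than) the cautious ``except sometimes'' clause.

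The only configuration not covered above is the residual family with $u \equiv 1$, $v \equiv m \pmod{2m}$ and $v-u = m-1$. Here $v-u < m+1$, so Theorem~\ref{MainTheorem} does not apply, and the bound $v \geq \frac{(m+1)u}{m-1}+1$ rearranges to $u \leq \frac{(m-1)(m-2)}{2}$, so $u$ ranges over the finitely many values $1, 2m+1, 4m+1, \ldots$ in a bounded interval (a nonempty family only for $m \geq 7$). Note that $K_{u+m-1} - K_u = K_{m-1} \vee \overline{K_u}$, the $m-1$ pure vertices being complete among themselves and each joined to every hole vertex. Since $m$ is odd, no $m$-cycle is bipartite, so every cycle of a decomposition must contain at least one pure edge; this blocks any attempt to peel off a complete bipartite subgraph and is what makes the family delicate. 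A clean necessary count matches the bound exactly: the number of cycles is $N = \frac{\binom{m-1}{2} + (m-1)u}{m}$, and requiring each to carry a pure edge forces $N \leq \binom{m-1}{2}$, which is equivalent to $u \leq \frac{(m-1)(m-2)}{2}$.

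For this residual family I would construct the decompositions by a cyclic difference-family method: develop base $m$-cycles under $\mathbb{Z}_{m-1}$ acting on the pure vertices, with the hole vertices as fixed points, choosing the base cycles so that each pure difference class and each hole-vertex orbit of cross edges is covered the correct number of times. The bound above is precisely the inequality guaranteeing that the $\binom{m-1}{2}$ pure edges can be threaded through the $(m-1)u$ cross edges to form $m$-cycles. Producing these base cycles uniformly in $m$ and $u$ is the main obstacle, and it is exactly the regime where the analogous construction for $u \equiv v \equiv m \pmod{2m}$ and small $v$ historically leaves a gap, accounting for the exception clause; in the present residue class $(1,m)$ no exception is asserted, so this construction must be pushed through in full rather than left as a possible exception.
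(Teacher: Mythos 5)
This statement is imported verbatim from \cite{BryRod94_m}; the paper offers no proof of it, so your attempt stands or falls on its own. The framework you set up is sound: the reduction of the embedding problem to an $m$-cycle decomposition of $K_v-K_u$, the necessity via Lemma \ref{NecConds}, and the residue analysis of the four pairs $(u \bmod 2m, v \bmod 2m)$ are all correct. Your handling of $u\equiv v\equiv m \pmod{2m}$ via Theorem \ref{MainTheorem} (noting $u\geq m$ and $v-u\geq 2m$) is literally the argument the paper itself uses in Lemma \ref{DWprimes} to remove the exceptional window; this is anachronistic for a 1994 attribution but not circular within this paper, since the proof of Theorem \ref{MainTheorem} nowhere invokes Theorem \ref{DWExceptions}.

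The genuine gap is the residual family you isolate and then only sketch: $u\equiv 1$, $v\equiv m \pmod{2m}$ with $v-u=m-1$, i.e.\ $2m+1 \leq u \leq \frac{(m-1)(m-2)}{2}$ with $u \equiv 1 \pmod{2m}$, nonempty for every $m \geq 7$. The theorem asserts existence \emph{without exception} in this residue class, so this case must be fully proved, and it is precisely the hard constructive core of \cite{BryRod94_m} --- and precisely the case the present paper cannot reproduce with its own machinery ($v-u=m-1<m+1$ puts it outside Theorem \ref{MainTheorem}, inside the exceptional window of Corollary \ref{MissingCasesTheorem}(ii)), which is why the paper imports Theorem \ref{DWExceptions} for exactly these pairs in the proof of Theorem \ref{SmallMSolution}. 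Moreover, your proposed construction fails in its naive form: writing $u=2mk'+1$, the decomposition of $K_{u+m-1}-K_u$ requires $N=\frac{1}{m}\bigl(\binom{m-1}{2}+u(m-1)\bigr)=\frac{(m-1)(4k'+1)}{2}$ cycles, and since $4k'+1$ is odd, $m-1$ does not divide $N$; hence the base cycles cannot all generate full orbits under $\mathbb{Z}_{m-1}$. Any base cycle with nontrivial stabiliser must be fixed by the involution $x\mapsto x+\frac{m-1}{2}$ (a nontrivial rotation of an $m$-cycle has order dividing $\gcd(m,m-1)=1$, so the stabiliser element must act as a reflection), and a reflection of an odd cycle fixes exactly one vertex, which is forced to lie in the hole since the involution acts freely on the pure vertices. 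So the deferred construction demands specially structured short-orbit base cycles, not routine bookkeeping; acknowledging it as ``the main obstacle'' and pushing it off does not discharge it, and without it the theorem is unproved exactly where it has no escape clause.
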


\begin{lemma}\label{DWprimes}
Let $m$ be an odd prime power. For positive integers $u$ and $v$ with $u<v$, an $m$-cycle system of order $u$ can be embedded in an $m$-cycle system of order $v$ if and only if $u,v\equiv 1$ or $m \mod{2m}$ and $v\geq \frac{u(m+1)}{m-1}+1$.
\end{lemma}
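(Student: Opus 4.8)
The plan is to derive the result from Theorem~\ref{DWExceptions}, using Theorem~\ref{MainTheorem} to clear away its exceptional cases. The first thing I would establish is a purely arithmetic fact: when $m=p^a$ is an odd prime power, the hypothesis ``$u\equiv 1$ or $m \mod{2m}$'' is equivalent to ``$u$ is odd and $\binom{u}{2}\equiv 0\mod{m}$'', that is, to $u$ being a feasible order for an $m$-cycle system (and likewise for $v$). Since $m$ is odd, $m\mid\binom{u}{2}$ reduces to $m\mid u(u-1)$; because $\gcd(u,u-1)=1$ and $m$ is a prime power, this forces $p^a\mid u$ or $p^a\mid u-1$, so $u\equiv 0$ or $1 \mod{m}$, and requiring $u$ odd upgrades these to $u\equiv m$ or $1 \mod{2m}$.

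With this in hand, necessity is quick. If the embedding exists then $m$-cycle systems of orders $u$ and $v$ exist, so by Theorem~\ref{CSExist} both orders are feasible, giving $u,v\equiv 1$ or $m \mod{2m}$ by the observation above; and deleting the order-$u$ system from the order-$v$ system leaves an $m$-cycle decomposition of $K_v-K_u$, so $(\mathrm{N}3)$ of Lemma~\ref{NecConds} gives $v\geq\frac{u(m+1)}{m-1}+1$.

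For sufficiency I would invoke Theorem~\ref{DWExceptions}, which already delivers the embedding except possibly when $u\equiv v\equiv m\mod{2m}$ and $\frac{u(m+1)}{m-1}+1\leq v\leq\frac{u(m+1)}{m-1}+2m$. The crux is to notice that in this exceptional range the hypotheses of Theorem~\ref{MainTheorem} are automatically met: $u\equiv v\equiv m\mod{2m}$ with $u<v$ forces $v-u$ to be a positive multiple of $2m$, so $v-u\geq 2m\geq m+1$, while $u\geq m\geq m-2$. Moreover $(u,v)$ is $m$-admissible, since $(\mathrm{N}1)$ and $(\mathrm{N}3)$ hold by assumption, $(\mathrm{N}2)$ holds because $\binom{u}{2},\binom{v}{2}\equiv 0\mod{m}$, and $(\mathrm{N}4)$ follows from $v-u\geq m+1$. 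Hence Theorem~\ref{MainTheorem} supplies an $m$-cycle decomposition $\mathcal{D}$ of $K_v-K_u$; placing an $m$-cycle system $\mathcal{A}$ of order $u$ (which exists by Theorem~\ref{CSExist}) on the hole, the union $\mathcal{A}\cup\mathcal{D}$ is the desired $m$-cycle system of order $v$ containing $\mathcal{A}$.

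I expect the only real subtlety to be the arithmetic translation of the first paragraph, which is precisely where the prime-power hypothesis on $m$ is essential --- for general odd $m$ the congruence conditions and order-feasibility need not agree. The rest is bookkeeping: the one genuinely useful idea is that every exceptional pair left over by Theorem~\ref{DWExceptions} satisfies $v-u\geq 2m$, so that a single application of Theorem~\ref{MainTheorem} covers all of them at once.
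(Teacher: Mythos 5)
Your proposal is correct and follows essentially the same route as the paper: derive the congruence conditions from the prime-power divisibility of $\binom{u}{2}$ and $\binom{v}{2}$, use Theorem~\ref{DWExceptions} for the generic cases, and clear the exceptional cases (which all satisfy $u\geq m$ and $v-u\geq 2m$) with Theorem~\ref{MainTheorem}. The only cosmetic difference is that the paper sends the entire case $u\equiv v\equiv m\mod{2m}$ to Theorem~\ref{MainTheorem} rather than just the exceptional range of Theorem~\ref{DWExceptions}, which changes nothing.
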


\begin{proof}
Let $m=p^n$ for some odd prime $p$ and some integer $n\geq 1$. If there exists an $m$-cycle system of order $v$ containing a subsystem of order $u$ then $p^n$ divides $\binom{u}{2}=\frac{u(u-1)}{2}$ and $\binom{v}{2}=\frac{v(v-1)}{2}$. Since $p$ cannot divide both $u$ and $u-1$, $u\equiv 1$ or $m\mod{2m}$ and by a similar argument $v\equiv 1$ or $m\mod{2m}$.  Also note that $v\geq \frac{(m+1)u}{m-1}+1$ by Lemma \ref{NecConds}.

Conversely, suppose that $v\geq \frac{(m+1)u}{m-1}+1$ and $u,v\equiv 1$ or $m\mod{2m}$. If $u\equiv v\equiv m\mod{2m}$, then $u \geq m$, $v-u\geq 2m$ and the result follows by Theorem \ref{MainTheorem}. Otherwise either $u\equiv 1 \mod{2m}$ or $v\equiv 1 \mod{2m}$ and the result follows directly from Theorem \ref{DWExceptions}.
\end{proof}

\begin{proof}[{\bf Proof of Theorem \ref{DWTheorem}.}]
Part (i) follows directly from Theorem \ref{CSExist}, Corollary \ref{MissingCasesTheorem}(iii) and Lemma \ref{DWprimes}. Part (ii) follows from Theorem \ref{CSExist} and Corollary \ref{MissingCasesTheorem}(ii) (note that an $m$-cycle system of order one is trivially embedded in any $m$-cycle system and that any non-trivial $m$-cycle system has order at least $m$).
\end{proof}

Finally we shall prove Theorem \ref{SmallMSolution}.

\begin{proof}[{\bf Proof of Theorem \ref{SmallMSolution}.}]
By Corollary \ref{MissingCasesTheorem} it is sufficient to find $m$-cycle decompositions of $K_v-K_u$ when $(u,v)$ is $m$-admissible and either
\begin{itemize}
	\item
$u < m-2$ and $v \leq \nu_m(u)+m-1$; or
	\item
$m-2 \leq u \leq \frac{(m-1)(m-2)}{2}$ and $v\leq u+m-1$.
\end{itemize}
By Theorem \ref{DWExceptions} we also know that, for pairs $(u,v)$ such that $u\equiv 1 \mod{2m}$ and $v\equiv m\mod{2m}$, there exists an $m$-cycle decomposition of $K_v-K_u$ when $v\geq \frac{u(m+1)}{m-1}+1$. So simple calculation reveals that it suffices to find an $m$-cycle decomposition of $K_v-K_u$ for the values of $m$, $u$ and $v$ given in the following table.

\begin{center}
\begin{small}
\begin{tabular}{|c|p{15.5cm}|}
  \hline
  % after \\: \hline or \cline{col1-col2} \cline{col3-col4} ...
 $m$ & $(u,v)$ \\
 \hline
 $9$ & $(5, 11)$, $(5, 17)$, $(11, 17)$, $(17, 23)$ \\
\hline
 $11$ & $(5, 27)$, $(5, 29)$, $(7, 27)$, $(7, 29)$, $(13, 21)$, $(25, 31)$, $(35, 43)$ \\
\hline
 $13$ & $(5, 31)$, $(5, 35)$, $(7, 33)$, $(9, 31)$, $(9, 35)$, $(15, 25)$, $(29, 37)$, $(41, 51)$ \\
\hline
 $15$ & $(5, 35)$, $(5, 41)$, $(7, 19)$, $(7, 27)$, $(9, 19)$, $(9, 27)$, $(11, 35)$, $(11, 41)$, $(15, 25)$, $(17, 29)$, $(21, 31)$, $(27, 37)$, $(27, 39)$, $(33, 43)$, $(37, 49)$, $(39, 49)$, $(45, 55)$, $(47, 59)$, $(49, 57)$, $(51, 61)$, $(57, 67)$, $(57, 69)$, $(63, 73)$, $(67, 79)$, $(77, 89)$ \\
\hline
\end{tabular}
\end{small}
\end{center}

We have found the desired decomposition in each of these cases using a computer program that implements basic cycle switching techniques to augment decompositions. These decompositions are included as supplementary material with arXiv:1411.3785.
\end{proof}

\vspace{0.3cm} \noindent{\bf Acknowledgements}

The first author was supported by Australian Research Council grants DE120100040, DP120103067 and DP150100506. The second author was supported by an Australian Postgraduate Award.

\bibliographystyle{plain}
\bibliography{references}
\end{document}